\thanks{This work is part of the author's Ph.D.\ thesis at the University of Chicago.}
\newtheorem{theorem}{Theorem}[section]
\newtheorem{lemma}[theorem]{Lemma}
\newtheorem{proposition}[theorem]{Proposition}
\newtheorem{corollary}[theorem]{Corollary}
\theoremstyle{remark}
\newtheorem{observation}[theorem]{Remark}
\theoremstyle{definition}
\newtheorem{definition}{Definition}[section]
\newcommand{\set}{\mathbb}
\newcommand{\dl}{\nabla}
\renewcommand{\frak}{\mathfrak}
\newcommand{\mc}{\mathcal}
\newcommand{\be}{\begin{equation}}
\newcommand{\ee}{\end{equation}}
\newcommand{\bee}{\begin{align}}
\newcommand{\eee}{\end{align}}
\newcommand{\ba}{\begin{array}}
\newcommand{\ds}{\displaystyle}
\newcommand{\ea}{\end{array}}
\newcommand{\bpm}{\begin{pmatrix}}
\newcommand{\epm}{\end{pmatrix}}
\newcommand{\lb}{\label}
\DeclareMathOperator{\Ran}{Ran}
\DeclareMathOperator{\Rere}{Re}
\DeclareMathOperator{\Imim}{Im}
\newcommand{\ov}{\overline}
\newcommand{\dd}{{\,}{d}}
\newcommand{\R}{\mathbb R}
\newcommand{\C}{\mathbb C}
\renewcommand{\Re}{\Rere}
\renewcommand{\Im}{\Imim}
\newcommand{\les}{\lesssim}
\newcommand{\Dil}{\mathrm{Dil}}
\newcommand{\sol}{\mathrm{sol}}
\newcommand{\Sol}{\mathrm{Sol}}
\DeclareMathOperator{\Epsilon}{\Phi}
\title[Critical Centre-Stable Manifolds]
{A Critical Centre-Stable Manifold for Schr\"{o}dinger's Equation in Three Dimensions}
\author{Marius Beceanu}
\address{Rutgers Math.\ Dept., 110 Frelinghuysen Rd., Piscataway, NJ, 08854}
\email{mbeceanu@rci.rutgers.edu}
\begin{document}
\numberwithin{equation}{section}
\begin{abstract}
Consider the focusing $\dot H^{1/2}$-critical semilinear Schr\"{o}dinger equation in $\R^3$
\be
i \partial_t \psi + \Delta \psi + |\psi|^2 \psi = 0.
\lb{NLS}
\ee
It admits an eight-dimensional manifold of special solutions called ground state solitons.

We exhibit a codimension-one critical real-analytic manifold $\mc N$ of asymptotically stable solutions of~(\ref{NLS}) in a neighborhood of the soliton manifold. We then show that $\mc N$ is centre-stable, in the dynamical systems sense of Bates--Jones, and globally-in-time invariant.

Solutions in $\mc N$ are asymptotically stable and separate into two asymptotically free parts that decouple in the limit --- a soliton and radiation. Conversely, in a general setting, any solution that stays $\dot H^{1/2}$-close to the soliton manifold for all time is in $\mc N$.

The proof uses the method of modulation. 
New elements include a different linearization and an endpoint Strichartz estimate for the time-dependent linearized equation.

The proof also uses the fact that the linearized Hamiltonian has no nonzero real eigenvalues or resonances. This has recently been established in the case treated here --- of the focusing cubic NLS in $\R^3$ --- by the work of Marzuola--Simpson and Costin--Huang--Schlag.
\end{abstract}
\maketitle



\section{Introduction}
\subsection{Main result}
Consider the equation
$$
i \partial_t \psi + \Delta \psi + |\psi|^2\psi = 0.
$$
Positive solutions to this equation at fixed energy $-\alpha^2$ are of the form $e^{it\alpha^2} \phi(x, \alpha)$, where $\phi(x, \alpha) > 0$ are solutions of the stationary Schr\"{o}dinger equation
\be
-\Delta_x \phi(x, \alpha) + \alpha^2\phi(x, \alpha) = \phi^3(x, \alpha).
\lb{phi}
\ee
The manifold of ground state solitons, here denoted $\sol$, consists of all solutions $\phi(x, \alpha)$, subjected to translations, boost, rescaling, and gauge transformations:
\be\lb{0.2}
\sol: = \big\{w(p) = w(\alpha, \Gamma, v_k, D_k) \in \dot H^{1/2} \mid w(p) = w(\alpha, \Gamma, v_k, D_k) = e^{i(v\cdot x + \Gamma)} \phi(x-D, \alpha)\big\}.
\ee
$\sol$ is diffeomorphic to $\R^7 \times (\R / \set Z)$, which can be lifted to $\R^8$. 

Initial data $w(p) = w(\alpha, \Gamma, v_k, D_k)$ at $t=0$ on the manifold $\sol$ evolves under (\ref{NLS}) into
\be\lb{free_sol}
w(\alpha, \Gamma - t |v|^2 + \alpha^2 t, v_k, 2t v_k + D_k) = e^{i(\Gamma + v\cdot x - t|v|^2 + \alpha^2 t)} \phi(x-2tv-D, \alpha).
\ee

Letting the parameters of motion also progress along a time-dependent parameter path
$$
\pi(t) = \big(v_k(t), D_k(t), \alpha(t), \Gamma(t)\big)
$$
with $\|\dot \pi\|_{L^1_t \cap L^{\infty}_t} < \infty$, we define asymptotically free moving solitons $w_{\pi}(t)$ by
\be\begin{aligned}\lb{1.1}
w_{\pi}(t)(x) &= e^{i \theta(x, t)} \phi(x-y(t), \alpha(t)) \\
\theta(x, t) &\ds= v(t) \cdot x + \int_0^t(\alpha^2(s)-|v(s)|^2) \dd s + \Gamma(t)\\
y(t) &\ds= 2\int_0^t v(s) \dd s + D(t).
\end{aligned}
\ee
$w_{\pi}(t)$ moves on the manifold $\sol$ both under the action of (\ref{NLS}) and by changes in $\pi(t)$.
\begin{theorem}[Main result]\lb{theorem_1}
There exists a codimension-one real analytic manifold $\mc N \subset \dot H^{1/2}$, inside a neighborhood of $\sol$
$$
V(\sol) = \{\psi_0 \in \dot H^{1/2} \mid d_{\dot H^{1/2}}(\psi_0, \sol) := \min_{w \in \sol} \|\psi_0 - w\|_{\dot H^{1/2}} < \epsilon\},
$$
such that for any initial data $\psi(0) = \psi_0 \in \mc N$ the equation (\ref{NLS}) has a global solution $\psi(t)$ that exists on the interval $t \in (t_0, \infty)$, $t_0<0$.

$\mc N$ and solutions $\psi(t)$, for $\psi(0) = \psi_0 \in \mc N$, have the following properties:
\begin{list}{\labelitemi}{\leftmargin=1em}
\item[1.] $\psi(t) = r(t) + w_{\pi}(t)$ is asymptotically stable, in the sense that it is the sum of a moving soliton $w_{\pi}(t)$, parametrized by $\pi$ as in \eqref{1.1}, and a dispersive term $r$, such that
\be\begin{aligned}
\|\dot\pi\|_1 &\les \alpha(0) d_{\dot H^{1/2}} (\psi_0, \sol) \\
\|r\|_{L^{\infty}_t \dot H^{1/2}_x \cap L^2_t \dot W^{1/2, 6}_x} & \les d_{\dot H^{1/2}} (\psi_0, \sol).
\end{aligned}\ee
\item[2.] $\psi$, $r$, and $w_{\pi}$ depend real-analytically on the initial value $\psi(0) \in \mc N$.
\item[3.] The dispersive term $r$ scatters in $\dot H^{1/2}$: for initial data $r_0 \in \mc N$ there exists $\rho \in \dot H^{1/2}$ with $\|\rho\|_{\dot H^{1/2}} \les d_{\dot H^{1/2}}(\psi_0, \sol)$, such that
\be
r(t) = e^{it\Delta} \rho + o_{\dot H^{1/2}}(1).
\ee
\item[4.] $\psi(t) \in \mc N$ for all $t \in (-\epsilon, \infty)$, $\epsilon>0$, i.e.\ $\mc N$ is invariant under the nonlinear flow.
\item[5.] $\mc N$ is the centre-stable manifold of the equation \eqref{NLS}.
\item[6.] Every solution $\psi \in L^{\infty}_t \dot H^{1/2}_x$ to \eqref{NLS} such that $\sup_{t \geq 0} d_{\dot H^{1/2}}(\psi(t), \sol) <<1$ and $\sup_{t \geq 0} \inf_{\Gamma, D} \|\psi(t) - e^{i\Gamma+D\dl} \psi(0)\|_{\dot H^{1/2}}<~2 \|\phi\|_{\dot H^{1/2}}$ must belong to $\mc N$.
\end{list}
\end{theorem}


$\mc N$ is the centre-stable manifold for (\ref{NLS}) and is invariant under the time evolution.

A general condition under which results like Theorem \ref{theorem_1} hold is that the point spectrum of the linearized Hamiltonian comes from the symmetries of the equation; see Section \ref{sec_1.5}.

For a discussion of centre-stable manifolds, the reader is referred to Section \ref{Chapter_1.3}. For a definition of the norms involved in the statement of Theorem \ref{theorem_1}, see the Appendix.

A similar result holds for global backward-in-time solutions. The invariant manifold in that case is the complex conjugate, $\ov {\mc N}$. The intersection of $\mc N$ and $\ov {\mc N}$ is a codimension-two real-analytic manifold of global solutions $\psi(t)$ that exist for all $t \in \R$.

\subsection{Background and history of the problem}
From a physical point of view, the cubic focusing Schr\"{o}dinger equation in $\R^3$ (\ref{NLS}) describes, in a first approximation, the self-focusing of optical beams due to a nonlinear dependence of the refraction index on the field strength. As such, the equation was derived for the first time in 1965 in \cite{kelley} starting from one proposed by Chiao--Garmire--Townes in \cite{cgt}. The physical effect associated by Kelley to finite time blowup in (\ref{NLS}) is called ``anomalous Raman gain''.

(\ref{NLS}) can also serve as a simplified model for the Schr\"odinger map equation. The cubic nonlinearity also appears in the Gross--Pitaevskii equation, which describes Bose--Einstein condensates. It also arises in the mean field limit from the Hartree potential, also in other physical contexts.

In general, consider the semilinear focusing Schr\"odinger equation on $\R^d \times \R$
\be
i \partial_t \psi + \Delta \psi + |\psi|^p \psi = 0,\ \psi(0) \text{ given}.
\lb{NLS'}
\ee
It admits soliton solutions of the form $e^{it\alpha^2} \phi(x, \alpha)$, where
\be\lb{phi'}
-\Delta \phi + \alpha^2\phi = \phi^{p+1}.
\ee
Important invariant quantities for this equation include mass, defined for $L^2$ solutions,
\be
M[\psi] = \int_{\R^d} |\psi(x, t)|^2 \dd x,
\ee
momentum, which is $\dot H^{1/2}$-critical,
\be\lb{momentum}
P[\psi] = \int_{\R^d} i \dl \psi(x) \ov{\psi(x)} \dd x,
\ee
and energy, defined for $H^1$ solutions,
\be
E[\psi] = \int_{\R^d} \frac 1 2 |\dl \psi(x, t)|^2 - \frac 2 {p+2} |\psi(x, t)|^{p + 2} \dd x.
\ee

Equation (\ref{NLS'}) is invariant under the rescaling
\be
\psi(x, t) \mapsto \alpha^{2/p} \psi(\alpha x, \alpha^2 t).
\ee
For $s_c = d/2 - 2/p$, $\dot H^{s_c}$ has the same scaling. We interpret this as meaning that (\ref{NLS'}) is $\dot H^{s_c}$-critical, for $s_c = d/2 - 2/p$. Of particular interest are the $L^2$ mass-critical exponent $p = \frac 4 n$, the $\dot H^{1/2}$-critical exponent $p= \frac 4 {n-1}$, and the $\dot H^1$ energy-critical exponent $p = \frac 4 {n-2}$.

Except for this introductory discussion, we always assume that $n=3$ and $p = 2$, in which case (\ref{NLS'}) reduces to $\dot H^{1/2}$-critical (\ref{NLS}) and (\ref{phi'}) reduces to (\ref{phi}).

\subsection{Orbital and asymptotic stability}
The stability of soliton solutions (\ref{phi'}) of the Schr\"o\-din\-ger equation (\ref{NLS'}) under small perturbations has been extensively studied.

Orbital stability was proved, by the Lyapunov method, in the $L^2$-subcritical case by Cazenave--Lions \cite{cazenave} and Weinstein \cite{wein1}, \cite{wein2}.

Soffer--Weinstein \cite{soffer1}, \cite{soffer2} introduced the modulation method to the study of asymptotic stability. Further results concerning the stability of small solitons and $L^2$-subcritical solitons belong to Pillet--Wayne \cite{pillet}, Buslaev--Perelman \cite{buslaev1}, \cite{buslaev2}, \cite{buslaev3}, Cuccagna \cite{cuc}, \cite{cuca2}, Rod\-ni\-anski--Schlag--Soffer  \cite{rod2}, \cite{rod3}, Tsai--Yau \cite{tsayau1}, \cite{tsayau2}, \cite{tsayau3}, Gang--Sigal \cite{gansig}, Cuccagna--Mizumachi \cite{cucmiz}, and Kirr--Z\u{a}rnescu \cite{kiza}, \cite{kiza2}.

Grillakis--Shatah--Strauss \cite{gril1}, \cite{gril2} proved sharp results in a general setting for soliton stability for Hamiltonian evolution equations. When applied to (\ref{NLS'}), their method shows the dichotomy between the $L^2$-subcritical case, where the ground state soliton is stable, the $L^2$-critical case, where the ground state soliton is linearly unstable, and the $L^2$-supercritical cases, where the ground state soliton is exponentially unstable.

In the $L^2$-critical case $p=\frac 4 n$, the symmetry transformations of   (\ref{NLS'}) also include the pseudoconformal symmetry. Applying it to the soliton $\phi$ (\ref{phi'}), one obtains explicit finite-time blowup solutions that blow up at a rate of $t^{-1}$.

Solutions of soliton mass form a threshold for the $L^2$-critical equation (\ref{NLS'}), $p= \frac 4 n$. Weinstein \cite{wein} showed that $H^1$ solutions $\psi$ with $M[\psi] < M[\phi]$ exist globally in time. Merle \cite{merle} showed that all threshold blowup solutions, $M[\psi] = M[\phi]$, arise from transformations of the soliton and proved scattering for $\langle x \rangle^{-1} L^2 \cap H^1$ solutions with $M[\psi] \leq M[\phi]$.

Bourgain-Wang \cite{bowa} proved the existence of a codimension-one set of solutions that blow up at a $t^{-1/2}$ rate. Perelman \cite{perelman} proved the existence of a stable blowup rate of $t^{-1/2} \log \log t$ in a neighborhood of the soliton.

Schlag \cite{schlag} extended the method of modulations to the $L^2$-supercritical case. He proved the existence of a codimension-one Lipschitz manifold of $W^{1, 1} \cap H^1$ initial data that generate asymptotically stable solutions to (\ref{NLS}). Subsequent results in this direction include those of Buslaev--Perelman \cite{buslaev1}, Krieger--Schlag \cite{krisch1}, Cuccagna \cite{cuca2}, Beceanu \cite{bec}, Marzuola \cite{mar}, and Stanislavova--Stefanov \cite{stst}.

In the $L^2$-supercritical case, negative energy initial data in $\Sigma = \langle x\rangle^{-1} L^2 \cap H^1$ leads to finite time blowup, due to the virial identity (see Glassey \cite{glassey}). For a relaxation of this condition and a survey of existing results see Sulem--Sulem \cite{sulem} and Cazenave \cite{caz2}. Berestycki--Cazenave \cite{bercaz} showed that blow-up can occur for arbitrarily small perturbations of ground state solitons such as (\ref{phi'}). The blowup is self-similar, as shown by results of Merle--Raphael \cite{mer2} and Krieger--Schlag \cite{krisch2}.

A similar result was obtained in 2006 by Kenig--Merle \cite{kenig} for the energy-critical equation (\ref{NLS'}), $p=\frac 4 {n-2}$, for $n \geq 3$. Taking radial $\psi \in \dot H^1$ data with $E[\psi] < E[\phi]$, they showed the following dichotomy: if $\|\dl \psi(0)\|_2 < \|\dl \phi\|_2$, then $\phi$ exists globally and scatters, while if $\|\dl \psi(0)\|_2 > \|\dl \phi\|_2$ and $\psi(0) \in L^2$ then $\psi$ blows up in finite time. In this regime, the equality $\|\dl \psi(0)\|_2 = \|\dl \phi\|_2$ cannot occur. The behavior of solutions at the energy threshold, $E[\psi] = E[\phi]$, was then classified by Duyckaerts--Merle \cite{duymer}: the same two cases are present, together with three others.

Following this approach, Holmer--Roudenko \cite{holrou}, Duyckaerts--Holmer--Rou\-den\-ko \cite{duhoro}, and Duyckaerts--Roudenko \cite{duyrou} established corresponding results for the $\dot H^{1/2}$-critical equation (\ref{NLS}). Their main findings may be summarized as follows:
\begin{theorem}\lb{holmer}
Assume that $\psi$ is a solution of \eqref{NLS} in the region
\be
\psi \in B =\big\{\psi \in H^1 \mid M[\psi] E[\psi] - 2 P[\psi]^2 \leq M[\phi] E[\phi]\big\},
\ee
where $\phi>0$ is a soliton given by \eqref{phi}. 
Then one of the following holds:
\begin{enumerate}
\item If $M[\psi] \|\dl \psi\|_2^2 - 2 P[\psi]^2 < M[\phi] \|\dl \phi\|_2^2$, then $\psi$ exists globally and scatters or equals a special solution, $\phi_-$, up to symmetries: Galilean coordinate changes, scaling, complex phase change, or conjugation.
\item If $M[\psi] \|\dl \psi\|_2^2 - 2 P[\psi]^2 = M[\phi] \|\dl \phi\|_2^2$, then $\psi$ equals $e^{it} \phi(\cdot, 1)$ up to symmetries.
\item If $M[\psi] \|\dl \psi\|_2^2 - 2 P[\psi]^2 > M[\phi] \|\dl \phi\|_2^2$ and $\psi \in \langle x \rangle^{-1} L^2$ is radial, then $\psi$ blows up in finite time or must equal, up to symmetries, a special solution $\phi_+$.
\end{enumerate}
\end{theorem}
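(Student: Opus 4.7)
The plan is to follow the Kenig--Merle concentration-compactness and rigidity roadmap, adapted to the $\dot H^{1/2}$-critical setting, where $P[\psi]$ plays the role of an additional conserved quantity absent from the energy-critical case. First I would use Galilean invariance to reduce to $P[\psi]=0$; after this reduction the hypothesis becomes the scale-invariant mass--energy bound $M[\psi]E[\psi] \leq M[\phi]E[\phi]$, and the dichotomy is governed by whether $M[\psi]\|\dl\psi\|_2^2$ is below or above $M[\phi]\|\dl\phi\|_2^2$. The sharp Gagliardo--Nirenberg inequality
\be
\|f\|_4^4 \leq C_{GN} \|\dl f\|_2^3 \|f\|_2,
\ee
with $\phi$ as the extremizer, shows that both the sub-threshold and super-threshold regions are trapping for the flow, with a quantitative gap depending on the distance to the mass--energy threshold.

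For statement (1) I would establish scattering by contradiction. Local well-posedness in $\dot H^{1/2}$ and small-data scattering follow from Strichartz estimates in an appropriate $\dot H^{1/2}$-admissible norm, and an $\dot H^{1/2}$ linear profile decomposition in the spirit of Keraani, combined with a nonlinear long-time perturbation lemma, produces a \emph{critical element} $\psi_c$ whose orbit is precompact in $\dot H^{1/2}$ modulo the symmetry group generated by scaling, translation, and phase. The rigidity step then forces $\psi_c \equiv 0$, which contradicts non-scattering, unless $\psi_c$ follows a trajectory converging exponentially to the soliton; that trajectory is the special solution $\phi_-$, unique up to the symmetries. Rigidity itself is proved via a truncated virial identity: choosing a cutoff $\chi_R$ matched to the $\dot H^{1/2}$ compactness modulus $N(t)$, one shows that $\frac{d^2}{dt^2}\int \chi_R |\psi_c|^2$ is coercively bounded below by a positive multiple of the kinetic energy, contradicting boundedness of $\int \chi_R |\psi_c|^2$ unless $\psi_c$ is trivial or soliton-like.

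For statement (3), the radial $\langle x\rangle^{-1}L^2$ assumption makes Glassey's exact virial identity available; combined with the super-threshold variational bound it gives $\frac{d^2}{dt^2}\int|x|^2|\psi|^2 \leq -\delta < 0$, forcing finite-time blow-up unless $\psi$ lies on the stable manifold of the soliton. That stable manifold is one-dimensional and yields the companion special solution $\phi_+$ via a Lyapunov--Perron fixed-point construction around $e^{it}\phi$, using the spectral decomposition of the linearization at the soliton (the same operator studied in Section~\ref{sec_1.5} of the present paper). The threshold case (2) is then immediate from the equality case of the Gagliardo--Nirenberg inequality, which identifies any extremizer as a symmetry transform of $\phi$.

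The main obstacle, in my view, is the rigidity step. Unlike in the Kenig--Merle radial energy-critical problem, there is no a priori control on the spatial support of $\psi_c$, so the localized virial must be tuned to the $\dot H^{1/2}$ compactness scale $N(t)$, and the cross-terms controlled using the momentum normalization $P=0$, which kills an otherwise obstructing linear-in-$\dl$ boundary term. A secondary difficulty is the uniqueness of $\phi_\pm$ modulo symmetries, which requires a careful spectral analysis of the linearized Hamiltonian and control of the non-modulational exponential rates.
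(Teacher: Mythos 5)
This is a case where there is nothing in the paper to compare against: Theorem \ref{holmer} is not proved in this paper at all. It is an explicit quotation of the results of Holmer--Roudenko, Duyckaerts--Holmer--Roudenko, and Duyckaerts--Roudenko (the paragraph immediately preceding the statement says ``Their main findings may be summarized as follows''), and the paper then only discusses how that theorem relates to the centre-stable manifold of Theorem \ref{theorem_1}. So your task was really to reconstruct three substantial external papers, and your sketch does correctly identify their overall architecture: Galilean reduction to $P=0$, the sharp Gagliardo--Nirenberg trapping argument, Kenig--Merle concentration-compactness plus rigidity for scattering, Glassey's virial for blow-up, and a Lyapunov--Perron construction of the special solutions from the unstable/stable eigendirections $\pm i\sigma$ of the linearized Hamiltonian. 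The threshold case (2) is indeed essentially the equality case of Gagliardo--Nirenberg combined with uniqueness for the Cauchy problem.

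Two points in your outline are inaccurate enough to flag. First, you fold the appearance of $\phi_-$ into the rigidity step, as if the critical element could ``survive'' rigidity by converging to the soliton. That is not how the cited works are organized: strictly below the mass-energy threshold $M[\psi]E[\psi]-2P[\psi]^2<M[\phi]E[\phi]$ the rigidity theorem forces $\psi_c\equiv 0$ outright and scattering holds with no exceptional solution; the alternatives $\phi_\pm$ occur only \emph{at} the threshold $M[\psi]E[\psi]-2P[\psi]^2=M[\phi]E[\phi]$, and there the logic is reversed --- one first \emph{constructs} $\phi^\pm$ by a fixed point around $e^{it}\phi$ using the exponentially growing/decaying modes, and then proves (by modulation analysis, exponential convergence, and a uniqueness-of-backward-trajectory argument) that any non-scattering, non-blow-up threshold solution must coincide with one of them up to symmetries. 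Second, the concentration-compactness machinery in those papers is run in $H^1$, with criticality encoded through the scale-invariant products $M[\psi]E[\psi]$ and $M[\psi]\|\dl\psi\|_2^2$, not through a genuinely $\dot H^{1/2}$ linear profile decomposition as you propose; your variant is plausible but is not the argument that establishes the quoted theorem, and it would require re-proving the long-time perturbation theory at $\dot H^{1/2}$ regularity. As a roadmap your proposal is sound; as a proof it leaves every one of the hard steps (profile decomposition, rigidity, construction and uniqueness of $\phi^\pm$) unexecuted.
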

The special solutions $\phi_-$ and $\phi_+$ are defined by the following properties: $\phi_-(t)$ scatters  as $t \to -\infty$ and converges at an exponential rate to $e^{it\alpha^2} \phi(x, \alpha)$ as $t \to + \infty$, while $\phi_+(t)$ blows up in finite time for $t<0$ and converges exponentially fast to a soliton as $t \to + \infty$.

Theorem \ref{theorem_1} then shows that the boundary $\partial B$ 
is not a smooth manifold. In fact, in the neighborhood of the soliton manifold $\sol$, $B$ is contained between two $\dot H^{1/2}$ hypersurfaces, $\mc N$ and $\ov {\mc N}$, that meet transversally.

The contacts $B \cap \mc N$ and $B \cap \ov{\mc N}$ occur along nine-dimensional manifolds. These are precisely the special solutions $\phi_+(t)$ and $\phi_-(t)$, at large $t$, subject to the symmetry transformations.

This fact fits well with several other natural observations. Firstly, eliminating the soliton manifold $\sol$, an eight-dimensional set, from $B$, an infinite-dimensional set, divides $B$ into two disconnected components; this certainly could not happen if $B$ were smooth.

Secondly, consider the functional that defines $B$, $F[\psi] = M[\psi] E[\psi] - 2 P[\psi]^2$. Due to the extremizing property of the soliton $\phi$ in the Gagliardo--Ni\-ren\-berg--Sobolev inequality, the first differential of $F$ at $\phi$ is identically zero: for any $h \in H^1$,
\be
dF(h) := \lim_{\epsilon \to 0} \frac {F[\phi + \epsilon h] - F[\phi]}{\epsilon} = 0.
\ee
Therefore, the tangent cone to $B$ at $\phi$ is determined by the sign of second differential of $F$, which is a quadratic form of indefinite sign; hence the lack of smoothness.

Recently, Nakanishi--Schlag \cite{nasc} have obtained a complete description of solution dynamics for (\ref{NLS}) on a neighborhood of the ground state soliton, in radial $H^1$. For an overview of their method, applied to several equations, see the monography \cite{nasc2}.

They show all solutions fall into one of nine disjoint regions, of which four are open and five are finite codimension manifolds. The five finite codimension regions lie on $\mc N \cup \ov {\mc N}$. Two of the open regions lie in the set $B$ described above. The other two open regions have higher energy than the soliton and their characterization is completely new: they contain solutions that blow up at one end and completely disperse at the other.

Directly relevant to the discussion of Theorem \ref{theorem_1} are results of Schlag \cite{schlag}, \cite{bec}, Cuccagna \cite{cuca2}, and Kirr--Zarnescu \cite{kiza}.

In \cite{schlag}, Schlag extended the method of modulation to the $L^2$-supercritical case. He proved that locally around each ground-state soliton there exists a codimension-one Lipschitz submanifold of $H^{1}(\R^3) \cap W^{1,1}(\R^3)$ of initial data that lead to global $H^1 \cap W^{1, \infty}$ solutions to (\ref{NLS}). These solutions decompose into a moving soliton and a dispersive term.

\cite{bec} showed that, for initial data on a codimension-one local Lipschitz manifold in $\Sigma = \langle x \rangle^{-1} L^2\cap H^{1}$ , there exists a global solution to (\ref{NLS}) in the same space $\Sigma$. Furthermore, the manifold is identified as the centre-stable manifold for (\ref{NLS}) within $\Sigma$. In particular, the solution stays on the manifold for some positive finite time.

Cuccagna \cite{cuca2} studied asymptotically stable solutions for the mass-supercritical Schr\"odinger equation (\ref{NLS'}) in $\R \times \R$:
\be
iu_t+u_{xx}+ |u|^p u=0,\ 5<p<\infty.
\ee
$p=5$ is the $L^2$-critical exponent in $\R$, while every exponent is $H^1$-subcritical. Starting from even $H^1$ initial data, Cuccagna obtained a set of stable solutions, from a codimension-one set near the manifold, without manifold structure.


Unlike previous results, Theorem \ref{theorem_1} holds in a critical space, $\dot H^{1/2}$, for equation (\ref{NLS}). It was not known before whether asymptotically stable manifolds exist in the critical norm or are epiphenomena related to using supercritical norms in the study of the equation. Theorem \ref{theorem_1} settles this point.

Working in a critical space also leads to several specific improvements. Firstly, we identify the asymptotically stable manifold $\mc N$ as a centre-stable manifold in the sense of \cite{bates}, with a simpler proof than in \cite{bec}. Secondly, the critical norm of the solution, as a rule, stays bounded for all time --- unlike supercritical norms, which may grow polynomially or exponentially. Thus we prove that $\mc N$ is globally-in-time invariant: solutions starting on $\mc N$ exist globally and remain on the manifold $\mc N$, for all positive time.

Finally, the fact that the manifold $\mc N$ is real-analytic raises the possibility of finding its analytic continuation beyond the neighborhood $V(\mc S)$ of the soliton manifold. It becomes interesting to identify the global object that corresponds to the local centre-stable manifold.

\subsection{Centre-stable manifolds}\lb{Chapter_1.3}
The notion of invariant (stable, centre-stable, or unstable) manifolds was introduced in the study of finite-dimensional and discrete-time dynamical systems; see Kelley \cite{alkelley} and Smale \cite{smale}.

Invariant manifolds were then used in the study of dissipative equations; see, among others, the work of Chafee--Infante \cite{chin} and Henry's monograph \cite{henry} regardining the semilinear heat equation and Keller \cite{keller} concerning the damped wave equation.

Bates--Jones \cite{bates} proved for a large class of semilinear equations that the space of solutions locally decomposes into an unstable and a centre-stable manifold. Their result is as follows. Consider a Banach space $X$ and the semilinear equation
\be\lb{117}
u_t = A u + f(u),
\ee
under the assumptions
\begin{enumerate}
\item[H1] $A:X \to X$ is a closed, densely defined linear operator that generates a $C_0$ group.
\item[H2]\lb{H2} The spectrum of $A$, $\sigma(A) = \sigma_s(A) \cup \sigma_c(A) \cup \sigma_u(A)$, decomposes into left half-plane (stable), imaginary (centre), and right half-plane (unstable) components. The stable and unstable components, $\sigma_s(A)$ and $\sigma_u(A)$, are bounded.
\item[H3]\lb{H3} The nonlinearity $f$ is locally Lipschitz, $f(0) = 0$, and for every $\epsilon>0$ there exists a neighborhood of $0 \in X$ on which $\|f(x) - f(y)\| \leq \epsilon \|x-y\|$.
\end{enumerate}
Let $X^u$, $X^c$, and $X^s$ be the $A$-invariant subspaces corresponding to $\sigma_u$, $\sigma_c$, and $\sigma_s$ and let $S^c(t)$ be the evolution generated by $A$ on $X^c$. \cite{bates} further assume that
\begin{enumerate}
\item[C1-2] $\dim X^u$ and $\dim X^s$ are finite.
\item[C3]\lb{C3} $S^c$ has subexponential growth: $\forall \rho>0$ $\exists M>0$ such that $\|S^c(t)\| \leq M e^{\rho|t|}$.
\end{enumerate}

Let $\Upsilon$ be the flow on $X$ generated by (\ref{117}). $\mc N \subset U$ is called $t$-invariant if $\Upsilon(s)v \in U$ for all $s \in [0, t]$ implies that $\Upsilon(s)v \in \mc N$ for $s \in [0, t]$.

\begin{definition}\lb{unstable}
Let the \emph{unstable} manifold $W^u \subset U$ be the set of solutions that remain in $U$ for all $t<0$ and decay exponentially as $t \to -\infty$:
$$
W^u = \{u \in U \mid \forall t \leq 0\ \Upsilon(t) u \in U,\ \exists C_1>0\ \forall t \leq 0\ \|\Upsilon(t) u\|_X \les e^{C_1 t} \}.
$$
\end{definition}
Also consider the canonical direct sum spectral projection $\pi^{cs}$ onto the centre-stable part of the spectrum: $\pi^{cs}(X) = X^c \oplus X^s$.
\begin{definition}
A \emph{centre-stable} manifold $\mc N \subset U$ is a Lipschitz manifold (i.e.\ parametrized by Lipschitz maps) such that $\mc N$ is $t$-invariant relative to $U$, $\pi^{cs}(\mc N)$ contains a neighborhood of $0$ in $X^c \oplus X^s$, and $\mc N \cap W^u = \{0\}$.
\lb{centr}
\end{definition}

The conclusion of \cite{bates} is then
\begin{theorem}
Under assumptions H1-H3 and C1-C3, locally around $0$, there exist an unstable Lipschitz manifold $W^u$ tangent to $X^u$ at $0$ and a centre-stable manifold $W^{cs}$ tangent to $X^{cs}$ at $0$.
\lb{t1}
\end{theorem}

Gesztesy--Jones--Latushkin--Stanislavova \cite{ges} 
established a spectral mapping theorem for the semigroup obtained by linearizing the equation
\be\lb{123}
iu_t - \Delta u - f(x, |u|^2)u - \beta u = 0
\ee
around an exponentially decreasing standing wave solution. In particular, the spectral condition H2 then implies the semigroup norm estimate C3.

By \cite{ges}, all the conditions of \cite{bates} are met for (\ref{eq_2188}), leading to the existence of a centre-stable manifold $\mc N_{BaJo}$ in this setting.

Indeed, in the Banach algebra $H^s \subset L^{\infty}$, $s>3/2$, the Hamiltonian $H$ is a closed, densely defined operator with the required spectral properties H1--H3 and C1--C3. In particular, the nonlinearity $N(Z_1, W(\pi_0(t)))$ has the Lipschitz property with arbitrarily small constant:
$$
\|N(Z_1, W(\pi_0)) - N(Z_2, W(\pi_0))\|_{H^s} \les \max(\|Z_1\|_{H^s}, \|Z_2\|_{H^s}) \|Z_1-Z_2\|_{H^s}.
$$
This shows that property H3 holds in $H^s$, for $s>3/2$. Thus, there exists a local centre-stable manifold for (\ref{123}) in $H^s$, $s>3/2$.

The global existence of solutions on the centre-stable manifold is not addressed in \cite{bates} and \cite{ges}. The $t$-invariance property of \cite{bates} means that a solution starting on the centre-stable manifold $\mc N$ remains there for as long as it stays small in norm. However, once a solution on $\mc N$ leaves the specified neighborhood $U$, its behavior can no longer be known by this method and its existence is not guaranteed any longer.

On the other hand, knowing, for reasons specific to the equation, that solutions on $\mc N$ remain in a small neighborhood of $0$ for all time implies the orbital or asymptotic stability of solutions on $\mc N$.

Several existing results can be interpreted in light of this theory. Chafee-Infante's \cite{chin}, Henry's \cite{henry} or Keller's \cite{keller} results required that the equation should be dissipative or contain a damping term, but held globally in time. Then, the stable manifold has finite codimension and the centre and stable manifolds are finite-dimensional.

Schlag \cite{schlag}, in the absence of a damping term, proved a global asymptotic stability result, but the manifold was not time-invariant, because it was defined in $W^{1, 1} \cap H^1$.

The centre-stable manifold of \cite{bec} exists in the $\Sigma = H^1 \cap \langle x \rangle^{-1} L^2$ norm, which grows linearly with $t$ for Schr\"{o}dinger's equation (\ref{NLS}). More generally, the same method works in $\Sigma^s = \dot H^s \cap \langle x \rangle^{-s} L^s$, $s \in (3/4, 1]$, whose norm grows at a rate of $\langle t \rangle^s$. The centre-stable manifold is locally-in-time invariant in $\Sigma^s$, but solutions may leave the manifold after finite time because of the $\Sigma^s$ norm growth.

On the other hand, the critical $\dot H^{1/2}$ norm of solutions does not grow with time, so the manifold constructed in $\dot H^{1/2}$ is globally-in-time invariant. This paper identifies the critical centre-stable manifold $\mc N$ for (\ref{NLS}) in $\dot H^{1/2}$ and shows that solutions starting on $\mc N$ remain on $\mc N$ for all time.

It is important to note that $\dot H^{1/2}$ is not an algebra and that the conditions of Theorem \ref{t1} of \cite{bates} are not met in $\dot H^{1/2}$. Indeed, if $\psi \in \dot H^{1/2}$, it does not follow that $|\psi|^2 \psi \in \dot H^{1/2}$, much less that this nonlinearity is Lipschitz continuous, which is Condition H3 of Theorem \ref{t1}.

However, even though Condition H3 fails in $\dot H^{1/2}$, the conclusion of \cite{bates} --- the existence of a centre-stable manifold $\mc N$ --- still holds.

\subsection{Setting and notations}\lb{Chapter_1.4}

Equation (\ref{NLS}) admits periodic solutions $e^{it\alpha^2} \phi(x, \alpha)$, where
\be\lb{derrick-pohozaev}
\phi = \phi(x, \alpha) = \alpha \phi(\alpha x, 1)
\ee
is a solution of the semilinear elliptic equation (\ref{phi})
$$
-\Delta \phi + \alpha^2\phi = \phi^3.
$$
We consider positive $L^2$ solutions of (\ref{phi}), called ground states. Ground states are unique up to translation, radially symmetric, smooth, and exponentially decreasing. Berestycki--Lions proved the existence of ground states in \cite{bere} and showed they are infinitely differentiable and exponentially decaying. In a more general context, (\ref{derrick-pohozaev}) becomes the Derrick--Pohozaev identity, see \cite{bere}.

Uniqueness of ground states was established by Coffman \cite{coffman} for cubic and Kwong \cite{kwong} and McLeod--Serrin \cite{mcl} for more general nonlinearities.

Equation (\ref{NLS}) is invariant under its symmetry transformations, which are translations, boost, rescaling, and gauge transformations.  Since $\psi$ is complex-valued, gauge transformations have the form $f \mapsto e^{i\Gamma} f$, $\Gamma \in \R$:
\be
\frak g(t)\, f(x, t) := e^{i(\Gamma+v \cdot x-t|v|^2)} \alpha f(\alpha (x-2tv-D), \alpha^2 t).
\lb{coord}
\ee
When $\psi(t) \in H^{1/2}$ is a solution to (\ref{NLS}), so is $\frak g(t) \psi(t)$. However, boost transformations do not preserve $\dot H^{1/2}$.

Applying symmetry transformations to the ground state soliton $f(x, t) = e^{it} \phi(x, 1)$, we obtain an eight-parameter family (\ref{free_sol}) of solutions to (\ref{NLS}):
$$
\frak g(t)\, e^{it} \phi(x, 1) = e^{i(\Gamma + v \cdot x - t|v|^2 + \alpha^2 t)} \phi(x-2t v- D, \alpha).
$$
Such solutions are called solitons --- or standing waves if they have no boost, i.e.\ $v=0$.


We find solutions $\Psi = \bpm \psi \\ \ov \psi \epm$ to (\ref{NLS}) that approach $\sol$ asymptotically as $t$ goes to infinity. Such solutions have the form
\be
\Psi = W_{\pi}(x, t) + R(x, t) = \bpm w_{\pi}(x, t) \\ \ov w_{\pi}(x, t) \epm + \bpm r(x, t) \\ \ov r(x, t) \epm,
\ee
where $W_{\pi} = \bpm w_{\pi} \\ \ov w_{\pi} \epm$ is a moving soliton and $R=\bpm r \\ \ov r \epm$ is a small correction term that disperses like the solution of the free Schr\"{o}dinger equation as $t \to +\infty$.



We parametrize the moving soliton $w_{\pi}(t)$ by setting, following (\ref{1.1}),
$$\begin{aligned}
w_{\pi}(t) & = e^{i \theta(x, t)} \phi\big(x - y(t), \alpha(t)\big) \\
& \textstyle = e^{i(\Gamma(t) + \int_0^t (\alpha^2(s) - |v(s)|^2) \dd s + v(t) x)} \phi(x - 2\int_0^t v(s) \dd s - D(t), \alpha(t)).
\end{aligned}$$
The parameters of this formula,
\be
\alpha(t),\ \Gamma(t),\ v(t) = \big(v_1(t), v_2(t), v_3(t)), \text{ and } D(t) = (D_1(t), D_2(t), D_3(t)\big),
\ee
are called \emph{modulation parameters} and $\pi(t) = (\alpha(t), \Gamma(t), v(t), D(t))$ is called the \emph{parameter path} or \emph{modulation path}. For each $t$, $\pi(t) \in \R^8$ contains eight parameters, since $v$, $D \in \R^3$.

Due to the nonlinear interaction between $w_{\pi}$ and $r$, the modulation parameters are not constant in general; they are time-dependent. However, in the course of the proof they are not allowed to vary too much. A minimal condition, which we impose henceforth, is that $\pi$ has bounded variation and
\be
\dot \alpha, \dot \Gamma, \dot v_k, \dot D_k \in L^1_t
\ee
are small in this norm. We assume no stronger rate of decay. This implies that the modulation parameters converge as $t \to \infty$ to final values, but at no particular rate, and that their range is contained within arbitrarily small intervals.

\subsection{Proof outline}\lb{sec_1.5}
Linearizing the equation (\ref{NLS}) around a moving soliton $w_{\pi^0}(t)$ driven by a modulation path $\pi^0(t)$ as in (\ref{1.1}) leads to the time-dependent Hamiltonian
$$\begin{aligned}
H_{\pi^0}(t) &= \bpm \Delta + 2|w_{\pi^0}(t)|^2 & (w_{\pi^0}(t))^2 \\ -(\ov w_{\pi^0}(t))^2 & -\Delta - 2|w_{\pi^0}(t)|^2 \epm.
\end{aligned}$$
By Lemma \ref{lemma1.1}, we reduce the study of $i\partial_t + H_{\pi^0}(t)$ to that of $i \partial_t + H$, where
\be\lb{1.29}
H = \bpm \Delta - 1 + 2\phi^2(\cdot, 1) & \phi^2(\cdot, 1) \\ -\phi^2(\cdot, 1) & -\Delta + 1 - 2\phi^2(\cdot, 1) \epm.
\ee

The spectrum of $H$ is completely known. Marzuola--Simpson gave a computer-assisted proof  in \cite{marsim} to the fact that $\sigma_{ac}(H) = (-\infty, -1] \cup [1, \infty)$ contains no embedded eigenvalues. Costin--Huang--Schlag \cite{cohusc} proved the spectral gap property analytically. In Section \ref{spectru} we give a full description of the spectral properties of $H$.

The proof of Theorem \ref{theorem_1} is based on a fixed point principle. We solve the linearized equation (\ref{ec_liniara}), jointly with the linearized modulation equations, in the space $X$ (\ref{X}). Considering the solution $(R, \pi)$ as a function $\Epsilon$ of the auxiliary variables $(R^0, \pi^0)$, we show that $\Epsilon$ is a contraction for fixed initial data.

We prove the stability of a small ball in $X$ under $\Epsilon$, Proposition \ref{prop9}, then prove that $\Epsilon$ is a contraction on this ball in the space $Y$, Lemma \ref{lemma_5}. This is enough to conclude the existence of a fixed point for $\Epsilon$, in Proposition \ref{prop27}, which is an asymptotically stable solution of (\ref{NLS}).

At this point, the solution size $\|R\|_{L^{\infty}_t \dot H^{1/2}_x \cap L^2_t \dot W^{1/2, 6}_x} + \|\dot \pi\|_{L^1_t \cap L^{\infty}_t}$ depends on that of $\|R(0)\|_{\dot H^{1/2}}$. Lemma \ref{lemma_10} implies that $\|R(0)\|_{\dot H^{1/2}}$ is comparable to the distance $d_{\dot H^{1/2}}(\psi(0), \sol)$.

Proposition \ref{prop_9} establishes the continuous dependence of $\Psi$ on initial data. The scattering of the solution is shown in Section \ref{sect_sc}.

We define the manifold $\mc N$ in Definition \ref{def3} and prove its local uniqueness in Proposition \ref{prop_2.13} and its invariance by Corollary \ref{stable}. $\mc N$ and its embedding into $\dot H^{1/2}$, as well as solutions' dependence on initial data, are real-analytic by Proposition \ref{prop_an}.

Finally, in Theorem \ref{batjon} we prove that $\mc N$ is a centre-stable manifold for (\ref{NLS}) in the sense of Definition \ref{centr}, see Bates--Jones \cite{bates}.

\subsection{Linear results}
In establishing the main nonlinear result, Theorem \ref{theorem_1}, a crucial ingredient is an endpoint Strichartz estimate for the time-dependent linearized Schr\"{o}dinger equation
\be\lb{1.30}
i \partial_t Z - i v(t) \dl Z + A(t) \sigma_3 Z + H Z = F,\ Z(0) \text{ given}.
\ee
Here $Z = \bpm z_1 \\ z_2 \epm$ and, in general,
\be
H = H_0 + V = \bpm \Delta-\mu & 0 \\ 0 & -\Delta+\mu \epm + \bpm W_1 & W_2 \\ -W_2 & -W_1 \epm,\ \mu > 0.
\lb{1.17}
\ee
$W_1$ and $W_2$ are real-valued and of Schwartz class.

The Hamiltonian of (\ref{1.30}) has the form
$$
- i v(t) \dl + A(t) \sigma_3 + H,
$$
hence is nonselfadjoint and time-dependent. Since the potential $V$ appears by linearizing (\ref{NLS}) around a ground state soliton $\phi$, see (\ref{ec_liniara}), $V$ is smooth and exponentially decaying. In fact, the real analyticity of $V$ gets used in proving the analiticity of the soliton manifold $\sol$ and then of the centre-stable manifold $\mc N$.

In this paper we prove endpoint Strichartz estimates for (\ref{1.30}), of the form
\be
\|Z\|_{L^2_t \dot W^{1/2, 6}_x \cap L^{\infty}_t \dot H^{1/2}_x} \les \big(\|Z(0)\|_2 + \|F\|_{L^2_t \dot W^{1/2, 6/5}_x + L^1_t \dot H^{1/2}_x}\big).
\ee

Keel--Tao \cite{tao} proved endpoint Strichartz estimates for free Schr\"{o}dinger and wave equations and, more generally, showed that certain kernel decay bounds, together with $L^2$-boundedness of the evolution, imply endpoint Strichartz estimates.

In the nonselfadjoint case (\ref{1.30}), these dispersive estimates have to be proved anew. They do not follow in the same manner as or from the selfadjoint case, where, for example, the unitarity of the time evolution immediately implies the $L^2$ boundedness.

In \cite{schlag}, Schlag proved $L^1 \to L^{\infty}$ dispersive estimates for the Schr\"{o}dinger equation with a nonselfadjoint Hamiltonian, as well as non-endpoint Strichartz estimates. Erdo\^{g}an and Schlag \cite{erdsch2} proved $L^2$ bounds for the evolution as well. \cite{bec} proved endpoint Strichartz estimates in the nonselfadjoint case following the method of Keel--Tao. Finally, Cuccagna--Mizumatchi \cite{cucmiz} retrieved all previous results as a simple consequence of the boundedness of wave operators in the nonselfadjoint case.

A further complication appears in (\ref{1.30}) from the time-dependent Hamiltonian terms $A(t) \sigma_3$ and $iv(t) \dl$. 
Here $Z$ is the solution to (\ref{1.30}), $A(t) = \alpha^2(t) - \alpha^2(\infty)$ is the scale-dependent oscillation frequency, and $v(t)-v(\infty)$ is the translation velocity.

Instead of treating $A(t) \sigma_3$ and $iv(t) \dl$ as inhomogenous terms, we make them part of the time-dependent Hamiltonian for which we prove the endpoint Strichartz estimates.

This technique leads to the following Strichartz estimates, which improve upon and are distinct from previous estimates:
\begin{theorem}\lb{theorem_1.4}
Consider equation (\ref{1.30}), for $H = H_0 + V$ as in (\ref{1.17}) and a potential $V$ not necessarily real-valued:
\be\nonumber
i \partial_t Z - i v(t) \dl Z + A(t) \sigma_3 Z + H Z = F,\ Z(0) \text{ given},
\ee
\be\nonumber
H = \bpm \Delta - \mu & 0 \\ 0 & -\Delta + \mu \epm + \bpm W_1 & W_2 \\ -W_2 & -W_1 \epm.
\ee
Assume that $V=V_1 V_2$ and $V_1$, $V_2 \in \mc S$. Also assume that $\|A\|_{\infty}$ and $\|v\|_{\infty}$ are sufficiently small, in a manner that depends on $V$, and that $\sigma(H_0)$ contains no exceptional values of $H$. Then
\be
\|P_c Z\|_{L^{\infty}_t \dot H^{1/2}_x \cap L^2_t \dot W^{1/2, 6}_x} \leq C \Big(\|Z(0)\|_{\dot H^{1/2}} + \|F\|_{L^1_t \dot H^{1/2}_x + L^2_t \dot W^{1/2, 6/5}_x}\Big).
\ee
\end{theorem}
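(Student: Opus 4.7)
My plan is to follow the author's hint: rather than treating the terms $-iv(t)\dl Z$ and $A(t)\sigma_3 Z$ as forcing via Strichartz duality (which would lose a derivative at the critical $\dot H^{1/2}$ regularity), I would incorporate them into the time-dependent Hamiltonian and reduce, by explicit gauge transformations, to a Schr\"odinger equation with only zeroth-order, smooth, decaying, time-dependent matrix potential. The point is that $-iv(t)\dl$ has constant coefficients in $x$ and $A(t)\sigma_3$ commutes with $\mc H_0$, so both admit exact cancellation.

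Concretely, substitute $\tilde Z(x,t) = e^{i\sigma_3 \Phi(t)} Z(x - Y(t), t)$, with $Y(t) = -\int_0^t v(s)\dd s$ and $\Phi(t) = \int_0^t A(s)\dd s$. The moving-frame translation cancels $-iv(t)\dl$ (since $\dl$ commutes with $x$-translation) and the diagonal phase rotation cancels $A(t)\sigma_3$ (since $\sigma_3$ commutes with $\mc H_0$). The only cost is that the autonomous potential $V$ is replaced by $\tilde V(t, x) = e^{i \sigma_3 \Phi(t)} V(x + Y(t)) e^{-i \sigma_3 \Phi(t)}$, which retains the factored form $\tilde V = \tilde V_1 \tilde V_2$ with uniform smoothness and decay, and which equals a pointwise $\Phi(t)$-rotation of $V$ evaluated at a translated argument. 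Because the $\dot H^{1/2}$ and $\dot W^{1/2,6}$ norms are invariant under translation and under the unitary diagonal rotation, estimating $\tilde Z$ in the target norm is equivalent to estimating $Z$.

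For the reduced equation $i\partial_t \tilde Z + \mc H_0 \tilde Z + \tilde V(t,\cdot) \tilde Z = \tilde F$, I would establish endpoint Strichartz by combining three ingredients: (a) Keel--Tao endpoint Strichartz for $\mc H_0 = \sigma_3(\Delta - \mu)$, which decouples into two shifted, conjugated copies of the free Schr\"odinger group so the $\dot H^{1/2}$ endpoint passes through; (b) the Kato smoothing bound $\|\tilde V_2 \, e^{it\mc H_0} f\|_{L^2_{t,x}} \lesssim \|f\|_{L^2}$, uniform in the translation/rotation parameters since $\tilde V_2$ has uniform decay; and (c) a Duhamel iteration via $T^*T$-type bilinear Kato--Strichartz bounds, upgrading to the full $\mc H = \mc H_0 + V$ bound using the $L^p$-boundedness of the wave operators $W_\pm(\mc H, \mc H_0)$ (Cuccagna--Mizumachi and Beceanu--Erdo\u{g}an--Schlag) and the limiting absorption principle for $(\mc H-z)^{-1}$, both of which rely on the spectral assumption that $\sigma(\mc H_0)$ contains no exceptional values of $\mc H$. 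The smallness of $\|v\|_{\infty}$ and $\|A\|_{\infty}$ is used to perturb off the autonomous estimate for $\mc H$ and close a fixed-point argument in the smoothing and Strichartz norms.

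The main obstacle is closing this bootstrap at critical $\dot H^{1/2}$ regularity with only $L^\infty_t$ smallness of the perturbation parameters: there is no spare spatial weight, temporal decay, or extra regularity, so endpoint Kato smoothing and endpoint Strichartz must combine exactly. The transformation-based elimination of the $-iv(t)\dl$ term is essential, because a direct Duhamel treatment would require placing $\dl \tilde Z$ in a dual Strichartz norm, which is impossible at the endpoint without gaining a derivative. The non-self-adjoint matrix structure is a secondary complication handled, as in \cite{bec}, by the real interpolation version of the Keel--Tao argument.
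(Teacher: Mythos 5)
Your reduction is the same as the paper's: the gauge transformation you describe is exactly the family of isometries $U(t) = e^{\int_0^t(2v(s)\dl + iA(s)\sigma_3)\dd s}$ used in Section \ref{sec3.7}, and it correctly converts the first-order terms into a potential $\tilde V(t,x)$ that is $V$ translated by $Y(t)$ and conjugated by a phase. Your route to the \emph{autonomous} endpoint estimate (wave operators of Cuccagna--Mizumachi plus Keel--Tao, rather than the paper's resolvent representation of $P_c$ and a Plancherel/contour-shift argument) is different but legitimate; the paper itself notes that the autonomous Strichartz bounds follow from the boundedness of the wave operators.

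The genuine gap is in the step you summarize as ``perturb off the autonomous estimate for $\mc H$ and close a fixed-point argument.'' The naive perturbation fails: $\tilde V(t,\cdot)-V$ is not small in any norm uniformly in $t$, because $Y(t)=-\int_0^t v$ is unbounded even when $\|v\|_\infty$ is small, so the moving potential is eventually far from the fixed one and no direct contraction in the potential is available. The paper's resolution --- and the missing idea here --- is to perturb at the level of the weighted Duhamel kernel: factor $V-P_p\mc H=F_1(V)F_2(V)$ with $F_1,F_2$ carrying weights $\langle x\rangle^{\mp\tilde N}$ (Lemma \ref{lemma32}), and prove (Lemma \ref{lem_32}) that
\be\nonumber
\|\langle x\rangle^{-N}\big(U(t)U(\tau)^{-1}e^{i(t-\tau)\mc H_0}-e^{i(t-\tau)\mc H_0}\big)\langle x\rangle^{-N}\|_{\mc L(L^2_\tau \dot H^s_x,\,L^2_t\dot H^s_x)}\le C(\|A\|_\infty+\|v\|_\infty)^{\epsilon},
\ee
by playing the dispersive decay $|t-s|^{-3/2}$ against the drift bound $|Y(t)-Y(s)|\le\|v\|_\infty|t-s|$. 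This makes $I-i\tilde T_{F_1,F_2}$ a small perturbation of $I-iT_{F_1,F_2}$, whose inverse is exactly the autonomous Duhamel operator bounded by Theorem \ref{theorem_26}, and the Neumann series closes the argument. Note also that your diagnosis ``there is no spare spatial weight'' points in the wrong direction: the decay of $V$ is precisely what supplies the weights $\langle x\rangle^{-N}$ that make this kernel comparison work, and without some such quantitative comparison your bootstrap does not close.
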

Here the assumptions are not sharp; for sharp results, see \cite{bec3}. $P_c$ is the projection on the continuous spectrum of $H$.
\section{The Nonlinear Result}
\subsection{Notations and preliminary results}

We linearize the original equation (\ref{NLS}) around a moving soliton $w_{\pi}(t)$. Substituting $\psi(t) = w_{\pi}(t) + r(t)$ in (\ref{NLS}),
\be
i\partial_t (w_{\pi} + r) + \Delta (w_{\pi} + r) + (\ov w_{\pi} + \ov r) (w_{\pi}+r)^2 = 0.
\ee
In keeping with (\ref{1.1}), the soliton $w_{\pi}(t)$ is described by
\be\textstyle\nonumber
w_{\pi}(t)=e^{i(\Gamma(t) + \int_0^t (\alpha^2(s) - |v(s)|^2) \dd s + v(t) \cdot x)} \phi\big(x - 2\int_0^t v(s) \dd s - D(t), \alpha(t)\big).
\ee
Note that $w_{\pi}(t)$ depends on the values $\pi$ takes on $[0, t]$. If we define $w: \R^8 \to \sol$ by 
\be\lb{wpi}
w(p)=w(\alpha, \Gamma, v, D) = e^{i(\Gamma + v \cdot x)} \phi\big(x - D, \alpha\big),
\ee
then $w_{\pi}(t) \ne w(\pi(t))$. In fact,
$$
w_\pi(t) = w\Big(\alpha(t), \Gamma(t) + \int_0^t (\alpha^2(s) - |v(s)|^2) \dd s, v(t), D(t) + 2\int_0^t v(s) \dd s\Big).
$$
Expanding the equation accordingly, note that
\begin{align}\lb{2.4}
\partial_t w_{\pi} &= (\dot \Gamma + \alpha^2 - v^2) \partial_{\Gamma} w_{\pi} + \dot \alpha \partial_{\alpha} w_{\pi} + \dot v \partial_v w_{\pi} - (2v + \dot D) \partial_D w_{\pi}\\
\Delta w_{\pi} &= \Delta e^{i \theta(x, t)} \phi(x-y(t), \alpha(t)) + 2 \dl e^{i \theta(x, t)} \dl \phi(x-y(t), \alpha(t)) + \nonumber\\
&+ e^{i \theta(x, t)} \Delta \phi(x-y(t), \alpha(t)) \\
&= (\alpha^2 -v^2) w_{\pi} + 2i v \dl w_{\pi} - |w_{\pi}|^2 w_{\pi}. \nonumber
\end{align}
Here we use the following notation: given a soliton $w=w(p)$ as in (\ref{wpi}), $\partial_{\Gamma} w(p)$, $\partial_{\alpha} w(p)$, $\partial_{D_k} w(p)$, and $\partial_{v_k} w(p)$ are the partial derivatives of $w(p)$ with respect to these parameters. They span the tangent space $T_{w(p)} \sol$ to the soliton manifold $\sol$ at the point $w(p)$:
$$
\partial_{\Gamma} w(p) = iw(p),\ \partial_{\alpha} w(p) = \partial_{\alpha} w(p),\ \partial_{D_k}w(p) = \partial_{x_k} w(p),\ \partial_{v_k} w(p) = i x_k w(p).
$$
We thus define the differential $d w(p)$ of the map $p \mapsto w=w(p): \R^8 \to \sol$. The differential $d w(p): T_p \R^8 \to T_{w(p)} \sol$ maps a tangent vector $\delta p = (\delta \alpha, \delta \Gamma, \delta v_k, \delta D_k) \in T_p \R^8$ to one in $T_{w(p)} \sol$:
$$
d w\, \delta p = \partial_{\Gamma} w\, \delta \Gamma + \partial_{\alpha} w\, \delta \alpha + \partial_{D_k} w\, \delta D_k + \partial_{v_k} w\, \delta v_k.
$$

The explicit computation (\ref{2.4}) shows the cancellation of the cubic term containing $w_{\pi}$. The equation for $r$ becomes
\begin{multline}\lb{2.7}
i\partial_t r + \Delta_x r + i(d w_{\pi}(t))\dot \pi(t) + (|r|^2 r + r^2 \ov w_{\pi} + 2|r|^2 w_{\pi} + 2r|w_{\pi}|^2 + \ov r w_{\pi}^2) = 0.
\end{multline}
Here $d w_\pi(t)$ is the differential $d$ evaluated at the point $w_\pi(t)$. Since as noted previously $w_{\pi}(t) \ne w(\pi(t))$, it follows that $d w_{\pi}(t)\, \dot \pi(t) \ne \partial_t w_{\pi}(t)$.

(\ref{2.7}) is a Schr\"{o}dinger equation in $r$. By conjugating (\ref{2.7}), we obtain an equivalent equation for $\ov r$:
\be\lb{2.8}
i\partial_t \ov r - \Delta \ov r + i \ov{d w_{\pi}} \dot \pi - (|r|^2 \ov r + (\ov r)^2 w_{\pi} + 2|r|^2 \ov w_{\pi} + 2\ov r|w_{\pi}|^2 + r (\ov w_{\pi})^2) = 0.
\ee
Since both (\ref{2.7}) and (\ref{2.8}) involve both $r$ and $\ov r$, it is most convenient to solve them together as a system or, rather, to see the pair of equations as just one equation concerning the column vector $R = \bpm r \\ \ov r \epm$.

In the sequel we employ column vectors consisting of a complex-valued function (written in lowercase) and its conjugate, e.g.
$$
\Psi = \bpm \psi \\ \ov \psi \epm,\ R = \bpm r \\ \ov r \epm,\ Z = \bpm z \\ \ov z \epm, \text{ etc.}
$$
As in \cite{schlag}, subsequent computations preserve this symmetry. Indeed, henceforth all column vectors are of the form $F = \bpm f \\ \ov f \epm$, or, otherwise put, $\sigma_2 \ov F = F$, where $\sigma_2 = \bpm 0 & 1 \\ 1 & 0 \epm$.

This fact makes the following dot-product, which we use henceforth, real-valued:
$$
\langle F, G \rangle := \langle f, g \rangle + \langle \ov f, \ov g \rangle := \int_{\R^3} \big(f(x) \ov g(x) + \ov f(x) g(x)\big) \dd x.
$$

The terms of (\ref{2.7}) and (\ref{2.8}) that are linear in $r$ and $\ov r$ give rise to a time-dependent, nonselfadjoint two-by-two matrix Hamiltonian
\be\lb{2.10}
H_{\pi}(t) = \bpm \Delta + 2|w_{\pi}(t)|^2 & w_{\pi}(t)^2 \\ -\ov w_{\pi}(t)^2 & -\Delta - 2|w_{\pi}(t)|^2 \epm = \Delta \sigma_3 + V_{\pi}(t),
\ee
whereas the other terms collect into the homogenous right-hand side of the equation:
\be\lb{F}
F = \bpm i(d w_{\pi})\dot \pi + |z|^2 z + z^2 \ov{w}_{\pi} + 2|z|^2 w_{\pi} \\ i \, \ov{d w_{\pi}} \dot \pi - |z|^2 \ov z - (\ov z)^2 w_{\pi} - 2|z|^2 \ov w_{\pi} \epm.
\ee
Let $\Sol  = \{W(p) = \bpm w(p) \\ \ov w(p) \epm \mid w(p) \in \sol\}$.
Given $W \in \Sol$, consider its partial derivatives with respect to the modulation parameters $f \in \{\alpha, \Gamma, v_k, D_k\}$, $\partial_f W = \bpm \partial_f w \\ \ov {\partial_f w} \epm$:
$$\begin{aligned}
&\partial_{\Gamma}W(p) = \bpm i w(p) \\ -i \ov w(p) \epm = i \sigma_3 W(p),\ \partial_{\alpha} W(p) = \partial_{\alpha} W(p),\\
&\partial_{D_k} W(p) = \partial_{x_k} W(p),\ \partial_{v_k} W(p) = i \sigma_3 x_k W(p).
\end{aligned}$$

Here $\sigma_3$ is the third Pauli matrix $\sigma_3 = \bpm 1 & 0 \\ 0 & -1 \epm$.

Define the differential $d W(p): T_p \R^8 \to T_{W(p)} \Sol$ of the map $p \mapsto W(p)$:
$$\begin{aligned}
d W\, \delta p &= \partial_{\Gamma} W\, \delta \Gamma + \partial_{\alpha} W\, \delta \alpha + \partial_{D_k} W\, \delta D_k + \partial_{v_k} W\, \delta v_k.
\end{aligned}$$
Let $W_{\pi}(t) = \bpm w_{\pi}(t) \\ w_{\pi}(t) \epm$. The first term in (\ref{F}) is then $i d_{\pi} W_{\pi}(t)\, \dot \pi(t) = \bpm i d_\pi w_{\pi}(t)\, \dot \pi(t) \\ i\, \ov{d_\pi w_{\pi}(t)}\, \dot \pi(t) \epm$.\\
The remaining nonlinear term in (\ref{F}) is
$$
N(R, W_{\pi}(t)) = \bpm -|r|^2 r - r^2 \ov{w}_{\pi}(t) - 2|r|^2 w_{\pi}(t) \\ |r|^2 \ov {r} + \ov {r}^2 w_{\pi}(t) + 2|r|^2 \ov w_{\pi}(t) \epm.
$$
The equation fulfilled by the vector variable $R$ becomes
\be\lb{rr}
i \partial_t R - H_{\pi}(t) R = F(t),\ F = -i d_{\pi} W_{\pi}\, \dot \pi + N(R, W_{\pi}).
\ee

To this equation for $R$ we join the modulation equations (\ref{mod}) that determine $\pi$, hence the term $-i d_{\pi} W_{\pi}\, \dot \pi$.

For reference, define the cotangent vectors $\partial_f^* W(p) \in T_{W(p)} \Sol$
\be\begin{aligned}\lb{2.19}
\partial^*_{\alpha} W &= i\sigma_3 \partial_{\Gamma} W, &\partial^*_{\Gamma} W &= i\sigma_3 \partial_{\alpha}W, \\
\partial^*_{v_k} W &= i\sigma_3 \partial_{D_k} W, &\partial^*_{D_k}W &= i\sigma_3 \partial_{v_k} W.
\end{aligned}\ee
These allow one to define the following differential form $d^*:T^* \R^8 \to T^* \Sol$:
$$\begin{aligned}
&d^* W(p): \set R^8 \to T^* \Sol,\ p \mapsto d^*_p W(p),\\
&d^* W\, \delta \pi = \partial^*_{\Gamma} W\, \delta \Gamma + \partial^*_{\alpha} W\, \delta \alpha + \partial^*_{D_k} W\, \delta D_k + \partial_{v_k}^* W\, \delta v_k.
\end{aligned}$$
Also consider the second differential $d d^*\, W(p): T T^* \set R^8 \to T T^*\Sol$, defined as the differential of $d^*\, W(p)$, for $f \in \{\alpha, \Gamma, v_k, D_k\}$.

At each time $t$ and for all $f \in \{\alpha, \Gamma, v_k, D_k\}$ we impose the \emph{orthogonality condition}
\be\lb{2.20}
\langle R(t), \partial^*_{f} W_{\pi}(t) \rangle = 0.
\ee
This condition arises as follows: to each $p = (\alpha, \Gamma, v_k, D_k) \in \set R^8$ associate the Hamiltonian
$$\begin{aligned}
H(W(p)) &= \bpm \Delta + 2|w(p)|^2 & w(p)^2 \\ -w(p)^2 & -\Delta -2|w(p)|^2 \epm + 2iv \dl - (\alpha^2-|v|^2) \sigma_3.
\end{aligned}$$
Note that $H(W_{\pi}(t)) = H_{\pi}(t) + 2iv(t) \dl - (\alpha(t)^2-|v(t)|^2) \sigma_3$. Condition (\ref{2.20}) states that $R(t)$ has no component in the zero eigenspace of $H(W_{\pi}(t))$.

As an important aside, due to the orthogonality condition, the momentum of $\psi$ decomposes into the momentum of $w$ and the momentum of $r$:
$$\begin{aligned}
P[\psi] &= \int_{\R^3} i\dl \psi(x, t) \ov {\psi(x, t)} \dd x \\
&= \int_{\R^3} i \dl w \ov w \dd x + \int_{\R^3} i \dl r \ov r \dd x + \int_{\R^3} i (\dl w \ov r + \dl r \ov w) \dd x \\
&= P[w] + P[r] + \int_{\R^3} i (\dl w \ov r - \dl \ov w r) \dd x = P[w] + P[r] + \sum_{k=1}^3 \langle r, \partial^*_{v_k} W \rangle \\
&= P[w(t)] + P[r(t)].
\end{aligned}$$
$P[\psi]$ is a constant of motion, so if (\ref{2.20}) holds then $P[w(t)] + P[r(t)]$ is constant.

Taking the time derivative in (\ref{2.20}) leads to modulation equations. We use the constant $\|\phi\|_{\dot H^{1/2}}$ within the formulae; recall $\phi$ is given by (\ref{phi}).

\begin{lemma}[The modulation equations]\lb{lemma_6} \eqref{2.20} holds for all $t$ if and only if \eqref{2.20} is satisfied at $t=0$ and if the following equations are true:
\be\begin{aligned}
\dot \alpha &= 2\alpha^2\|\phi\|_{\dot H^{1/2}}^{-2} \big(\langle R, (d_{\pi} \partial^*_{\alpha} W_{\pi})\, \dot \pi \rangle - i\langle N(R, W_{\pi}), \partial^*_{\alpha} W_{\pi}\rangle\big)\\
\dot \Gamma &= 2\alpha^2\|\phi\|_{\dot H^{1/2}}^{-2} \big(\langle R, (d_{\pi} \partial^*_{\Gamma}W_{\pi})\, \dot \pi\rangle - i\langle N(R, W_{\pi}), \partial^*_{\Gamma}W_{\pi}\rangle\big)\\
\dot v_k &= \alpha\|\phi\|_{\dot H^{1/2}}^{-2} \big(\langle R, (d_{\pi} \partial^*_{v_k}W_{\pi})\, \dot \pi\rangle - i\langle N(R, W_{\pi}), \partial^*_{v_k}W_{\pi}\rangle\big)\\
\dot D_k &= \alpha\|\phi\|_{\dot H^{1/2}}^{-2} \big(\langle R, (d_{\pi} \partial^*_{D_k}W_{\pi})\, \dot \pi\rangle - i\langle N(R, W_{\pi}), \partial^*_{D_k}W_{\pi}\rangle\big).
\lb{mod}\end{aligned}\ee
\end{lemma}
\begin{proof}
Differentiating (\ref{2.20}) with respect to $t$, we obtain
$$
\langle R, \partial_t \partial^*_f W_{\pi} \rangle = - \langle \partial_t R, \partial^*_f \rangle,
$$
where $\partial_t R$ is expressed by (\ref{rr}):
$$
\langle R, \partial_t \partial^*_f W_{\pi} \rangle = - \big\langle i H_{\pi}(t) R + d_{\pi} W_{\pi} \dot \pi + i N(R, W_{\pi}), \partial^* f \big\rangle.
$$
We process each term separately. For every $W \in \Sol$
\be\begin{aligned}
\langle \partial_{\alpha}W, \partial^*_f W \rangle &= \frac 1 {4 \alpha} \|W\|_2^2 \text{ if } f=\alpha \text{ and zero otherwise}\\
\langle \partial_{\Gamma}W, \partial^*_f W \rangle &= \frac 1 {4 \alpha} \|W^{}\|_2^2 \text{ if } f=\Gamma \text{ and zero otherwise}\\
\langle \partial_{D_k}W, \partial^*_f W \rangle &= \frac 1 2 \|W^{}\|_2^2 \text{ if } f=D_k \text{ and zero otherwise}\\
\langle \partial_{v_k}W, \partial^*_f W \rangle &= \frac 1 2 \|W^{}\|_2^2 \text{ if } f=v_k \text{ and zero otherwise}.
\end{aligned}\ee
Moreover, $\|W\|_2^2 = 2\alpha^{-1} \|\phi\|_{\dot H^{1/2}}^2$. Thus
$$\begin{aligned}
&\langle d_{\pi} W_{\pi} \dot \pi, \partial^*_f W \rangle = \frac 1 {2\alpha^2} \|\phi\|_{\dot H^{1/2}}^2 \dot f,\ f \in \{\alpha, \Gamma\};\\
&\langle d_{\pi} W_{\pi} \dot \pi, \partial^*_f W \rangle = \frac 1 {\alpha} \|\phi\|_{\dot H^{1/2}}^2 \dot f,\ f \in \{D_k, v_k\}.
\end{aligned}$$
Furthermore,
$$
\partial_t \partial^*_fW_{\pi} = (d \partial^*_fW_{\pi})\, \dot \pi + \big(H^*(W_{\pi}) - H_{\pi}^*\big) \partial^*_fW_{\pi}.
$$
Finally,
\be\begin{aligned}\lb{2.22}
H^*(W) \partial^*_{\alpha}W &= 0, &H^*(W) \partial^*_{\Gamma}W &= -2i \partial^*_{\alpha}W, \\
H^*(W) \partial^*_{v_k}W &= 0, &H^*(W) \partial^*_{D_k}W &= -2i \partial^*_{v_k}W.
\end{aligned}\ee
This leads to (\ref{mod}).
\end{proof}

Let
\be\begin{aligned}
L_{\pi^{}} R = &2\alpha^2 \sum_{f \in \alpha, \Gamma} \|\phi\|_{\dot H^{1/2}}^{-2} \langle R, (d_{\pi} \partial^*_fW_{\pi}) \dot \pi \rangle \partial_f W_{\pi} \\
+ & \alpha \sum_{f \in \{v_k, D_k\}} \|\phi\|_{\dot H^{1/2}}^{-2} \langle R, (d_{\pi} \partial^*_fW_{\pi}) \dot \pi \rangle \partial_f W_{\pi}
\end{aligned}\ee
and
\be\begin{aligned}
N_{\pi}(R, W_{\pi}) = & 2\alpha^2 \sum_{f \in \{\alpha, \Gamma\}} \|\phi\|_{\dot H^{1/2}}^{-2} i\langle N(R, W_{\pi}), \partial^*_fW_{\pi} \rangle \partial_{f}W_{\pi} \\
+ & \alpha\sum_{f \in \{v_k, D_k\}} \|\phi\|_{\dot H^{1/2}}^{-2} i\langle N(R, W_{\pi}), \partial^*_fW_{\pi}\rangle \partial_{f} W_{\pi}.
\end{aligned}\ee

\begin{observation}
The modulation equations (\ref{mod}) are equivalent to
\be
d_{\pi} W_{\pi}\, \dot \pi = L_{\pi^{}} R - i N_{\pi}(R, W_{\pi}).
\lb{modula}
\ee
\end{observation}
$L_{\pi} R$ represents the part that is linear in $R$ and $N_{\pi}(R, W)$ represents the nonlinear component $\langle N(R, W), \partial^*_fW_{\pi}\rangle$.

Let $P_0(W_{\pi}(t))$ be the zero spectral projection of $H(W_{\pi}(t))$; see Section \ref{spectru}. Then $N_{\pi}(R, W_{\pi}) = P_0(W_{\pi}) N(R, W_{\pi})$ and $L_{\pi} R$ also has a similar expression.

\subsection{The linearized equation} At this point we linearize the equation (\ref{rr}). We introduce auxiliary functions $R^0$ and $\pi^0$ to represent all terms that are quadratic or cubic in $R$ or $\pi$. We keep linear, first-order terms in the unknowns $R$ and $\pi$ for which we solve the equation. The equation becomes linear in $R$ and $\pi$ and quadratic and cubic in $R^0$ and $\pi^0$.


Starting with $\pi^0$, we construct the moving soliton $W_{\pi^0}$, its partial derivatives $\partial_f W_{\pi^0}$, cotangent vectors $\partial^*_f W_{\pi^0}$, and the differentials $d W_{\pi^0}$ and $d \partial^*_f W_{\pi^0}$ according to the given definitions:
\be\begin{aligned}\lb{w0}
\pi^0(t) &= (\alpha^0(t), \Gamma^0(t), v^0(t), D^0(t)), \\
w_{\pi^0}(t) &\textstyle = e^{i(\Gamma^0(t) + \int_0^t ((\alpha^0)^2(s) -|v^0|^2(s)) \dd s + v^0(t) \cdot x)}
\textstyle\phi(x - 2\int_0^t v^0(s) \dd s - D^0(t), \alpha^0(t)), \\
W_{\pi^0}(t) &= \bpm w_{\pi^0}(t) \\ \ov {w_{\pi^0}}(t) \epm,
\end{aligned}\ee
as well as
$$\begin{aligned}
\partial_{\Gamma} w_{\pi^0} &= iw_{\pi^0},\ \partial_{\alpha} w_{\pi^0} = \partial_{\alpha} w_{\pi^0},\ \partial_{D_k} w_{\pi^0} = \partial_{x_k} w_{\pi^0},\ \partial_{v_k} w_{\pi^0} = i x_k w_{\pi^0}, \\
\partial_{f} W_{\pi^0} &= \bpm \partial_f w_{\pi^0} \\ \partial_f \ov w_{\pi^0} \epm, \\
\partial^*_{\alpha}W_{\pi^0} &= i\sigma_3 \partial_{\Gamma} W_{\pi^0},\ \partial^*_{\Gamma}W_{\pi^0} = i\sigma_3 \partial_{\alpha} W_{\pi^0},\ 
\partial^*_{v_k}W_{\pi^0} = i\sigma_3 \partial_{D_k} W_{\pi^0},\ \partial^*_{D_k}W_{\pi^0} = i\sigma_3 \partial_{v_k} W_{\pi^0}.
\end{aligned}$$

The connection between (\ref{NLS}), (\ref{rr}), and the linearized equation (\ref{ec_liniara}) is the following:
\begin{lemma}
$\psi$ is a solution of \eqref{NLS} if and only if
\be
\Psi = \bpm \psi \\ \ov{\psi} \epm = W_{\pi^0} + R,
\ee
$W_{\pi^0}(t)$ is a moving soliton parametrized by \eqref{w0}, and $(R, \pi)$ is a fixed point of the map $(R^0, \pi^0) \mapsto (R, \pi)$, where $R$ and $\pi$ solve the following system of linear equations:
\be\begin{aligned}
&i \partial_t R + H_{\pi^0}(t) R = F,\ F= -i L_{\pi^0} R + N(R^0, W_{\pi^0}) - N_{\pi^0}(R^0, W_{\pi^0})\\
&\dot f = 2(\alpha^0)^2 \|\phi\|_{\dot H^{1/2}}^{-2} \big(\langle R, (d_{\pi} \partial^*_fW_{\pi^0})\, \dot \pi^0 \rangle - i\langle N(R^0, W_{\pi^0}), \partial^*_fW_{\pi^0}\rangle\big), f \in \{\alpha, \Gamma\}\\
&\dot f = \alpha^0 \|\phi\|_{\dot H^{1/2}}^{-2} \big(\langle R, (d_{\pi} \partial^*_fW_{\pi^0})\, \dot \pi^0 \rangle - i\langle N(R^0, W_{\pi^0}), \partial^*_fW_{\pi^0}\rangle\big), f \in \{v_k, D_k\}.
\end{aligned}\lb{ec_liniara}\ee
\lb{lemma1.1}
\end{lemma}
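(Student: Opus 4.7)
This lemma is an algebraic reformulation: the substantive work has already been carried out in equations (\ref{rr}), (\ref{mod}), and (\ref{modula}) above, and the statement merely repackages that content as a fixed-point problem. My plan is a direct verification in both directions, followed by a remark on the well-definedness of the underlying decomposition.

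\emph{Fixed point $\Rightarrow$ solution.} Suppose $(R^0,\pi^0)=(R,\pi)$ in the system (\ref{ec_liniara}). The four modulation equations then read exactly as the equations of Lemma \ref{lemma_6}. For the Schr\"{o}dinger equation in (\ref{ec_liniara}), the forcing becomes
\[
F = -iL_{\pi}R + N(R,W_{\pi}) - N_{\pi}(R,W_{\pi}).
\]
Applying the identity (\ref{modula}) in the form $L_{\pi}R = (d_{\pi}W_{\pi})\dot\pi + iN_{\pi}(R,W_{\pi})$, the $L_\pi$ and $N_\pi$ contributions collapse to
\[
F = -i(d_{\pi}W_{\pi})\dot\pi + N(R,W_{\pi}),
\]
which is precisely the right-hand side of (\ref{rr}). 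Hence $R$ satisfies (\ref{rr}) and $\pi$ satisfies (\ref{mod}); undoing the matrix-form derivation that led to these equations recovers a $\psi$ with $\Psi = W_{\pi}+R$ solving (\ref{NLS}).

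\emph{Solution $\Rightarrow$ fixed point.} Conversely, given $\psi$ a solution of (\ref{NLS}) lying sufficiently close to the soliton manifold, the modulation ansatz together with the orthogonality conditions (\ref{2.20}) produces a unique path $\pi$ and residual $R = \Psi - W_{\pi}$, which then satisfy (\ref{rr}) and (\ref{mod}) by construction. Reinserting $(d_{\pi}W_{\pi})\dot\pi$ via (\ref{modula}) into the right-hand side of (\ref{rr}) exactly reverses the computation above, so choosing $R^0 = R$ and $\pi^0 = \pi$ makes $(R,\pi)$ the unique solution of (\ref{ec_liniara}), i.e., a fixed point of the map.

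The one non-algebraic ingredient is the well-posedness of the decomposition $\Psi = W_{\pi} + R$ with $R$ subject to (\ref{2.20}) for $\Psi$ in a neighborhood of the soliton manifold. This is a standard application of the implicit function theorem, enabled by the non-degenerate pairings $\langle \partial_f W,\Xi_g(W)\rangle$ being proportional to $\|W\|_2^2\,\delta_{fg}$, as displayed at the start of the proof of Lemma \ref{lemma_6}. Beyond this, I do not expect any substantive obstacle: the lemma is a bookkeeping step setting up the fixed-point scheme, whose hard analysis --- Strichartz estimates for the time-dependent Hamiltonian $i\partial_t + \mc H_{\pi^0}(t)$ and the contraction argument for the map $(R^0,\pi^0)\mapsto(R,\pi)$ --- is deferred to the subsequent sections.
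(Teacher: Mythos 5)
Your proof is correct and follows essentially the same route as the paper: both hinge on substituting $(R^0,\pi^0)$ into the higher-order terms and on the identity (\ref{modula}) to convert between $(d_{\pi}W_{\pi})\dot\pi$ and $L_{\pi}R - iN_{\pi}(R,W_{\pi})$, so that at a fixed point the system (\ref{ec_liniara}) collapses to (\ref{rr}) and (\ref{mod}). Your explicit check of both directions of the equivalence, and the remark on well-posedness of the orthogonal decomposition, are consistent with (and slightly more detailed than) the paper's own argument.
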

Here
\be\lb{221}\begin{aligned}
H_{\pi^0}(t) &= \bpm \Delta + 2|w_{\pi^0}(t)|^2 & (w_{\pi^0}(t))^2 \\ -(\ov w_{\pi^0}(t))^2 & -\Delta - 2|w_{\pi^0}(t)|^2 \epm, \\
N(R^0, W_{\pi^0}) &= \bpm -|r^0|^2 r^0 - (r^0)^2 \ov w_{\pi^0} - 2|r^0|^2 w_{\pi^0} \\ |r^0|^2 \ov {r^0} + (\ov {r^0})^2 w_{\pi^0} + 2|r^0|^2 \ov w_{\pi^0} \epm, \\
L_{\pi^0} R &= 2(\alpha^0)^2 \sum_{f \in \alpha, \Gamma} \|\phi\|_{\dot H^{1/2}}^{-2} \langle R, (d_{\pi} \partial^*_fW_{\pi^0}) \dot \pi^0 \rangle \partial_f W_{\pi^0} \\
&+ \alpha_0 \sum_{f \in \{v_k, D_k\}} \|\phi\|_{\dot H^{1/2}}^{-2} \langle R, (d_{\pi} \partial^*_fW_{\pi^0}) \dot \pi^0 \rangle \partial_f W_{\pi^0},
\end{aligned}\ee
\be\nonumber\begin{aligned}
N_{\pi^0}(R^0, W_{\pi^0}) &= 2(\alpha^0)^2 \sum_{f \in \{\alpha, \Gamma\}} \|\phi\|_{\dot H^{1/2}}^{-2} i\big\langle N(R^0, W_{\pi^0}), \partial^*_fW_{\pi^0} \big\rangle \partial_{f}W_{\pi^0} \\
&+ \alpha^0 \sum_{f \in \{v_k, D_k\}} \|\phi\|_{\dot H^{1/2}}^{-2} i\big\langle N(R^0, W_{\pi^0}), \partial^*_fW_{\pi^0}\big\rangle \partial_{f} W_{\pi^0}.
\end{aligned}
\ee
Our main objective is solving the system (\ref{ec_liniara}) in a suitable space, with a view toward applying a fixed point theorem to obtain $(R^0, \pi^0) = (R, \pi)$.

\begin{proof}
We substitute $R^0$ for $R$ and $\pi^0$ for $\pi$ into all higher order terms of (\ref{rr}) and (\ref{mod}). By this substitution, the nonlinear Schr\"{o}dinger equation (\ref{rr}) becomes the following equation, linear in both $R$ and $\dot \pi$:
\be
i \partial_t R - H_{\pi^0}(t) R = F.
\lb{nlsu}
\ee
Here $F= - i d_{\pi} W_{\pi^0}(t)\, \dot \pi(t) + N(R^0, W_{\pi^0})$ and
$$\begin{aligned}
d W_{\pi^0}(t)\, \dot \pi(t) &= \partial_{\Gamma}W_{\pi^0}(t)\, \dot \Gamma(t) + \partial_{\alpha}W_{\pi^0}(t)\, \dot \alpha(t) + \partial_{D_k}W_{\pi^0}(t)\, \dot D_k(t) + \partial_{v_k}W_{\pi^0}(t)\, \dot v_k(t).
\end{aligned}$$
The orthogonality condition we put on the linearized equation (\ref{nlsu}) is
\be\lb{ortolin}
\langle R(t), \partial^*_fW_{\pi^0}(t) \rangle = 0,\ f \in \{\alpha, \Gamma, v_k, D_k\}.
\ee
Taking the derivative in (\ref{ortolin}) with respect to $t$, we obtain the linearized version of the modulation equations of Lemma \ref{lemma_6}: (\ref{mod}) become
\be\begin{aligned}
\dot \alpha &= 2(\alpha^{0})^2 \|\phi\|_{\dot H^{1/2}}^{-2} \big(\langle R, (d \partial^*_{\alpha}W_{\pi^0}) \dot \pi^0 \rangle - i\langle N(R^{0}, \pi^{0}), \partial^*_{\alpha} W_{\pi^0}\rangle\big)\\
\dot \Gamma &= 2(\alpha^{0})^2 \|\phi\|_{\dot H^{1/2}}^{-2} \big(\langle R, (d \partial^*_{\Gamma}W_{\pi^0}) \dot \pi^0 \rangle - i\langle N(R^{0}, \pi^{0}), \partial^*_{\Gamma} W_{\pi^0}\rangle\big)\\
\dot v_k &= \alpha^0 \|\phi\|_{\dot H^{1/2}}^{-2} \big(\langle R, (d \partial^*_{v_k}W_{\pi^0}) \dot \pi ^0\rangle - i\langle N(R^{0}, \pi^{0}), \partial^*_{v_k}W_{\pi^0}\rangle\big)\\
\dot D_k &= \alpha^0 \|\phi\|_{\dot H^{1/2}}^{-2} \big(\langle R, (d \partial^*_{D_k}W_{\pi^0}) \dot \pi^0 \rangle - i\langle N(R^{0}, \pi^{0}), \partial^*_{D_k} W_{\pi^0}\rangle\big).
\lb{mod0}\end{aligned}\ee
The identity (\ref{modula}) translates into
\be
d W_{\pi^0}\, \dot \pi = L_{\pi^0} R - i N_{\pi^0}(R^0, \pi^0).
\lb{modula0}
\ee

Finally, we collect together (\ref{nlsu}) and (\ref{mod0}) and replace $d_{\pi} W_{\pi^0}\, \dot\pi$ on the right-hand side of (\ref{nlsu}) by its expression (\ref{modula}) in order to arrive at (\ref{ec_liniara}).
\end{proof}

\subsection{The spectrum of $H$}\lb{spectru} Let $\phi$ be the positive ground state soliton given by (\ref{phi}). Applying the symmetries of the equation to $\phi$, parameters $p=(\alpha, \Gamma, v_k, D_k) \in \set R^8$ define the general ground state soliton
\be\begin{aligned}
w&=w(p)=e^{i(\Gamma + v \cdot x)} \phi(x-D, \alpha), \\
W&=W(p)= \bpm w \\ \ov{w} \epm.
\end{aligned}\ee
The Hamiltonian obtained by linearizing (\ref{NLS}) around $w$ is
\be\lb{ham}
H(W) = H(p) := \bpm \Delta + 2|w|^2 & w^2 \\ -\ov w^2 & -\Delta -2|w|^2 \epm + 2iv \dl - (\alpha^2-|v|^2) \sigma_3.
\ee

All the operators $H(W)$ are in fact conjugate to one another, by the symmetries of the equation --- rescaling, boost, translation, and gauge ---, up to a constant factor of $\alpha^2$:
\be
\Dil_{1/\alpha} e^{-D\dl} T_D e^{-i(xv+\Gamma) \sigma_3} H(\alpha, \Gamma, v, D) e^{i(xv +\Gamma)\sigma_3} e^{D\dl} \Dil_{\alpha} = \alpha^2 H(1, 0, 0, 0).
\ee
Therefore, all $H(W)$ have the same spectrum up to dilation. Thus, it suffices to study $H = H(1, 0, 0, 0)$:
\be
H := \bpm \Delta - 1 + 2\phi^2 & \phi^2 \\ -\phi^2 & -\Delta + 1 - 2\phi^2 \epm.
\ee

We list the known spectral properties of $H$. As proved by Buslaev--Perelman in \cite{buslaev1} and Rodnianski--Schlag--Soffer in \cite{rod3}, under fairly general assumptions, $\sigma(H) \subset \R \cup i\R$ and is symmetric with respect to the coordinate axes. All eigenvalues are simple with the possible exception of $0$.

Furthermore, by Weyl's criterion $\sigma_{ess}(H) = (-\infty, -1] \cup [1, \infty)$.

Grillakis--Shatah--Strauss \cite{gril1} and Schlag \cite{schlag} showed that $H$ has only one pair of conjugate imaginary eigenvalues, call them $\pm i\sigma$, and that the corresponding eigenvectors decay exponentially. Hundertmark--Lee \cite{hund} removed the exponential decay assumption on the off-diagonal components of the potential.

The pair of conjugate imaginary eigenvalues $\pm i\sigma$ reflects the instability of the ground state soliton (\ref{phi}), coming from the fact that (\ref{NLS}) is $L^2$-supercritical.

The generalized eigenspace at $0$ arises due to the symmetries of the equation, which is invariant under Galilean coordinate changes, gauge transformations, and rescaling. Each symmetry gives rise to a generalized eigenstate of the Hamiltonian $H$ at $0$. Proving that there are no other zero eigenstates is harder and was done by Weinstein in \cite{wein1}, \cite{wein2}.

Using ideas of Perelman \cite{perelman}, Schlag \cite{schlag} showed that if the scalar operators
$$
L_{\pm} = -\Delta + 1 - 2\phi^2(\cdot, 1) \mp \phi^2(\cdot, 1)
$$
that arise by conjugating $H$ with $\bpm 1 & i \\ 1 & -i \epm$ have no eigenvalue in $(0, 1]$ and no resonance at $1$ --- the so-called gap property ---, then the real discrete spectrum of $H$ is $\{0\}$ and the edges $\pm 1$ of the essential spectrum are neither eigenvalues nor resonances.

Demanet--Schlag \cite{demanet} verified numerically that $L_{\pm}$ meet this condition. More recently, Costin--Huang--Schlag \cite{cohusc} have given a completely analytic proof to the gap property. Therefore, $H$ has no eigenvalues in $[-1, 1]$ and $\pm 1$ are neither eigenvalues nor resonances.

Adapting Agmon's method \cite{agmon} to the matrix case, \cite{erdsch2} and \cite{cuc2} proved that any resonances embedded in the interior of the essential spectrum (that is, in $(-\infty, -1) \cup (1, \infty)$) have to be eigenvalues, under very general assumptions.

Marzuola--Simpson \cite{marsim} have shown by an analytical, computer-assisted proof that no eigenvalue lies in the essential spectrum of $H$, thus completing, in a sense, the description of the spectrum.

To sum up, the spectrum of $H$ consists of a pair of conjugate purely imaginary eigenvalues $\pm i\sigma$, a generalized eigenspace at $0$, and the essential spectrum $(-\infty, -1] \cup [1, \infty)$.

Following \cite{schlag}, let $F^{\pm}$ be the eigenfunctions of $H$ corresponding to $\pm i \sigma$; then there exists $f^+ \in L^2$ such that
$$
F^+ = \bpm f^+ \\ \ov f^+ \epm,\ F^- = \ov F^+ = \bpm \ov f^+ \\ f^+ \epm.
$$
Due to the symmetry $\sigma_3 H \sigma_3 = H^*$ of the Hamiltonian $H$, the respective eigenfunctions of $H^*$ are $\sigma_3 F^{\pm}$. Then, the imaginary spectrum projection is given by
$$
P_{im} = P_+ + P_-,\ P_{\pm}= \langle \cdot, i\sigma_3 F^{\mp} \rangle F^{\pm}
$$
up to a constant; the constant becomes $1$ after the normalization
$$
\int_{\R^3} \Re f^+(x) \Im f^+(x) \dd x = -1/2.
$$

It helps to represent the discrete eigenspaces of $H(W)$ as explicitly as possible. Let $F^{\pm}(W)$ the eigenfunctions of $H(W)$ corresponding to the $\pm i \sigma(W)$ eigenvalues.

Schlag \cite{schlag} proved in a more general setting that $F^{\pm}(W)$, $L^2$ normalized, and $\sigma$ are locally Lipschitz continuous as a function of $W$ and that $F^{\pm}(W)$ are exponentially decaying.

There is no explicit formula for $F^{\pm}(W)$, but the dependence of $F^{\pm}(W)$ and $\sigma$ on $\pi$ is explicit, since the operators $H(W)$ are conjugate to one another:
\be\begin{aligned}\lb{2.50}
F^{\pm}(W) &= e^{i(xv+\Gamma)\sigma_3} e^{D\dl} \Dil_{\alpha} F^{\pm},\\
\sigma(W) &= \alpha^2 \sigma.
\end{aligned}\ee

Also observe that $\partial_f W$, where $f \in \{\alpha, \Gamma, v_k, D_k\}$, are the generalized eigenfunctions of $H(W)$ at zero and $\partial^*_fW$, defined as in (\ref{2.19}), fulfill the same role for~$H(W)^*$.

We then express the Riesz projections on the three components of the spectrum (continuous, null, and imaginary) of $H(W)$ as
\begin{align*}
P_{im}(W) &= P_+(W) + P_-(W),\ P_{\pm}(W) = \alpha^{-3} \langle \cdot, i\sigma_3 F^{\mp}(W) \rangle F^{\pm}(W), \\
P_{0}(W) &= 4\alpha \|W\|_2^{-2} (\langle \cdot, \partial^*_{\alpha} \rangle \partial_{\alpha}W + \langle \cdot, \partial^*_{\Gamma} \rangle \partial_{\Gamma}W) + \\
&\nonumber+ 2 \|W\|_2^{-2} \sum_k \big(\langle \cdot, \partial^*_{v_k} \rangle \partial_{v_k}W + \langle \cdot, \partial^*_{D_k} \rangle \partial_{D_k}W\big),\\
P_c(W) &= 1-P_{im}(W)-P_{0}(W).
\end{align*}


\subsection{The fixed point argument. Stability} Consider a small neighborhood of a soliton $w_0=w(p_0)$, determined by $p_0 = (\alpha_0, \Gamma_0, v_{k0}, D_{k0}) \in \R^8$, where
$$\begin{aligned}
w_0 &= e^{i(\Gamma_0 + v_0 \cdot x)} \phi(x-D_0, \alpha_0),\ W_0 &= \bpm w_0 \\ \ov w_0 \epm.
\end{aligned}$$
By Section \ref{spectru}, $W_0$ determines a Hamiltonian $H(W_0)$ of the form (\ref{ham}):
$$
H(W_0) = \bpm \Delta + 2 |w_0|^2 & w_0^2 \\ - \ov w_0^2 & -\Delta - 2|w_0|^2 \epm + 2iv_0\dl - (\alpha_0^2-|v_0|^2) \sigma_3.
$$
$H(W_0)$ has an associated null space projection $P_0(W_0)$, an imaginary spectrum projection $P_{im}(W_0) = P_+(W_0) + P_-(W_0)$, and an essential spectrum projection $P_c(W_0) = I - P_0(W_0) - P_{im}(W_0)$.

Up to quadratic corrections, the centre-stable submanifold at $W_0$ is the following affine subspace $\mc N_{lin}(W_0)$ of~$\dot H^{1/2}$:
$$\begin{aligned}
\mc N_{lin}(W_0) &:= W_0 + (P_c(W_0) + P_-(W_0)) \dot H^{1/2} \\
&= \big\{W_0 + R_0 \mid R_0 \in \dot H^{1/2},\ \big(P_0(W_0) + P_+(W_0)\big) R_0 = 0\big\}.
\end{aligned}$$
$\mc N_{lin}(W_0)$ has codimension nine within $\dot H^{1/2}$, so we need a supplementary argument, presented separately, to recover eight codimensions from the symmetries of (\ref{NLS}).

Take initial data of the form
\be\begin{aligned}
R(0) &= R_0 + h F^+(W_0), \pi(0)=\pi_0=(\alpha_0, \Gamma_0, (v_k)_0, (D_k)_0),
\lb{Z_initial}
\end{aligned}\ee
where $R_0 \in  (P_c + P_-) \dot H^{1/2}$. Thus, $W_0 + R_0 \in \mc N_{lin}(W_0)$ and the quadratic correction is $h F^+(W_0)$, made in the direction of $F^+(W_0)$. In particular,
\be
P_0(W_0) R(0) = 0,\ P_+(W_0) R(0) = h F^+(W_0).
\ee
(\ref{Z_initial}) also implies that $\|R(0)\|_{\dot H^{1/2}} \les \|R_0\|_{\dot H^{1/2}} + |h|$.

Consider the space
\be\begin{aligned}
X &= \{(R, \pi) \mid R \in L^{\infty}_t \dot H^{1/2}_x \cap L^2_t \dot W^{1/2, 6}_x,\ \dot \pi \in L^1\},
\lb{X}
\end{aligned}\ee
with the seminorm
\be
\|(R, \pi)\|_X = \|R\|_{L^{\infty}_t \dot H^{1/2}_x \cap L^2_t \dot W^{1/2, 6}_x} + \alpha_0^{-1} \|\dot \pi\|_{L^1_t},
\ee
where $\alpha_0$ is the scaling parameter of (\ref{Z_initial}).

We define a map $\Epsilon$ as follows:
\begin{definition}
$\Epsilon(R^0, \pi^0)$ is the map that associates, to a pair $(R^0, \pi^0)$ of auxiliary functions, the unique bounded solution $(R, \pi) \in X$ of the linearized equation system (\ref{ec_liniara}), 
with initial data (\ref{Z_initial}), where $R_0$ is given and $h$ is allowed to take any value:
\be
\Epsilon(R^0, \pi^0) := (R, \pi).
\lb{2.32}
\ee
\end{definition}
We show that, everything else being fixed, the solution $(R, \pi)$ of (\ref{ec_liniara}) is in $X$ for a unique value of $h := h(R_0, R^0, \pi^0)$. Thus the map $\Epsilon$ is well-defined.


$X$ is natural in the study of (\ref{NLS}) and of its linearized version (\ref{ec_liniara}). Indeed, since the Schr\"{o}dinger equation (\ref{NLS}) is $\dot H^{1/2}$-critical, we study it in the critical Strichartz space $L^{\infty}_t \dot H^{1/2}_x \cap L^2_t \dot W^{1/2, 6}_x$. Likewise, $\dot \pi \in L^1$ is a minimal assumption and there is no room for a stronger condition, when working in a critical space.

Fix a small $\delta_0 = \delta_0(W_0) >0$. We prove the following stability property: for $\delta \in (0, \delta_0)$, if $\|(R^0, \pi^0)\|_X < \delta$, then $\|(R, \pi)\|_X = \|\Epsilon(R^0, \pi^0)\|_X < \delta$.
\begin{proposition}[Stability]\lb{prop9}
Let $(R, \pi)$ be a solution of equation \eqref{ec_liniara} linearized around $(R^0, \pi^0)$ with initial data \eqref{Z_initial}. Assume that $(R^0, \pi^0)$ satisfy \eqref{Z_initial} and, for some $\delta \leq \delta_0$, $\|(R^0, \pi^0)\|_X \leq \delta$. Then $(R, \pi) \in X$ for a unique value of $h:=h(R_0, R^0, \pi^0)$ and
\be\lb{stabineq}\begin{aligned}
\|R\|_{L^{\infty}_t \dot H^{1/2}_x \cap L^2_t \dot W^{1/2, 6}_x} &\les \|R_0\|_{\dot H^{1/2}} + \delta^2, \\
\|\dot \pi\|_{L^1_t} + |h(R_0, R^0, \pi^0)| &\les \alpha_0 (\delta \|R_0\|_{\dot H^{1/2}} + \delta^2).
\end{aligned}\ee
\end{proposition}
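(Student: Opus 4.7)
The plan is to solve the linearized system (\ref{ec_liniara}) in two steps: treating $(R^0, \pi^0)$ as given, first estimate $R$ using the Strichartz-type bound of Theorem \ref{theorem_13}, then recover $\dot\pi$ from (\ref{mod0}). To put the $R$-equation into the form (\ref{1.30}), I would conjugate out the constant part $\pi_0$ of the modulation parameters via the symmetry transformation (\ref{coord}); what remains is the fixed Hamiltonian $\mc H(W_0)$ plus the small time-dependent perturbations $(v(t)-v_0)\dl$ and $(\alpha(t)^2-\alpha_0^2)\sigma_3$, controlled by $\|\dot\pi^0\|_{L^1_t}\leq C\alpha_0\delta$. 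Next, decompose $R$ via the Riesz projections $P_c(W_0)$, $P_0(W_0)$, $P_\pm(W_0)$: the orthogonality condition $\langle R,\Xi_f(W_{\pi^0})\rangle=0$ that underlies (\ref{mod0}) forces $P_0(W_0)R\equiv 0$ up to an $O(\pi^0-\pi_0)$ error; the continuous piece $P_c R$ is controlled directly by Theorem \ref{theorem_13}; and the stable scalar mode $P_- R=\eta(t)F^-(W_0)$ is automatically bounded forward in time because its scalar ODE has the decaying homogeneous solution $e^{-\sigma t}$.

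\emph{The main obstacle} is the unstable mode $P_+ R=\xi(t)F^+(W_0)$, whose coefficient satisfies a scalar inhomogeneous ODE of the form $\dot\xi-\sigma\xi=g_+(t)$, with forcing $g_+$ obtained by pairing $F$ against the dual eigenfunction of $\mc H(W_0)^*$ at $+i\sigma$. A bounded solution exists only for the shooting value
\be\nonumber
h=\xi(0)=-\int_0^\infty e^{-\sigma s}g_+(s)\,ds,
\ee
which both defines $h(R_0,R^0,\pi^0)$ uniquely and yields $|h|\leq C\|g_+\|_{L^1_t}$. Convergence and smallness of this integral come from the exponential decay of the dual eigenfunction (Section \ref{spectru}) paired against the $L^1_t L^2_{\mathrm{loc}}$ control of $F$ supplied below, and any other choice of $h$ would produce a solution differing from this one by $c\,e^{\sigma t} F^+(W_0)$, which is unbounded unless $c=0$; this also furnishes uniqueness.

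What remains is to estimate $F=-iL_{\pi^0}R+N(R^0,W_{\pi^0})-N_{\pi^0}(R^0,W_{\pi^0})$ in the Strichartz-dual norm $L^2_t\dot W^{1/2,6/5}_x+L^1_t\dot H^{1/2}_x$. The linear modulation term $L_{\pi^0}R$ contributes $\leq C\|\dot\pi^0\|_{L^1_t}\|R\|_{L^\infty_t\dot H^{1/2}_x}\leq C\delta\|R\|_X$, absorbable on the left for $\delta$ small. The cubic piece $|r^0|^2 r^0$ of $N(R^0,W_{\pi^0})$ is bounded by $\|R^0\|_{L^2_t\dot W^{1/2,6}_x}^2\|R^0\|_{L^\infty_t\dot H^{1/2}_x}\leq C\delta^3$ by H\"older and the critical Sobolev embedding, while the soliton cross-terms $|r^0|^2 w_{\pi^0}$ and $(r^0)^2\ov w_{\pi^0}$ yield $\leq C\|R^0\|_X^2\leq C\delta^2$ using $w_{\pi^0}\in L^\infty$; the projected piece $N_{\pi^0}$ admits the same bound. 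Combining these with Theorem \ref{theorem_13} gives $\|R\|_{L^\infty_t\dot H^{1/2}_x\cap L^2_t\dot W^{1/2,6}_x}\leq C(\|R_0\|_{\dot H^{1/2}}+\delta^2)$. Finally, the modulation bound follows from (\ref{mod0}): $\langle R,(d_\pi\Xi_f(W_{\pi^0}))\dot\pi^0\rangle$ is bounded by $C\|R\|_{L^\infty_t\dot H^{1/2}_x}\|\dot\pi^0\|_{L^1_t}\leq C\delta(\|R_0\|_{\dot H^{1/2}}+\delta^2)$ via the exponential localization of $\Xi_f$, $\langle N(R^0,W_{\pi^0}),\Xi_f\rangle\in L^1_t$ contributes $C\delta^2$ by the earlier nonlinear estimates, and the prefactor $\alpha_0$ arises from the normalization $4\alpha^0\|W_{\pi^0}\|_2^{-2}$, producing the claimed $\|\dot\pi\|_{L^1_t}+|h|\leq C\alpha_0(\delta\|R_0\|_{\dot H^{1/2}}+\delta^2)$.
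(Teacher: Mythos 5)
Your proposal is correct and follows essentially the same route as the paper: the time-dependent first-order terms $(v(t)-v_0)\dl$ and $(\alpha^2(t)-\alpha_0^2)\sigma_3$ are absorbed into the linear operator handled by Theorem \ref{theorem_13} rather than treated as Strichartz perturbations (which would fail because of the derivative loss in $v\dl Z$), the solution is split along the continuous/null/imaginary parts of the spectrum, the null part is annihilated by the orthogonality condition, the nonlinear and modulation source terms are $O(\delta^2)$ in the dual Strichartz norms, and $h$ is the unique shooting value $-\int_0^{\infty}e^{-\sigma s}g_+(s)\,ds$ that renders the unstable ODE mode bounded. The only real deviation is that you freeze the spectral projections at $W_0$, so that $P_0(W_0)R$ vanishes only up to an $O(\delta)\|R\|$ error and the discrete ODEs pick up $O(\delta)$ commutator forcings, whereas the paper uses the time-dependent projections $P_{\cdot}(W(\pi^0(t)))$, for which $P_0(t)Z\equiv 0$ exactly at the price of $\partial_t P(t)$ terms of the same size; both variants close the estimate.
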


If $\|R_0\|_{\dot H^{1/2}} \leq c \delta$ and $\delta$ is small, it follows that the set
\be\lb{stabset}
\{(R, \pi) \mid R(0) = R_0 + h F^+(W_0=W(\pi_0)),\ \pi(0) = \pi_0,\ \|(R, \pi)\|_X \leq \delta\}
\ee
is stable under an application of the map $\Epsilon$ and also that $|h(R_0, R^0, \pi^0)| \les \alpha_0 \delta^2$.


Proposition \ref{prop9} holds with a constant that depends on $W_0$, for sufficiently small $\delta < \delta_0(W_0)$. We are also interested in the dependence of the result on $W_0$.

Clearly, the result is invariant under translations and gauge transformations. The Strichartz norms are $\dot H^{1/2}$ scaling-invariant, while $\|\dot \pi\|_{L^1_t}$ and $h$ scale like $\alpha_0$. After accounting for this, the statement of Proposition \ref{prop9} also becomes scale-independent. The constants will still depend on the momentum of $W_0$, $P[W_0]$.

\begin{proof}
The modulation path $\pi^0:\set [0, \infty) \to \R^8$
defines the moving soliton $w_{\pi^0}(t)$ by (\ref{w0}). Linearizing around $w_{\pi^0}(t)$ yields the Hamiltonian $H_{\pi^0}(t)$, as per Lemma \ref{lemma1.1}.

Denote
$$\begin{aligned}
w(\pi^0(t)) &= e^{i(\Gamma^0(t) + v^0(t) \cdot x)} \phi\big(x-D^0(t), \alpha^0(t)\big),\ W(\pi^0(t)) = \bpm w(\pi^0(t)) \\ \ov w(\pi^0(t)) \epm;\ w(\pi^0(t)) \ne w_{\pi^0}(t).
\end{aligned}$$
In view of Theorem \ref{theorem_1.4}, consider the family of isometries
\be\lb{2.63}
U(t) = e^{\textstyle\int_0^t(2 v^0(s) \dl + i ((\alpha^0(s))^2-|v^0(s)|^2) \sigma_3) \dd s}.
\ee
With $H(W)$ given by (\ref{ham}), note that
$$\begin{aligned}
W(\pi^0(t)) = U(t) W_{\pi^0}(t),\ H(W(\pi^0(t))) = U(t)^{-1} H(W_{\pi^0}(t)) U(t),
\end{aligned}$$
and let $Z(t) := U(t) R(t)$.

Also let
\be\begin{aligned}\lb{2.67}
H(t) &:= H(W(\pi^0(t))), &P_0(t) &:= P_0(W(\pi^0(t))),\\
P_c(t) &:= P_c(W(\pi^0(t))), &P_{im}(t) &:= P_{im}(W(\pi^0(t))),\\
P_+(t) &:= P_+(W(\pi^0(t))), &P_-(t) &:= P_-(W(\pi^0(t))),\\
F^+(t) &:= F^+(W(\pi^0(t))), &F^-(t) &:= F^-(W(\pi^0(t))),\\
\partial^*_fW(t) &:= \partial^*_fW(\pi^0(t)), &\sigma(t) &:= \sigma(W(\pi^0(t))).
\end{aligned}\ee
Rewriting (\ref{ec_liniara}) 
with $Z$ as the unknown, we obtain
\be\lb{2.69}
i \partial_t Z - H(t) Z = U(t) F(t).
\ee
Instead of fixing a time-independent Hamiltonian for (\ref{2.69}), we take $H(t)$ itself as the time-dependent Hamiltonian.

The equation (\ref{2.69}) for $Z$ has three parts, which correspond to the three components of the spectrum of $H(t)$ --- continuous, null, and imaginary. Let
\be\begin{aligned}
I = P_c(t) + P_0(t) + P_{im}(t),\ P_{im}(t) = P_+(t) + P_-(t).
\end{aligned}\ee
Each component has a different behavior and has to be studied separately. 
By using a different method for each component, we prove the estimates that enable the contraction scheme.

The kernel component is bounded by the method of modulation. By (\ref{Z_initial}), the orthogonality condition
$$
\langle R(t), \partial^*_fW_{\pi^0}(t)\rangle = 0
$$
holds at time $t=0$ and due to the modulation equations (\ref{ec_liniara}) orthogonality still holds for all $t \in \set R$. Applying the isometry $U(t)$, the orthogonality condition becomes
$$
\langle Z(t), \partial^*_fW(t)\rangle = 0,
$$
which is equivalent to
\be\lb{2.68}
P_0(t) Z(t) = 0.
\ee

The continuous spectrum component of $Z$ fulfills the equation, derived from (\ref{2.69}),
\begin{align}
i\partial_t\big(P_c(t) Z\big) + H(t) P_c(t) Z &= P_c(t) U(t) F(t) + i \big(\partial_t P_c(t)\big) Z.
\lb{disp}
\end{align}
The inhomogenous term
$$
F=-i L_{\pi^0} R + N(R^0, \pi^0) - N_{\pi^0}(R^0, \pi^0)
$$
given by (\ref{221}) is bounded in the dual Strichartz norm by the fractional Leibniz rule.

Estimate the quadratic and cubic terms of $N(R^0, \pi^0)$ as follows:
$$\begin{aligned}
\|(r^0)^2 \ov w(\pi^0)\|_{L^2_t \dot W^{1/2, 6/5}_x} &\les \|(r^0)^2\|_{L^2_t \dot H^{1/2}_x} \|\ov w(\pi^0)\|_{L^{\infty}_t \dot H^{1/2}_x} \\
&\les \|R^0\|_{L^4_t \dot W^{1/2, 3}_x}^2 \|W(\pi^0)\|_{L^{\infty}_t \dot H^{1/2}_x}.
\end{aligned}$$
The same applies to the cubic term, when we replace $w(\pi^0)$ by $r^0$:
$$
\|(r^0)^2 \ov r^0\|_{L^2_t \dot W^{1/2, 6/5}_x} \les \|R^0\|_{L^4_t W^{1/2, 3}_x}^2 \|R^0\|_{L^{\infty}_t \dot H^{1/2}_x}.
$$
Since $\|R^0\|_{L^4_t \dot W^{1/2, 3}_x}^2 \leq \|R^0\|_{L^{\infty}_t \dot H^{1/2}_x} \|R^0\|_{L^2_t \dot W^{1/2, 6}_x}$,
$$
\|N(R^0, \pi^0) - N_{\pi^0}(R^0, \pi^0)\|_{L^2_t \dot W^{1/2, 6/5}_x} \les \|R^0\|_{L^2_t \dot W^{1/2, 6}_x} \big(\|R^0\|_{L^{\infty}_t \dot H^{1/2}_x} + \|R^0\|_{L^{\infty}_t \dot H^{1/2}_x}^2\big).
$$
Thus
$$
\|N(R^0, \pi^0) - N_{\pi^0}(R^0, \pi^0)\|_{L^2_t \dot W^{1/2, 6/5}_x} \les  \delta^2 + \delta^3.
$$
We bound $L_{\pi^0} R$ by Strichartz estimates:
$$\begin{aligned}
\|L_{\pi^0} R\|_{L^1_t \dot H^{1/2}_x} &\les \|\dot \pi\|_{L^1_t} \|R\|_{L^{\infty}_t \dot H^{1/2}_x} \les \delta \|R\|_{L^{\infty}_t \dot H^{1/2}_x} = C \delta \|Z\|_{L^{\infty}_t \dot H^{1/2}_x}.
\end{aligned}$$
In conclusion,
\be\lb{2.49}
\|F\|_{L^1_t \dot H^{1/2}_x + L^2_t \dot W^{1/2, 6/5}_x} \les \delta \|Z\|_{L^{\infty}_t \dot H^{1/2}_x} + \delta^2 + \delta^3
\ee
Furthermore,
\be\begin{aligned}
(\partial_t P_+(t)) Z &= (\partial_t \alpha^{-3}) \langle Z, i \sigma_3 F^-(t) \rangle F^+(t) + \alpha^{-3} \langle Z, i \sigma_3 \partial_t F^-(t) \rangle F^+(t) + \\
&+ \alpha^{-3} \langle Z, i \sigma_3 F^-(t) \rangle \partial_t F^+(t) \\
&= -3 \alpha^{-4} \dot \alpha \langle Z, i \sigma_3 F^-(t) \rangle F^+(t) + \alpha^{-3} \langle Z, i \sigma_3 d_{\pi} F^-(t) \dot \pi^0(t) \rangle F^+(t) + \\
&+ \alpha^{-3} \langle Z, i \sigma_3 F^-(t)\rangle d_{\pi} F^+(t) \dot \pi^0(t)
\end{aligned}\ee
and likewise for $P_-$ and $P_0$. Since $P_c = I - P_0 - P_{im}$ and $P_0(t) Z(t) = 0$,
\be\begin{aligned}\lb{2.72}
\|(\partial_t P_{\pm}(t)) Z\|_{L^2_t \dot W^{1/2, 6/5}_x} &\les \delta \|Z\|_{L^2_t \dot W^{1/2, 6}_x}.
\end{aligned}\ee

Provided $\|\dot \pi^0\|_1 < \delta$ is sufficiently small, Theorem \ref{theorem_1.4} leads to endpoint Strichartz estimates for $P_c(t) Z(y)$, through the following construction. Recall that
$$
H(t) = \bpm \Delta + 2|w(\pi^0(t))|^2 & w^2(\pi^0(t)) \\ -\ov w^2(\pi^0(t)) & -\Delta - 2|w(\pi^0(t))|^2\epm + 2iv^0(t) \dl - ((\alpha^0(t))^2 - |v^0(t)|^2) \sigma_3.
$$
Denote, for $\pi^0(0) = \pi_0$ following (\ref{Z_initial}),
$$
\tilde {H}(t) = \bpm \Delta + 2|w(\pi_0)|^2 & w^2(\pi_0) \\ -\ov w^2(\pi_0) & -\Delta - 2|w(\pi_0)|^2\epm  + 2iv^0(t) \dl - ((\alpha^0(t))^2 - |v^0(t)|^2) \sigma_3.
$$
We bound $(\tilde {H} - H) Z$ by endpoint Strichartz estimates. Because of the lack of an endpoint Sobolev embedding, we do this explicit computation: $\dot W^{1/2, 6} \subset \dot W^{1/2, 6-\epsilon} + \dot W^{1/2, 6+\epsilon}$ implies
\be\lb{endpoint}
\dot W^{1/2, 6} \subset \dot W^{1/2, 6} \cap \dot W^{1/2, 6-\epsilon} + \dot W^{1/2, 6}\cap \dot W^{1/2, 6+\epsilon}.
\ee
Then, by the fractional Leibniz rule,
$$
\|fg\|_{\dot W^{1/2, 6/5}} \les \|f\|_{\dot W^{1/2, 6} \cap \dot W^{1/2, 6-\epsilon}} \|g\|_{\dot W^{1/2, 6/5} \cap L^{3/2+\epsilon}}
$$
and, because $\dot W^{1/2, 6}\cap \dot W^{1/2, 6+\epsilon} \subset L^{\infty}$,
$$
\|fg\|_{\dot W^{1/2, 6/5}} \les \|f\|_{\dot W^{1/2, 6} \cap \dot W^{1/2, 6+\epsilon}} \|g\|_{\dot W^{1/2, 6/5}}.
$$
By decomposing $f \in \dot W^{1/2, 6}$ into two parts, one in $\dot W^{1/2, 6} \cap \dot W^{1/2, 6-\epsilon}$ and the other in $\dot W^{1/2, 6}\cap \dot W^{1/2, 6+\epsilon}$, in conclusion we have that
$$
\|fg\|_{\dot W^{1/2, 6/5}} \les \|f\|_{\dot W^{1/2, 6}} \|g\|_{\dot W^{1/2, 6/5} \cap L^{3/2+\epsilon}}.
$$
The difference $\tilde {H}(t) - H(t)$ is a multiplication operator and is uniformly small for all $t$ in any Schwartz seminorm; in particular,
$$
\|\tilde {H}(t) - H(t)\|_{L^{\infty}_t (\dot W^{1/2, 6/5}_x \cap L^{3/2+\epsilon}_x)} \les \|\pi^0(t)-\pi_0\|_{L^{\infty}_t} \leq \|\dot \pi^0(t)\|_{L^1_t} \leq \delta.
$$
Thus 
\be\begin{aligned}\lb{2.73}
\|(\tilde {H} - H) Z\|_{L^2_t \dot W^{1/2, 6/5}_x} &\leq \|\tilde {H}(t) - H(t)\|_{L^{\infty}_t (\dot W^{1/2, 6/5} \cap L^{3/2-\epsilon}_x)} \|Z\|_{L^2_t \dot W^{1/2, 6}_x} \\
&\les \delta \|Z\|_{L^2_t \dot W^{1/2, 6}_x}.
\end{aligned}\ee
By (\ref{2.49}), (\ref{2.72}), (\ref{2.73}), and Theorem \ref{theorem_1.4} applied to $\tilde H$, for small $\delta$
\be\begin{aligned}\lb{2.76}
\|P_c(t) Z\|_{L^{\infty}_t \dot H^{1/2}_x \cap L^2_t \dot W^{1/2, 6}_x} 
&\les \|Z(0)\|_{\dot H^{1/2}} + \delta^2 + \delta \|Z\|_{L^2_t \dot W^{1/2, 6}_x}.
\end{aligned}\ee

For the imaginary spectrum component $P_{im} Z$ write, following Section \ref{spectru},
\be
P_{im}(t) Z(t) = h^+(t) F^+(t) + h^-(t) F^-(t).
\lb{partea_finita}
\ee
Then, by taking the time derivative of
\be
h^{\pm}(t) := (\alpha^0(t))^{-3} \langle Z(t), i \sigma_3 F^{\mp}(t) \rangle,
\ee
we obtain
\be\begin{aligned}
\partial_t h^{\pm} &= (\alpha^0)^{-3} \langle \partial_t Z, i \sigma_3 F^{\mp} \rangle -3 \dot \alpha^0 (\alpha^0)^{-4} \langle Z, i \sigma_3 F^{\mp}(t) \rangle + \\
&+ (\alpha^0)^{-3} \langle Z, i \sigma_3 (d_{\pi} F^{\mp}) \dot \pi^0 \rangle.
\end{aligned}\ee
Replacing $\partial_t Z$ by its expression (\ref{ec_liniara}), we arrive at
\be\begin{aligned}\lb{hiperb}
\partial_t h^{\pm} &= \pm \sigma(t) h^{\pm} - \langle U F, \sigma_3 F^{\mp} \rangle - \\
& -3 \dot \alpha^0 (\alpha^0)^{-4} \langle Z, i \sigma_3 F^{\mp} \rangle +  (\alpha^0)^{-3} \langle Z, i \sigma_3 (d_{\pi} F^{\mp}) \dot \pi^0 \rangle.
\end{aligned}\ee
Thus $h^+(t)$ and $h^-(t)$ satisfy the equation
\be
\partial_t  \bpm h_-\\ h_+\epm + \bpm \sigma(t) & 0 \\ 0 & -\sigma(t)\epm \bpm h_-\\ h_+\epm = \bpm N_-(t) \\ N_+(t) \epm,
\lb{hiper}
\ee
where $\pm i\sigma(t)$ are the imaginary eigenvalues of $H(t)$, as in Section \ref{spectru}, and
$$
N_{\pm} = -\langle U F, \sigma_3 F^{\mp} \rangle -3 \dot \alpha^0 (\alpha^0)^{-4} \langle Z, i \sigma_3 F^{\mp} \rangle +  (\alpha^0)^{-3} \langle Z, i \sigma_3 (d_{\pi} F^{\mp}) \dot \pi^0 \rangle.
$$
(\ref{hiperb}) and (\ref{2.49}) imply
\be
\|N_{\pm}(t)\|_{L^2_t} \les \delta \|Z(t)\|_{L^2_t \dot W^{1/2, 6}_x} + \delta^2.
\ee

To control $P_{im}(t) Z(t)$ we use the following fact, see \cite{schlag}. It characterizes the bounded solution of the system (\ref{hiper}).

\begin{lemma}\lb{hyp}
Let $x(t) = \bpm x_1(t) \\ x_2(t) \epm$ and consider the equation
\be
\dot x - \bpm \sigma(t) & 0 \\ 0 & -\sigma(t)\epm x = f(t),
\ee
where $f(t) \in e^{\rho t} L^p_t(0, \infty)$, $f(t) = \bpm f_1(t) \\ f_2(t) \epm$,  and for all $t$ $\sigma(t) \geq \sigma_0 > \rho \geq 0$.

Then $x(t) \in e^{\rho t} L^p_t(0, \infty)$ if and only if
\be
x_1(0) =- \int_0^{\infty} e^{-\int_0^t \sigma(\tau) \dd \tau} f_1(t) \dd t.
\lb{condi}
\ee
In this case,
$$
|x_1(0)| \les \|f(t)\|_{e^{\rho t} L^p_t(0, \infty)},\ \|x(t)\|_{e^{\rho t} L^p_t(0, \infty)} \les |x_2(0)| + \|f(t)\|_{e^{\rho t} L^p_t(0, \infty)},
$$
and for all $t \geq 0$
$$\begin{aligned}\lb{2.83}
x_1(t) &= -\int_t^{\infty} e^{\int_s^t \sigma(\tau) \dd \tau} f_1(s) \dd s, \\
x_2(t) &= e^{-\int_0^t \sigma(\tau) \dd \tau} x_2(0) + \int_0^t e^{-\int_s^t \sigma(\tau) \dd \tau} f_2(s) \dd s.
\end{aligned}$$
\end{lemma}
\begin{proof}
By Duhamel's formula, any solution will be an explicit linear combination of the exponentially increasing solution and the exponentially decaying solution:
$$\begin{aligned}
x_1(t) &= e^{\int_0^t \sigma(\tau) \dd \tau} \Big(x_1(0) + \int_0^t e^{-\int_0^s \sigma(\tau) \dd \tau} f_1(s) \dd s\Big), \\
x_2(t) &= e^{-\int_0^t \sigma(\tau) \dd \tau} x_2(0) + \int_0^t e^{-\int_s^t \sigma(\tau) \dd \tau} f_2(s) \dd s.
\end{aligned}$$
If $x_1(t)$ remains bounded in $e^{\rho t} L^p_t(0, \infty)$, the expression in parantheses $x_1(0) + \int_0^t e^{-\int_0^s \sigma(\tau) \dd \tau} f_1(s)$ must converge to $0$, hence (\ref{condi}) must hold. Conversely, if (\ref{condi}) holds, then
\be
x_1(t) = -\int_t^{\infty} e^{\int_s^t \sigma(\tau) \dd \tau} f_1(s) \dd s.
\ee
Then $\lim_{t \to \infty} x_1(t) = 0$ and both $x_1$ and $x_2$ are bounded by the convolution of $f_1$, respectively $\delta_0(t) x_2(0) + f_2(t)$, with exponentially decreasing kernels:
$$\begin{aligned}
|x_1(t)| &\les \int_{-\infty}^t e^{-\sigma_0(t-s)} |f_1(s)| \dd s,\\
|x_2(t)| &\les \int_{-\infty}^t e^{-\sigma_0(t-s)} \big(\delta_0(s) |x_1(0)| + |f_2(s)|\big) \dd s.
\end{aligned}$$
Convolution with $e^{-\sigma_0 t}$ is a bounded operation on $e^{\rho t} L^p_t(0, \infty)$, $1 \leq p \leq \infty$, for $|\rho| < \sigma_0$. The conclusion follows.
\end{proof}

$\sigma \in (0, \infty)$ depends on the scaling parameter $\alpha$ by (\ref{2.50}). Then $\sigma(t)$ belongs to a compact subset $[a_1, a_2]$ of $(0, \infty)$, because $\alpha^0(t)$ belongs to a compact subset of $(0, \infty)$. By Lemma \ref{hyp}, equation (\ref{hiper}) has a bounded solution if and only if (\ref{condi}) holds:
\be
h^+(0) = -\int_0^{\infty} e^{-\int_0^t \sigma(\tau) \dd \tau} N_+(t) \dd t.
\lb{conditie}
\ee

Note that $h=h^+(0)$, where $h$ is given by (\ref{Z_initial}) and $h^+(0)$ is by (\ref{conditie}). $R$ is bounded in $L^{\infty}_t \dot H^{1/2}_x$ if and only if $Z$ is bounded. $Z$ is bounded if and only if each of its components is bounded, $P_{im} Z$ in particular. By (\ref{conditie}),  $\|Z\|_{L^{\infty}_t \dot H^{1/2}_x}$ is finite if and only if
\be\lb{hh}
h = -\int_0^{\infty} e^{-\int_0^t \sigma(\tau) \dd \tau} N_+(t) \dd t.
\ee

We next obtain a formula for $h$ that involves $R$ instead of $Z$. Recall that $i\sigma(\tau)$ is the imaginary eigenvalue of $H(W_{\pi^0}(\tau))$. Expanding $N_+$ and reverting the isometry $U$ within (\ref{hh}) leads to the explicit formula that we use later
\be\begin{aligned}\lb{2.100}
h &= -\int_0^{\infty} e^{-\int_0^t \sigma(W_{\pi^0}(\tau)) \dd \tau} \big(\langle F,  \sigma_3 F^-(W_{\pi^0}(t)) \rangle - \\
&- 3 \dot \alpha^0(t) (\alpha^0(t))^{-4} \langle R, i \sigma_3 F^-(W_{\pi^0}(t)) \rangle + \\
&+(\alpha^0(t))^{-3} \langle R, i \sigma_3 (d_{\pi} F^-(W_{\pi^0}(t))) \dot \pi^0(t) \rangle\big) \dd t.
\end{aligned}\ee

Denote this value of $h$ by $h(R_0, R^0, \pi^0)$. By Lemma \ref{hyp},
$$\begin{aligned}
|h(R_0, R^0, \pi^0)| \les \|N_+\|_{L^1_t + L^{\infty}_t} \les \delta \|Z\|_{L^2_t \dot W^{1/2, 6}_x} + \delta^2
\end{aligned}$$
and
\be\begin{aligned}\lb{2.90}
\|P_{im}(t) Z\|_{L^2_t \dot W^{1/2, 6}_x} &\leq \|b_{+}\|_{L^2_t} + \|b_{-}\|_{L^2_t} \\
&\les \|N_+\|_{L^2_t} + \|N_-\|_{L^2_t} + \|R_0\|_{\dot H^{1/2}} \\
&\les \|R_0\|_{\dot H^{1/2}} + \delta \|R\|_{L^2_t \dot W^{1/2, 6}_x} + \delta^2.
\end{aligned}\ee
For $h=h(R_0, R^0, \pi^0)$, $R$ is indeed bounded: putting (\ref{2.68}), (\ref{2.76}), and (\ref{2.90}) together and taking into account the fact that
$$
\|Z\|_{L^{\infty}_t \dot H^{1/2}_x \cap L^2_t \dot W^{1/2, 6}_x} = \|R\|_{L^{\infty}_t \dot H^{1/2}_x \cap L^2_t \dot W^{1/2, 6}_x},
$$
we obtain
$$
\|R\|_{L^2_t \dot W^{1/2, 6}_x} \les \|R_0\|_{\dot H^{1/2}} + \delta \|R\|_{L^2_t \dot W^{1/2, 6}_x} + \delta^2.
$$
Regarding the parameter path $\pi$, from the modulation equations (\ref{ec_liniara}) and (\ref{mod0}) we~get
\be\begin{aligned}\lb{modexp}
|\dot \pi(t)| &\les \langle \alpha^0 \rangle^2 \big(|\langle R(t), d_{\pi} d_{\pi}^* W_{\pi^0}(t))|\, |\dot \pi^0(t)| + |\langle N(R_0(t), W_{\pi^0}(t)), d_{\pi} W_{\pi^0}(t)\rangle| \big) \\
&\les \|R(t)\|_{\dot H^{1/2}_x} |\dot \pi^0(t)| + \|R_0(t)\|_{\dot W^{1/2, 6}_x}^2 (1 + \|R_0(t)\|_{\dot H^{1/2}_x}).
\end{aligned}\ee
Because of the lack of an endpoint Sobolev embedding, we again need to use (\ref{endpoint}).

Thus $\|\dot \pi\|_1 \les \delta \|R\|_{L^{\infty}_t \dot H^{1/2}_x} + \delta^2$. Therefore
$$
\|(R, \pi)\|_X = \|(Z, \pi)\|_X \les \|R_0\|_{\dot H^{1/2}_x} + \delta \|(R, \pi)\|_X + \delta^2.
$$
This proves (\ref{stabineq}), hence the stability of (\ref{stabset}) for small initial data $R_0 \in \dot H^{1/2}$ and for the unique suitable choice of $h = h(R_0, R^0, h^0)$.
\end{proof}

\subsection{The fixed point argument: contraction}\lb{Chapter_2.4}
Choose $\delta>0$ such that the scale $\alpha(t)$ belongs to a fixed compact subset of $(0, \infty)$, so the imaginary eigenvalue $i\sigma$ of $H(W(\pi^0(t))$ fulfills
$$
\sigma \in [a_1, a_2] \subset (0, \infty),
$$
for all $t$ and paths $\pi^0$ that we consider. Fix a constant $\rho$, $0 < \rho < a_1$.

For any two solutions of (\ref{ec_liniara}), $\Epsilon(R_j^0, \pi_j^0) = (R_j, \pi_j) \in X$, $j=1, 2$, such that
$$
\|(R_j^0, \pi_j^0)\|_X \leq \delta,
$$
we prove that $\Epsilon$ acts as a contraction in the following space $Y$:
$$\begin{aligned}
Y:= \{(R, \pi) \mid &\|e^{-t\rho} R(t)\|_{L^{\infty}_t \dot H^{1/2}_x \cap L^2_t \dot W^{1/2, 6}_x} + \|e^{-t\rho} \dot \pi(t)\|_{L^1_t} < \infty \}.
\lb{Y}
\end{aligned}$$
$Y$ is an affine space for fixed $\pi(0)=p_0$.

Furthermore, for fixed $R_0$ we prove that $h=h(R_0, R^0, \pi^0)$, for which the solution with initial data (\ref{Z_initial}) is bounded by Proposition \ref{prop9}, satisfies
$$
|h(R_0, R^0_1, \pi^0_1) - h(R_0, R^0_2, \pi^0_2)| \les \delta \|(R_1^0, \pi_1^0) - (R_2^0, \pi_2^0)\|_Y.
$$
This suffices to complete the contraction argument, see Proposition \ref{prop27}.

In proving the continuous dependence of the solution on the initial data, it is useful to also allow $R_1$ and $R_2$ to start from different initial data. This leads to the the following perturbation/contraction lemma that we use repeatedly in the sequel.

More generally, we prove a comparison result in spaces with polynomial weights. For $0 \leq a<\rho$, define the weights
\be\lb{2.230}
A_n(t) = \sum_{j=0}^n \frac{\langle at \rangle^j}{j!}.
\ee
Each weight $A_n$ is a polynomial of degree $n$; the case $n=0$ is the same as having no weight. $A_n$ have the property that
$$
\sum_{j=0}^n A_j(t) A_{n-j}(t) \leq C^n A_n(t);\ \int_0^t A_n(s) \dd s \les A_{n+1}(t);\ A_n(t) < e^{\langle at \rangle}.
$$
We then define the polynomially weighted spaces $Y_n \subset Y$ of weight $A_n(t)$:
$$\begin{aligned}
Y_n:= \{(R, \pi) \mid &\|R(t)\|_{A_n(t) L^{\infty}_t \dot H^{1/2}_x \cap A_n(t) L^2_t \dot W^{1/2, 6}_x} + \|\dot \pi(t)\|_{A_n(t) L^1_t} < \infty \}.
\lb{Y_n}
\end{aligned}$$
We prove two comparison estimates. One assumes that both solutions scatter, i.e.\ are in $X$, and evaluates the difference in $Y$ or $Y_n$. The second assumes one solution to (\ref{NLS}) scatters and the other is in $L^{\infty}_t \dot H^{1/2}_x$. In the second case, the estimate holds in $e^{t\rho} L^{\infty}_t \dot H^{1/2}_x \times e^{t\rho} L^1_t$.
\begin{lemma}[Contraction]\lb{lemma_5}
Consider two solutions, $(R_1, \pi_1)$ and $(R_2, \pi_2)$, of two distinct linearized equations of the form \eqref{ec_liniara}:
\be\begin{aligned}
&i \partial_t R_j + H_{\pi^0_j}(t) R_j = F_j,\ F_j= -i L_{\pi^0_j} R_j + N(R^0_j, W_{\pi^0_j}) - N_{\pi^0_j}(R^0_j, W_{\pi^0_j})\\
&\dot f_j = 2(\alpha^0_j)^2 \|\phi\|_{\dot H^{1/2}}^{-2} (\langle R_j, (d_{\pi} \partial^*_fW_{\pi^0_j}) \dot \pi^0_j \rangle - i\langle N(R^0_j, W_{\pi^0_j}), \partial^*_fW_{\pi^0_j}\rangle), f \in \{\alpha, \Gamma\}\\
&\dot f_j = \alpha^0_j \|\phi\|_{\dot H^{1/2}}^{-2} (\langle R_j, (d_{\pi} \partial^*_fW_{\pi^0_j}) \dot \pi^0_j \rangle - i\langle N(R^0_j, W_{\pi^0_j}), \partial^*_fW_{\pi^0_j}\rangle), f \in \{v_k, D_k\}
\end{aligned}\lb{ec_liniara12}\ee
for $j = 1, 2$, with initial data
\be
R_j(0) = R_{0j} + h_j F^+(W(p_{0j})),\ \pi_j(0) = \pi^0_j(0)=p_{0j} \text{ given}.
\ee
Assume in addition that
$$\|(R^0_1, \pi^0_1)\|_X \leq \delta,\ \|R^0_2\|_{L^{\infty}_t \dot H^{1/2}_x} \leq \delta,\ \|R_2\|_{L^{\infty}_t \dot H^{1/2}_x} \leq \delta,\ \|\dot \pi^0_2\|_{\infty} \leq \delta,\ \|\dot \pi_2\|_{\infty} \leq \delta,
$$
and $h_j := h(R_{0j}, R^0_j, \pi^0_j)$ are the unique values for which $P_{im} R_j \in L^{\infty}_t \dot H^{1/2}_x$ by Lemma~\ref{hyp}.

If $\delta>0$ is sufficiently small, then
\be\begin{aligned}\lb{comp_h12}
&\|(R_1, \pi_1) - (R_2, \pi_2)\|_{e^{t\rho} L^{\infty}_t \dot H^{1/2}_x \times \partial_t^{-1} e^{t\rho} L^{\infty}_t} \les \\
&\les \delta \|(R_1^0, \pi_1^0) - (R_2^0, \pi_2^0)\|_{e^{t\rho} L^{\infty}_t \dot H^{1/2}_x \times \partial_t^{-1} e^{t\rho} L^{\infty}_t} + \|R_{01}-R_{02}\|_{\dot H^{1/2}} + |p_{01} - p_{02}|, \\
|h_1-h_2| &\les \delta \big(\|(R_1^0, \pi_1^0) - (R_2^0, \pi_2^0)\|_{e^{t\rho} L^{\infty}_t \dot H^{1/2}_x \times \partial_t^{-1} e^{t\rho} L^{\infty}_t} + \|R_{01}-R_{02}\|_{\dot H^{1/2}} + |p_{01} - p_{02}|\big).
\end{aligned}\ee
If in addition $\|(R^0_2, \pi^0_2)\|_X \leq \delta$, hence $\|(R_2, \pi_2)\|_X \leq \delta$, then
\be\begin{aligned}\lb{2.117}
\|(R_1, \pi_1) - (R_2, \pi_2)\|_Y &\les \delta \|(R_1^0, \pi_1^0) - (R_2^0, \pi_2^0)\|_Y + \|R_{01}-R_{02}\|_{\dot H^{1/2}} + |p_{01} - p_{02}|.
\end{aligned}\ee
Moreover, uniformly for all $n \geq 0$,
\be\lb{polywei}\begin{aligned}
\|(R_1, \pi_1) - (R_2, \pi_2)\|_{Y_{n+2}} &\les \delta \|(R_1^0, \pi_1^0) - (R_2^0, \pi_2^0)\|_{Y_n} + \|R_{01}-R_{02}\|_{\dot H^{1/2}} + |p_{01} - p_{02}|.
\end{aligned}\ee
\end{lemma}
$\delta$ and various constants depend on the value of $\rho$ used in the definition of $Y$ (\ref{Y}) and on $p_0$. The permissible choice of $\rho$ is a function of the scaling parameter $\alpha_0$ of $p_0$.

Note that if $(R_j, \pi_j) = \Epsilon(R^0_j, \pi^0_j)$ and $\|(R^0_j, \pi^0_j)\|_X \leq \delta$, then by Proposition \ref{prop9} $\|(R_j, \pi_j)\|_X \leq \delta$ for $j=1, 2$. This is sufficient for the contraction argument to work in Proposition \ref{prop27}.

The more general statement (\ref{comp_h12}) is useful in the proof of Proposition \ref{prop_2.13}.

\begin{observation}
A comparison estimate also exists for different starting solitons, $\pi^0_j(0)=\pi_j(0)=p_{0j}$, $j=1,2$. However, one has to use $H^{1/2}$ instead of $\dot H^{1/2}$ for comparison purposes, because only $H^{1/2}$ is preserved by boost transformations $R(x) \mapsto e^{ivx} R(x)$.
\end{observation}

\begin{proof}[Proof of Lemma \ref{lemma_5}]
$R_j$, $j = 1, 2$, satisfy the equations
\be\lb{rj}
i \partial_t R_j + H_{\pi_j^0}(t) R_j = F_j,
\ee
with initial data
$$
R_j(0) = R_{0j} + h_j F^+(W(p_{0j})).
$$
Furthermore, $R_j$ satisfy the orthogonality conditions
$$
\langle R_j(t), \partial^*_fW_{\pi^0_j}(t) \rangle = 0
$$
at time $t=0$ and thus, due to equation (\ref{ec_liniara12}), at every time $t$.

Let $R = R_1-R_2$, $\pi = \pi_1-\pi_2$. By hypothesis, $\|R\|_{L^{\infty}_t \dot H^{1/2}_x} \les \delta$ and $\pi(0) = 0$. Subtracting the equations (\ref{rj}) of $R_1$ and $R_2$ from one another, we obtain an equation for $R$:
\be\lb{eqnr}
i \partial_t R + H_{\pi^0_1}(t) R = \tilde F,\ \tilde F = F_1 - F_2 - \big(H_{\pi^0_1}(t) - H_{\pi^0_2}(t)\big) R_2.
\ee
Choose the Hamiltonian $H_{\pi_1^0}(t)$ (the choice of $H_{\pi_1^0}(t)$ or $H_{\pi_2^0}(t)$ is arbitrary) and apply the isometry $U(t)$ defined by (\ref{2.63}-\ref{2.67}) to (\ref{eqnr}):
\be
U(t) = e^{\textstyle\int_0^t(2 v_1^0(s) \dl + i ((\alpha_1^0)^2(s)-|v_1^0(s)|^2) \sigma_3) \dd s}.
\ee
Let
\be
Z(t) := U(t) R(t).
\ee
We introduce notations similar to (\ref{2.68}):
\be\begin{aligned}
H(t) &:= H(W(\pi^0_1(t))), &P_0(t) &:= P_0(W(\pi^0_1(t))),\\
P_c(t) &:= P_c(W(\pi^0_1(t))), &P_{im}(t) &:= P_{im}(W(\pi^0_1(t))),\\
P_+(t) &:= P_+(W(\pi^0_1(t))), &P_-(t) &:= P_-(W(\pi^0_1(t))),\\
F^+(t) &:= F^+(W(\pi^0_1(t))), &F^-(t) &:= F^-(W(\pi^0_1(t))),\\
\partial^*_fW(t) &:= \partial^*_fW(\pi^0_1(t)), &\sigma(t) &:= \sigma(W(\pi^0_1(t))).
\end{aligned}\ee
The equation for $Z$ becomes
$$
i \partial_t Z + H(t) Z = U(t) \tilde F.
$$
The initial value $Z(0) = (h_1-h_2) F^+(W(p_{01})) + R_{01}-R_{02} + h_2 (F^+(W(p_{01})) - F^+(W(p_{02})))$ satisfies the bound
$$
\|Z(0)\|_{\dot H^{1/2}} \les |h_1-h_2| + \|R_{01}-R_{02}\|_{\dot H^{1/2}} + \delta |p_{01} - p_{02}|.
$$
Split $Z$ into three parts, according to the Hamiltonian's spectrum:
$$
Z(t) = P_c(t) Z(t) + P_0(t) Z(t) + P_{im}(t) Z(t).
$$
We bound each component of $Z$ as we did in the proof of Proposition \ref{prop9}, but here we use weighted norms in $t$ instead of uniform norms. Another difference is that $P_0(t) Z(t)$ is no longer null, because the orthogonality condition does not hold.

We estimate $\tilde F = F_1 - F_2 - (H_{\pi^0_1}(t) - H_{\pi^0_2}(t)) R_2$ term by term along the lines of (\ref{2.49}). Note that
$$
|\pi^0_1(t) - \pi^0_2(t)| \les |p_{01} - p_{02}| + \min(e^{t\rho} \|\dot \pi\|_{e^{t\rho} L^1_t}, \langle \rho \rangle^{-1} e^{t\rho} \|\dot \pi\|_{L^{\infty}_t}).
$$
Then in any Schwartz seminorm $\mc S_n$
$$\begin{aligned}\lb{expo}
&\|W_{\pi^0_1}(t) - W_{\pi^0_2}(t)\|_{e^{t\rho} L^{\infty}_t (\mc S_n)_x} \les \\
&\les |p_{01} - p_{02}| + \sup_t \Big(e^{-t\rho} |\pi^0_1(t) - \pi^0_2(t)| + e^{-t\rho} \int_0^t |\pi^0_1(s) - \pi^0_2(s)| \dd s\Big) \\
&\les |p_{01} - p_{02}| + \sup_t \min\Big(\|\dot \pi(t)\|_{e^{t\rho} L^1_t} \Big(1 + e^{-t\rho} \int_0^t e^{s\rho} \dd s\Big), \langle \rho^{-1} \rangle \|\dot \pi(t)\|_{e^{t\rho} L^{\infty}_t} \Big(1 + e^{-t\rho} \int_0^t e^{s\rho} \dd s\Big)\Big) \\
&\les |p_{01} - p_{02}| + \min(\langle \rho^{-1} \rangle \|(R_1^0, \pi_1^0) - (R_2^0, \pi_2^0)\|_Y, \langle \rho \rangle^{-2} \|\dot \pi^0_1 - \dot \pi^0_2\|_{e^{t\rho} L^{\infty}_t}).
\end{aligned}$$
Integrating in time preserves the spaces $e^{t\rho} L^p_t$, $1 \leq p \leq \infty$, at the cost of a factor of $\rho^{-1}$:
$$
\Big\|\int_0^t f(s) \dd s \Big\|_{e^{t\rho} L^p_t} \les \langle \rho^{-1} \rangle \|f\|_{e^{t\rho} L^p_t}.
$$
This inequality is equivalent to stating that convolution with $\chi_{t\geq 0} e^{-t\rho}$ is a bounded operation on $L^p$.

When using the polynomial weights $A_n(t)$ instead, integration in $t$ raises $n$ by $1$.

In particular, we obtain that when $\|R_2\|_{L^{\infty}_t \dot H^{1/2}_x} \leq \delta$
$$\begin{aligned}
\|(H_{\pi^0_1}(t) - H_{\pi^0_2}(t)) R_2\|_{e^{t \rho} L^{\infty}_t \dot H^{1/2}_x} &\les \|H_{\pi^0_1}(t) - H_{\pi^0_2}(t)\|_{e^{\rho t} L^{\infty}_t \dot H^{1/2}_x}  \|R_2\|_{L^{\infty}_t \dot H^{1/2}_x} \\
&\les \delta \langle \rho^{-2} \rangle \big(|p_{01} - p_{02}| + \|\dot \pi^0_1 - \dot \pi^0_2\|_{e^{t\rho} L^{\infty}_t}\big).
\end{aligned}$$
If in addition $\|(R_2, \pi_2)\|_X \leq \delta$, then
$$\begin{aligned}
\|(H_{\pi^0_1}(t) - H_{\pi^0_2}(t)) R_2\|_{e^{t \rho} L^2_t \dot W^{1/2, 6/5}_x} &\les \|H_{\pi^0_1}(t) - H_{\pi^0_2}(t)\|_{e^{\rho t} L^{\infty}_t (\dot W^{1/2, 6/5}_x \cap L^{3/2-\epsilon})} \|R_2\|_{L^2_t \dot W^{1/2, 6}_x} \\
&\les \delta \langle \rho^{-1}\rangle \big(|p_{01} - p_{02}| + \|\dot \pi^0_1 - \dot \pi^0_2\|_{e^{t\rho} L^1_t}\big).
\end{aligned}$$
The linear terms $L_{\pi^0_1} R_1 - L_{\pi^0_2} R_2$ satisfy similar inequalities, resulting in
$$
\|L_{\pi^0_1} R_1 - L_{\pi^0_2} R_2\|_{e^{t\rho} L^{\infty}_t \dot H^{1/2}_x} \les \delta \langle \rho^{-2} \rangle \big(\|R_1 - R_2\|_{L^{\infty}_t \dot H^{1/2}_x} + |p_{01} - p_{02}| + \|\dot \pi_1^0 - \dot \pi_2^0\|_{e^{t\rho} L^{\infty}_t}\big).
$$
Assuming that $\|(R^0_2, \pi^0_2)\|_X \leq \delta$, one has
$$\begin{aligned}
&\|L_{\pi^0_1} R_1 - L_{\pi^0_2} R_2\|_{e^{t\rho} L^2_t \dot W^{1/2, 6/5}_x} \les \\
&\les \delta \langle \rho^{-1} \rangle \big(\|(R_1, \pi_1) - (R_2, \pi_2)\|_Y + \|(R^0_1, \pi^0_1) - (R^0_2, \pi^0_2)\|_Y + |p_{01} - p_{02}|\big).
\end{aligned}$$
Evaluate the cubic terms through the following identity:
$$\begin{aligned}
r_1^2 \ov r_1 - r_2^2 \ov r_2 &= r_1^2 (r_1 - r_2) + 2|r_1|^2 (r_1 - r_2) - 2r_1 |r_1 - r_2|^2 - (r_1 - r_2)^2 \ov r_1 \\
&+ |r_1 - r_2|^2 (r_1 - r_2).
\end{aligned}$$
Then
$$
\|r_1 |r_1 - r_2|^2\|_{e^{t\rho} L^2_t \dot W^{1/2, 6/5}_x} \les \|r_1\|_{L^{\infty}_t \dot H^{1/2}_x} \big\|(r_1-r_2)(\ov r_1 - \ov r_2)\big\|_{e^{t\rho} L^2_t \dot H^{1/2}_x}.
$$
Then note that
$$\begin{aligned}
\|f g\|_{L^2_t \dot H^{1/2}_x} &\les \|(f+g)^2\|_{L^2_t \dot H^{1/2}_x} + \|f^2\|_{e^{t\rho} L^2_t \dot H^{1/2}_x} + \|g^2\|_{e^{t\rho} L^2_t \dot H^{1/2}_x} \\
& \les \|f+g\|_{e^{t\rho/2} L^4_t \dot W^{1/2, 3}_x}^2 + \|f\|_{e^{t\rho/2}L^4_t \dot W^{1/2, 3}_x}^2 + \|f\|_{e^{t\rho/2}L^4_t \dot W^{1/2, 3}_x}^2 \\
& \les (\|f\|_{L^{\infty}_t \dot H^{1/2}_x} + \|g\|_{L^{\infty}_t \dot H^{1/2}_x}) (\|f\|_{e^{t\rho} L^2_t \dot W^{1/2, 6}_x} + \|g\|_{e^{t\rho} L^2_t \dot W^{1/2, 6}_x}).
\end{aligned}$$
The other terms compute in the same manner. For example,
$$\begin{aligned}
w_{\pi_1^0} |r_1^0|^2 - w_{\pi_2^0} |r_2^0|^2 &= (w_{\pi_1^0} - w_{\pi_2^0}) |r_1^0|^2 + w_{\pi_2^0} (r_1^0 - r_2^0) \ov{r_1^0} + w_{\pi_2^0} r_1^0 (\ov r_1^0 - \ov r_2^0) \\
& - w_{\pi_2^0} |r_1^0 - r_2^0|^2.
\end{aligned}$$
We treat each component in the manner demonstrated above, i.e.
$$\begin{aligned}
\|w_{\pi_2^0} (r_1^0 - r_2^0) \ov{r_1^0}\|_{e^{t\rho} L^2_t \dot W^{1/2, 6/5}_x} &\les \|w_{\pi_2^0} \ov{r_1^0}\|_{L^2_t \dot H^{1/2}_x} \|e^{-t\rho} (r_1^0 - r_2^0)\|_{L^{\infty}_t \dot H^{1/2}_x} \\
&\les \delta^2 \|r_1^0 - r_2^0\|_{e^{t\rho} L^{\infty}_t \dot H^{1/2}_x}.
\end{aligned}$$
The nonlinear terms then satisfy the bound
$$\begin{aligned}
\big\|N(R_1^0, \pi_1^0) - N_{\pi_1^0}(R_1^0, \pi_1^0) - \big(N(R_2^0, \pi_2^0) - N_{\pi_2^0}(R_2^0, \pi_2^0)\big)\big\|_{e^{t\rho} L^2_t \dot W^{1/2, 6/5}_x} \les \\
\les \delta^2 \langle \rho \rangle^{-2} (\|R_1^0 - R_2^0\|_{e^{t\rho} L^{\infty}_t \dot H^{1/2}_x} + \|\dot \pi_1^0 - \dot \pi_2^0\|_{e^{\rho t} L^{\infty}_t} + |p_{01} - p_{02}|).
\end{aligned}$$
The conclusion is that
$$\begin{aligned}
&\|\tilde F\|_{e^{t\rho} L^2_t \dot W^{1/2, 6/5}_x + e^{t\rho} L^{\infty}_t \dot H^{1/2}_x} \les \\
&\les \delta \langle \rho^{-2} \rangle \big(\|R\|_{e^{t\rho} L^{\infty}_t \dot H^{1/2}_x} + \|R_1^0 - R_2^0\|_{e^{t\rho} L^{\infty}_t \dot H^{1/2}_x} + \|\dot \pi_1^0 - \dot \pi_2^0\|_{e^{t\rho} L^1_t} + |p_{01} - p_{02}|\big)
\end{aligned}$$
as well as
\be\begin{aligned}
\|\tilde F\|_{e^{t\rho} L^2_t \dot W^{1/2, 6/5}_x} \les \delta \langle \rho^{-1} \rangle \big(\|(R, \pi)\|_Y + \|(R_1^0, \pi_1^0) - (R_2^0, \pi_2^0)\|_Y + |p_{01} - p_{02}|\big).
\lb{2.79}
\end{aligned}\ee

The continuous spectrum projection $P_c(t) Z$ has the equation
$$\begin{aligned}
i\partial_t(P_c(t) Z) + H(t) P_c(t) Z &= P_c(t) U(t) \tilde F + i (\partial_t P_c(t)) Z.
\end{aligned}$$
By Corollary \ref{cor_exp}, we obtain an exponentially weighted Strichartz estimate:
%
\be\begin{aligned}
&\|P_c(t) Z\|_{e^{t\rho} L^{\infty}_t \dot H^{1/2}_x} \les \\
&\les \|P_c(0) Z(0)\|_{\dot H^{1/2}} + \langle \rho \rangle^{-1} \|\tilde F\|_{e^{t\rho} L^2_t \dot W^{1/2, 6/5}_x + e^{t\rho} L^{\infty}_t \dot H^{1/2}_x} +  \|(\partial_t P_c(t)) Z\|_{e^{t\rho} L^1_t \dot H^{1/2}_x} \\
&\les \|R_{01}-R_{02}\|_{\dot H^{1/2}} + \delta \langle \rho^{-3} \rangle \big(|p_{01} - p_{02}| + \|R\|_{e^{t\rho} L^{\infty}_t \dot H^{1/2}_x} + \|R_1^0 - R_2^0\|_{e^{t\rho} L^{\infty}_t \dot H^{1/2}_x} +  \|\dot \pi_1^0 - \dot \pi_2^0\|_{e^{t\rho} L^1_t}\big).
\lb{2.71}
\end{aligned}\ee
There is no contribution of $h_1-h_2$, since $P_c(0) Z(0)$ does not depend on it.

If $\|(R^0_2, \pi^0_2)\|_X \leq \delta$, then likewise
$$
\|P_c(t) Z\|_{e^{t\rho} L^{\infty}_t \dot H^{1/2}_x} \les \|R_{01}-R_{02}\|_{\dot H^{1/2}} + \delta \langle \rho^{-2} \rangle \big(|p_{01} - p_{02}| + \|(R, \pi)\|_Y + \|(R_1^0, \pi_1^0) - (R_2^0, \pi_2^0)\|_Y\big).
$$


Taking the difference of the orthogonality conditions satisfied by $R_1$ and $R_2$ leads to
\be
\langle R_1(t) - R_2(t), \partial^*_fW_{\pi^0_1}(t) \rangle = \langle R_2(t), \partial^*_fW_{\pi^0_2}(t) - \partial^*_fW_{\pi^0_1}(t) \rangle.
\lb{96}
\ee
Applying the isometry $U(t)$ to the left-hand side of (\ref{96}), one obtains that
$$
\langle Z(t), \partial^*_fW(t) \rangle \equiv \langle R_1(t) - R_2(t), \partial^*_fW_{\pi^0_1}(t) \rangle.
$$
Hence, according to whether $(R_2, \pi_2) \in X$,
\be\lb{2.75}\begin{aligned}
\|P_0(t) Z(t)\|_{e^{t\rho} L^{\infty}_t \dot H^{1/2}_x} &\les \delta \langle \rho^{-2} \rangle (|p_{01} - p_{02}| + \|\dot \pi_1^0 - \dot \pi_2^0\|_{e^{t\rho} L^1_t}),\\
\|P_0(t) Z(t)\|_{e^{t\rho} L^{\infty}_t \dot H^{1/2}_x \cap e^{t\rho} L^2_t \dot W^{1/2, 6/5}_x} &\les \delta \langle \rho^{-1} \rangle (|p_{01} - p_{02}| + \|(R_1^0, \pi_1^0) - (R_2^0, \pi_2^0)\|_Y).
\end{aligned}\ee
Lemma \ref{hyp} applies to the imaginary component $P_{im}(t) Z(t)$, because $P_{im}(t) Z(t) \in L^{\infty}_t \dot H^{1/2}_x$. 
Let $P_{im}(t) Z(t) = h^+(t) F^+(t) + h^-(t) F^-(t)$, see Section \ref{spectru}. Then
$$\begin{aligned}
\partial_t h^{\pm} &= \pm \sigma(t) b^{\pm} - \langle U \tilde F, \sigma_3 F^{\mp} \rangle - \\
& -3 \dot \alpha^0 (\alpha^0)^{-4} \langle Z, i \sigma_3 F^{\mp} \rangle +  (\alpha^0)^{-3} \langle Z, i \sigma_3 (d_{\pi} F^{\mp}) \dot \pi^0 \rangle.
\end{aligned}$$
Thus $h^+$ and $h^-$ solve the system of equations
$$
\partial_t  \bpm h^-\\ h^+\epm + \bpm \sigma(t) & 0 \\ 0 & -\sigma(t)\epm \bpm h^-\\ h^+\epm = \bpm N_-(t) \\ N_+(t) \epm,
$$
where
$$
N_{\pm} = -\langle U \tilde F, \sigma_3 F^{\mp} \rangle -3 \dot \alpha^0_1 (\alpha^0_1)^{-4} \langle Z, i \sigma_3 F^{\mp} \rangle +  (\alpha^0_1)^{-3} \langle Z, i \sigma_3 (d_{\pi} F^{\mp}) \dot \pi^0_1 \rangle.
$$
Thus
$$
\|N_{\pm}\|_{e^{t\rho} L^{\infty}_t} \les \delta \langle \rho^{-2} \rangle \big(\|R\|_{e^{t\rho} L^{\infty}_t \dot H^{1/2}_x} + \|\dot \pi\|_{e^{t\rho} L^1_t} + \|R_1^0 - R_2^0\|_{e^{t\rho} L^{\infty}_t \dot H^{1/2}_x} +  \|\dot \pi_1^0 - \dot \pi_2^0\|_{e^{t\rho} L^1_t}\big).
$$
By Lemma \ref{hyp},
$$\begin{aligned}
|h_1-h_2| = |h^+(0)| &\les
\delta \langle \rho^{-1} \rangle \big(\|(Z_1^0, \pi_1^0) - (Z_2^0, \pi_2^0)\|_Y+ \|Z\|_{e^{t\rho} \dot H^{1/2}_x}\big)
\end{aligned}$$
as well as
$$\begin{aligned}
|h_1-h_2| &\les
\delta \langle \rho^{-2} \rangle \big(\|R_1^0 - R_2^0\|_{L^{\infty}_t \dot H^{1/2}_x} + \|\dot \pi_1^0 - \dot \pi_2^0\|_{e^{t\rho} L^1_t} + \|R\|_{L^{\infty}_t \dot H^{1/2}_x} + \|\dot \pi\|_{e^{t\rho} L^1_t}\big).
\end{aligned}$$
Also, according to whether $(R_0^2, \pi_0^2) \in X$,
\be\begin{aligned}
\|P_{im} Z\|_{e^{t\rho} L^{\infty}_t \dot H^{1/2}_x} &\les \delta \langle \rho ^{-2} \rangle (|p_{01} - p_{02}| + \|R_1^0 - R_2^0\|_{L^{\infty}_t \dot H^{1/2}_x} + \|\dot \pi_1^0 - \dot \pi_2^0\|_{e^{t\rho} L^1_t} + \|R\|_{L^{\infty}_t \dot H^{1/2}_x} + \|\dot \pi\|_{e^{t\rho} L^1_t}),\\
\|P_{im} Z\|_{Y} &\les \delta \langle \rho^{-1} \rangle (|p_{01} - p_{02}| + \|(Z_1^0, \pi_1^0) - (Z_2^0, \pi_2^0)\|_Y+ \|(R, \pi)\|_Y).
\lb{2.77}
\end{aligned}\ee

Finally, we compare the modulation equations (\ref{ec_liniara12}) and evaluate the difference term by term as in (\ref{modexp}). $\dot \pi(t)$ on the left-hand side is bounded by terms of size $\int_0^t |\pi(s)| \dd s$ on the right-hand side. Handling both kinds of terms in the same space requires exponential weights, since an inequality of the form
$$
\Big\|\int_0^t f(s) \dd s\Big\|_B \les \|f\|_B
$$
holds only in exponentially weighted spaces $B$ such as $e^{t\rho} L^p_t$, $\rho>0$. Up to here, we could have used polynomial weights instead of exponential weights in the argument, but this makes exponential weights necessary to close the loop.

In spaces with polynomial weights, integrating twice in $t$ gains two powers of $t$. On the scale of $A_n$ weights defined by (\ref{2.230}), when the right-hand side is in $Y_n$, the left-hand side is in $Y_{n+2}$.

Then $\pi = \pi_1 - \pi_2$ fulfills
$$
\|\dot \pi\|_{e^{t\rho} L^{\infty}_t} \les \delta \langle \rho^{-2} \rangle \big(\|R_1^0 - R_2^0\|_{e^{t\rho} L^{\infty}_t \dot H^{1/2}_x} + \|\dot \pi_1^0 - \dot \pi_2^0\|_{e^{t\rho} L^{\infty}_t \dot H^{1/2}_x} + \|R_1 - R_2\|_{e^{t\rho} L^{\infty}_t \dot H^{1/2}_x}\big)
$$
as well as
\be\begin{aligned}
\|\dot \pi\|_{e^{t\rho} L^1_t} &\les \delta \langle \rho^{-2} \rangle \big(\|(Z_1^0, \pi_1^0) - (Z_2^0, \pi_2^0)\|_Y + \|(Z, \pi)\|_Y\big).
\lb{2.78}
\end{aligned}\ee

Putting (\ref{2.71}), (\ref{2.75}), (\ref{2.77}), and (\ref{2.78}) together we obtain (\ref{2.117}). Alternately, performing the same computation with polynomial weights leads to (\ref{polywei}) and, absent the assumption that $(R^0_2, \pi^0_2) \in X$, to (\ref{comp_h12}).
\end{proof}



\subsection{Invariance and analyticity of $\mc N$}
Take a small parameter $\delta_0>0$ and~let
$$\begin{aligned}
\mc N_{lin}(W) &= \{R_0 \in (P_c(W) + P_-(W)) \dot H^{1/2} \mid \|R_0\|_{\dot H^{1/2}} < \delta_0 \}.
\end{aligned}$$
We formally conclude the contraction argument.

\begin{proposition}[Asymptotical stability]\lb{prop27} For each soliton $W_0=W(\pi_0) \in \Sol$ there exists $\delta_0 = \delta_0(\alpha_0)$ such that there is a map $h: \mc N_{lin}(W_0) \to \R$ such that
\begin{enumerate}
\item $h$ is locally Lipschitz continuous in $R$,
\item $|h(R_0, W_0)| \les \alpha_0 \|R_0\|_{\dot H^{1/2}}^2$, where $\alpha_0$ is the scaling parameter of $W_0$,
\end{enumerate}
and initial data
\be
\Psi(0) := \mc F_{W_0}(R_0) = W_0 + R_0 + h(R_0, W_0) F^+(W_0)
\ee
gives rise to an asymptotically stable solution $\Psi$ to \eqref{NLS} with $\Psi(0) = \mc F(R_0, W_0)$ and $\Psi(t) = W_{\pi}(t) + R(t)$.\\
$W_{\pi}(t)$ is a moving soliton with $W_{\pi}(0) = W_0$, parametrized as in (\ref{1.1}) by a path $\pi$ such~that
$$
\|\dot \pi\|_1 \les \alpha_0 \|R_0\|_{\dot H^{1/2}}^2.
$$
$R$ has initial data $R(0) = R_0 + h(R_0, W_0) F^+(W_0)$ and is in the endpoint Strichartz space
$$
\|R\|_{L^{\infty}_t \dot H^{1/2}_x \cap L^2_t \dot W^{1/2, 6}_x} \les  \delta.
$$
\end{proposition}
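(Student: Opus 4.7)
The plan is to assemble the proposition from the ingredients already proved: the stability estimate of Proposition \ref{prop9}, the contraction estimate of Lemma \ref{lemma_5}, the explicit formula (\ref{2.100}) for $h$, and the equivalence in Lemma \ref{lemma1.1} between fixed points of $\Upsilon$ and solutions of (\ref{NLS}).

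First I fix the soliton $W_0 = W(\pi_0)$ and data $R_0 \in \mc N_0(W_0)$ with $\|R_0\|_{\dot H^{1/2}} < \delta_0$. I set $\delta = c\,\|R_0\|_{\dot H^{1/2}} + \delta_0$ with $c$ and $\delta_0$ chosen small enough for both Proposition \ref{prop9} and Lemma \ref{lemma_5} to apply. Starting from $(R^0_0, \pi^0_0) = (0, \pi_0)$, I iterate $\Upsilon$ in the complete metric space induced by the seminorm on $Y$ restricted to pairs with the prescribed initial data $(R_0, \pi_0)$ and with $\|(R^0_n, \pi^0_n)\|_X \leq \delta$. Proposition \ref{prop9} guarantees that each iterate remains in the ball $\{\|\cdot\|_X \leq \delta\}$ and produces a well-defined $h_n = h(R_0, R^0_n, \pi^0_n)$; Lemma \ref{lemma_5} (specialized to $R_{01} = R_{02} = R_0$) shows $\Upsilon$ is a $Y$-contraction with ratio $C\delta < 1/2$ and that the scalars $h_n$ form a Cauchy sequence. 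The common limit $(R, \pi)$ is thus a fixed point of $\Upsilon$, and $h(R_0, W_0) := \lim h_n$ is the unique associated scalar.

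Next I pass from the fixed point to a bona fide solution of (\ref{NLS}). By Lemma \ref{lemma1.1} any fixed point of $\Upsilon$ produces, via $\Psi = W_{\pi} + R$, a weak solution of (\ref{NLS}) on every $[0,T]$, hence globally by iteration since the $X$-bound is uniform in time. The Strichartz bound $\|R\|_{L^{\infty}_t \dot H^{1/2}_x \cap L^2_t \dot W^{1/2,6}_x} \leq C\delta$ and $\|\dot\pi\|_1 \leq C\alpha_0\delta$ are read off from Proposition \ref{prop9} applied at the fixed point. Scattering of $R$ and convergence of $\pi$ as $t \to \infty$ follow from the membership of $R$ in the critical Strichartz space together with $\dot\pi \in L^1_t$, as in the standard asymptotic-completeness argument.

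The quadratic bound on $h$ is the main quantitative point, and I would extract it from the explicit formula (\ref{2.100}) evaluated at the fixed point, i.e.\ with $R^0 = R$ and $\pi^0 = \pi$. Because $N(R, W_\pi)$ is quadratic-cubic in $r$, and the remaining two integrands contain a factor $\dot\alpha^0$ or $\dot\pi^0$ already linear-small, one gets
\[
|h(R_0, W_0)| \leq C\alpha_0\Big(\|R\|_{L^2_t \dot W^{1/2,6}_x}^2 + \|\dot\pi\|_1 \|R\|_{L^\infty_t \dot H^{1/2}_x}\Big) \leq C\alpha_0 \|R_0\|_{\dot H^{1/2}}^2,
\]
once the Strichartz bounds from Proposition \ref{prop9} (now sharpened to $\|R\|_X \leq C\|R_0\|_{\dot H^{1/2}}$ because the inhomogeneous contribution $\delta^2$ comes only from the nonlinear terms, which are themselves controlled by $\|R\|_X^2$) are inserted. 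The scaling factor $\alpha_0$ comes from the prefactor in the formula for $P_{\pm}$ and matches the scaling invariance noted right after Proposition \ref{prop9}.

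Finally, Lipschitz continuity of $h(\cdot, W_0)$ follows directly from the second inequality in (\ref{2.117}) applied at two fixed points with the same initial soliton but distinct $R_{0,j}$; Lipschitz continuity in the second variable follows from the variant of Lemma \ref{lemma_5} formulated in the remark at the end of Section \ref{Chapter_2.4}, where a symmetry transformation $\frak g$ first identifies the two starting solitons and then the same comparison argument applies. The universality of $\delta_0$ across solitons is secured by the scaling invariance of the $X$ and $Y$ seminorms and the explicit $\alpha_0$-dependence recorded in Proposition \ref{prop9}. The main obstacle I anticipate is book-keeping the $\alpha_0$-weights throughout so that the conclusion is truly uniform over the eight-dimensional soliton manifold; everything else is a direct synthesis of the lemmas already proved.
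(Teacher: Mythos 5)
Your proposal is correct and follows essentially the same route the paper intends: the paper presents this proposition as a summary of the contraction argument already carried out (Proposition \ref{prop9} for stability in $X$, Lemma \ref{lemma_5} for contraction in $Y$, the formula (\ref{2.100}) for $h$), with $\delta$ chosen commensurate with $\|R_0\|_{\dot H^{1/2}}$ so that the bound $C\alpha_0(\delta\|R_0\|_{\dot H^{1/2}}+\delta^2)$ becomes the stated quadratic bound, and Lipschitz continuity read off from (\ref{2.117}) together with the symmetry-transformation remark for varying $W_0$. The only minor caveat is that your seed $(0,\pi_0)$ for the iteration does not itself satisfy the initial-data condition (\ref{Z_initial}); one should start instead from any admissible pair in the invariant ball (e.g.\ the bounded solution of the homogeneous linearized equation), which does not affect the argument.
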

Consider an asymptotically stable solution of (\ref{NLS}) obtained by Proposition \ref{prop27}. At a soliton $W_0=W(p_0)$, such solutions have initial values of the form
$$
\Psi = W_0 + R_0 + h(R_0, W_0) F^+(W_0).
$$
$F^+(W_0)$ is the normalized eigenvector of $H(W_0)$ for the eigenvalue $i\sigma(W_0)$; see Section~\ref{spectru}. $R_0$ belongs to the codimension-nine affine subspace $\mc N_{lin}(W_0)$ of $\dot H^{1/2}$ 
and $h(R_0, W_0)$ is defined by Proposition \ref{prop27}.

For sufficiently small $\delta_0 > 0$, by Lemma \ref{lemma_10} $\|R_0\|_{\dot H^{1/2}}$ is comparable to the distance from $\Psi(0)$ to $\Sol$
\be
\min_{W \in \Sol} \|\Psi(0)-W\|_{\dot H^{1/2}}.
\ee
Thus we can substitute one for the other in the statement of Proposition \ref{prop27}.

\begin{proof}[Proof of Proposition \ref{prop27}]
Take $\delta$ proportional to $\|R_0\|_{\dot H^{1/2}}$, such that $0<\delta<\delta_0$. From $(R_0(t), \pi_0(t)):=(0, p_0)$, recursively define the sequence
$$
(R_{n+1}, \pi_{n+1}) := \Epsilon(R_n, \pi_n)
$$
with initial data $R(0) = R_0 + h(R_0, R_n, \pi_n) F^+(W_0)$.

By Proposition \ref{prop9}, $\|(R_{n+1}, \pi_{n+1})\|_X \les \delta$ for every $n \geq 0$. Then, by Lemma \ref{lemma_5}, for sufficiently small $\delta$
$$\begin{aligned}
\|(R_{n+1}, \pi_{n+1}) - (R_n, \pi_n)\|_Y + |h(R_0, R_{n+1}, \pi_{n+1}) - h(R_0, R_n, \pi_n)| \leq \\
\leq \frac 1 2 \|(R_n, \pi_n) - (R_{n-1}, \pi_{n-1})\|_Y.
\end{aligned}$$
$(R_n, \pi_n)$ and the associated parameters $h_n: = h(R_0, R_n, \pi_n)$ form Cauchy sequences in $Y$, respectively in $\C$.

Let $(R, \pi):=\lim_{n \to \infty} (R_n, \pi_n)$ in $Y$. Then $(R, \pi)$ is a fixed point of $\Epsilon$ and, by Lemma \ref{lemma1.1}, a solution to (\ref{NLS}) with the specified initial data, weakly and locally in time, thus strongly and globally also. By passing to the strong $Y$ limit, hence to the weak limit in~$X$,
$$
\|(R, \pi)\|_X \leq \limsup_n \|(R_n, \pi_n)\|_X \leq \delta.
$$
This follows on every finite time interval $[0, T]$ and then in the limit on $[0, \infty)$.

Let $h = h(R_0, W_0) := \lim_{n \to \infty} h_n$. By passing to the limit in
$$
P_+(W_0) R_n(0) = h(R_0, R_n, \pi_n) F^+(W_0),
$$
we get that $P_+(W_0) R(0) = h(R_0, R, \pi) F^+(W_0)$. Thus $h=h(R_0, R, \pi)$.
\end{proof}

At this point we define the centre-stable manifold $\mc N$. Let $\delta_0$ be given by Proposition~\ref{prop27}.
\begin{definition}\lb{def3} Let $\Sol$ be the eight-dimensional soliton manifold and for $W \in \Sol$
\be\begin{aligned}
\mc N_{lin}(W) &:= \{R_0 \in (P_c(W) + P_-(W)) \dot H^{1/2} \mid \|R_0\|_{\dot H^{1/2}} < \delta_0 \} \\
\mc N(W) &:= \{W + R_0 + h(R_0, W) F^+(W) \mid R_0 \in \mc N_{lin}(W)\} \\
\mc N &:= \bigcup_{W \in \Sol} \mc N(W).
\end{aligned}\ee
\end{definition}
Note that solutions $\Psi$ defined by Proposition \ref{prop27} are exactly those with $\Psi(0) \in \mc N$.

For each fixed soliton $W_0 \in \Sol$, $\mc N_{lin}(W_0)$ is a codimension-nine closed linear subspace of $\dot H^{1/2}$ and $\mc N(W_0)$ is its image under the mapping
\be
\mc F_{W_0}(R_0) := W_0 + R_0 + h(R_0, W_0) F^+(W_0).
\lb{2.105}
\ee
Below we show that $\mc F_{W_0}$ is real-analytic, hence defines a real-analytic structure for $\mc N(W_0)$.

More generally, consider the exponential map $\exp_{W_0}:T_{W_0} \Sol \to \Sol$ and let $W_{R_0} := \exp_{W_0}(P_0(R_0))$. Define, for $\|R_0\|_{\dot H^{1/2}} \leq \delta_0 << 1$,
\be\lb{local_chart}
\mc F_{W_0}(R_0) := W_{R_0} + (I - P_0(W_{R_0})) R_0 + h(\big(P_c(W_{R_0}) + P_-(W_{R_0})\big) R_0, W_{R_0}) F^+(W_{R_0}).
\ee
Since $h(R_0, W_0)$ is real-analytic, so is $\mc F_{W_0}$. Furthermore, $d\mc F_{W_0}(0) = I$, because
$$\begin{aligned}
\mc F_{W_0}(R_0) &= W_0 + dW_0\, P_0(R_0) + (I - P_0(W_0)) R_0 + O(\|R_0\|_{\dot H^{1/2}}^2) \\
&= W_0 + R_0 + O(\|R_0\|_{\dot H^{1/2}}^2),
\end{aligned}$$
with quadratic corrections. Thus, $\mc F_{W_0}$ is locally invertible at $0 \in \dot H^{1/2}$.

$\mc N$ is locally the $\mc F_{W_0}$ image of the set $\{\|R_0\|_{\dot H^{1/2}} \leq \delta_0 \mid P_+(W_{R_0}) R_0 = 0\}$, which has a natural real-analytic manifold structure. Indeed, let
$$
P_+(W_{R_0}) R_0 = \langle R_0, i \sigma_3 F^-(W_{R_0}) \rangle F^+(W_{R_0}) := f(R_0) F^+(W_{R_0}).
$$
$f(R_0)=\langle R_0, i \sigma_3 F^-(W_{R_0}) \rangle$ is an analytic function. Its differential at $0 \in \dot H^{1/2}$ is $df(0)\, R = \langle R, i \sigma_3 F^-(W_0) \rangle$. Then, for $\epsilon <<1$ the set $\{\|R_0\|_{\dot H^{1/2}} \leq \delta \mid f(R_0) = \epsilon\}$ is a real-analytic manifold.

This shows both that $\mc N$ is real-analytic and that it is analytically embedded into~$\dot H^{1/2}$, with local coordinate charts $\mc F_{W_0}$ given by (\ref{local_chart}) on $\{\Psi \in \dot H^{1/2} \mid \|\Psi - W_0\|_{\dot H^{1/2}} \les \delta\}$.




Applying Lemma \ref{lemma_5} to solutions of (\ref{NLS}) obtained by Proposition \ref{prop27} leads to this:
\begin{proposition}\lb{proposition_10}[Continuous dependence on initial data] For $W_0 \in \Sol$, consider solutions $\Psi_1$ and $\Psi_2$ to (\ref{NLS}) given by Proposition \ref{prop27}, such that
$$
\|\Psi_1(0) - W_0\|_{\dot H^{1/2}} < \delta(W_0), \|\Psi_2(0) - W_0\| < \delta(W_0).
$$
Then
$$\begin{aligned}
\|\Psi_1-\Psi_2\|_{e^{t\rho} L^{\infty}_t \dot H^{1/2}_x \cap e^{t\rho} L^2_t \dot W^{1/2, 6/5}_x} \les \|\Psi_1(0)-\Psi_2(0)\|_{\dot H^{1/2}}
\end{aligned}$$
and
$$\begin{aligned}
&|h(R_{01}, W_{02}) - h(R_{02}, W_{02})| \les \big(d_{\dot H^{1/2}}(\Psi_1(0), \Sol) + d_{\dot H^{1/2}}(\Psi_2(0), \Sol)\big) \|\Psi_1(0)-\Psi_2(0)\|_{\dot H^{1/2}}.
\end{aligned}$$
\end{proposition}
%
The continuous dependence of solutions on the fiber, i.e.\ on $W_0$, is given by symmetry transformations for $H^{1/2}$ initial values.

In $\dot H^{1/2}$ boost is not a symmetry, so the dependence on initial data is given by symmetries only for five out of the eight modulation parameters. Proposition \ref{proposition_10} holds across fibers, with a constant that depends on $P[W_0]$.

\begin{proof} By Proposition \ref{prop27}, let $\Psi_j(t) = W_{\pi_j}(t) + R_j(t)$. Lemma \ref{lemma_5} yields
$$
\|(R_1, \pi_1) - (R_2, \pi_2)\|_Y \les \|R_{01}-R_{02}\|_{\dot H^{1/2}} + |\pi_{01} - \pi_{02}|,
$$
which implies the first conclusion, and
$$\begin{aligned}
|h(R_{01}, W_{01}) - h(R_{02}, W_{02})| \les \delta \big(\|R_{01}-R_{02}\|_{\dot H^{1/2}} + |p_{01} - p_{02}|\big).
\end{aligned}$$
Taking $\delta$ proportional to $\|R_{01}\|_{\dot H^{1/2}} + \|R_{02}\|_{\dot H^{1/2}}$ leads to the second conclusion.

By Lemma \ref{lemma_10}, $\|R_{01}-R_{02}\|_{\dot H^{1/2}} + |p_{01} - p_{02}|$ is comparable to $\|\Psi_1(0) - \Psi_2(0)\|_{\dot H^{1/2}}$.
\end{proof}

In particular, $\mc F$ defined by (\ref{2.105}) is locally Lipschitz continuous. In the sequel we establish the analiticity of $\mc F$.
\begin{definition}\lb{ana}
Given two Banach spaces $A$ and $B$, a map $f:A \to B$ is analytic if it admits a Taylor series expansion:
$$
f(a)=f_0 + f_1(a) + f_2(a, a) + \ldots,
$$
such that each $f_n$ is $n$-linear and there exists $C_1$ such that $\|f_n\|_{\mc B(A^{\otimes n}, B)} \les C_1^n$.
\end{definition}
With no loss of generality, $f_n$ are symmetric. Similar definitions work for differentiable, $C^n$, or smooth maps. Considering the Taylor series expansion of $f$
$$
f(a) = \sum_{n=0}^{\infty} d^n f(a_0)(a-a_0),
$$
note that $f_n(a_0) = \frac 1 {n!} d^n f(a_0)$.

The composition of analytic functions is analytic. Thus, one can define analytic maps on analytic manifolds by means of analytic local charts.

To show analyticity, we use the following characterization:
\begin{lemma} $f:A \to B$ is analytic if and only if there exist $n$-linear functions $f_n:A^{\otimes n} \to B$ such that, uniformly for all $n$,
$$
\Big\|f(a)-\sum_{n=0}^{\infty} f_n(a, \ldots, a)\Big\|_B \les \|a\|_A^{n+1}.
$$
\end{lemma}
\begin{proof} Apply Definition \ref{ana}.
\end{proof}

\begin{lemma} The map $\tilde F: \mc N_{lin} \times \R \to \dot H^{1/2}$,
\be
\tilde F(W, R, h) = W + R + h F^+(W),
\ee
is locally a real analytic diffeomorphism at each point where $h=0$, i.e.\ $(W, R, 0)$.
\lb{lemma_11}\end{lemma}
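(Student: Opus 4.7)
The plan is to apply the real analytic inverse function theorem in Banach spaces; this requires checking that $\tilde F$ is real analytic and that its differential at each point $(W, R, 0)$ is a linear topological isomorphism.

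\emph{Analyticity.} First I trivialize $\mc N_0$ near $(W, R)$ via the map $(R_0, \pi) \mapsto (\frak g_{W(\pi)} R_0, W(\pi))$ described just after Definition \ref{def3}, so that $\tilde F$ becomes $(R_0, \pi, h) \mapsto W(\pi) + \frak g_{W(\pi)} R_0 + h F^{+}(W(\pi))$ on an open subset of $(P_c + P_-) \dot H^{1/2} \times \set R^8 \times \set R$ (where the fixed projections refer to a nearby base soliton). Linearity in $R_0$ and $h$ is trivial. The dependence on $\pi = (\alpha, \Gamma, v, D)$ reduces, via the explicit formulas (\ref{w0}) and (\ref{2.50}), to the operators $e^{i(\Gamma + v \cdot x)\sigma_3}$, $\Dil_{\alpha}$, and $e^{D \cdot \dl}$ applied to the fixed functions $\phi$ and $F^{\pm}$. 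The phase factor is entire in $(\Gamma, v)$; rescaling is analytic in $\alpha > 0$ thanks to $\phi(x, \alpha) = \alpha \phi(\alpha x, 1)$ and its counterpart for $F^{\pm}$; and translation is analytic in $D$ because $\phi$ and $F^{+}$ are real analytic functions of $x$ (by classical elliptic regularity, since $-\Delta \phi + \phi = \phi^3$ and the eigenvalue system $\mc H F^{+} = i \sigma F^{+}$ have real analytic coefficients) with exponential spatial decay, so the Taylor series $\sum_{\beta} D^{\beta} \partial^{\beta} u / \beta!$ converges geometrically in $\dot H^{1/2}$.

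\emph{Invertibility of $d\tilde F$ at $h = 0$.} Differentiating in the trivialization yields
\be\nonumber
d\tilde F(\delta R_0, \delta \pi, \delta h) = (d_{\pi} W) \delta \pi + \frak g_W \delta R_0 + \delta h \, F^{+}(W) + \big((d_W \frak g_W) \delta \pi\big) R_0.
\ee
The first three summands land, respectively, in the complementary subspaces $P_0(W) \dot H^{1/2}$, $(P_c(W) + P_-(W)) \dot H^{1/2}$, and $P_+(W) \dot H^{1/2}$ of the spectral direct sum decomposition
\be\nonumber
\dot H^{1/2} = P_0(W) \dot H^{1/2} \oplus P_+(W) \dot H^{1/2} \oplus P_-(W) \dot H^{1/2} \oplus P_c(W) \dot H^{1/2}
\ee
recalled in Section \ref{spectru}, and each partial map is an isomorphism onto its factor (using that $\partial_f W$ form a basis of $T_W \mc M = P_0(W) \dot H^{1/2}$, that $\frak g_W$ is a linear isometry up to scaling, and that $F^{+}(W)$ spans $P_+(W) \dot H^{1/2}$). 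The residual cross term is bounded in operator norm by $C \|R\|_{\dot H^{1/2}}$, hence is a small perturbation of the principal part once $\|R\|_{\dot H^{1/2}} < \delta_0$ is small enough, and a Neumann series argument yields invertibility of the full differential.

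\emph{Conclusion and main obstacle.} The analytic inverse function theorem in Banach spaces then gives the stated local real analytic diffeomorphism property. The principal technical hurdle is the real analytic (as opposed to merely smooth) dependence on the translation parameter $D$: this genuinely needs real analyticity of $\phi$ and $F^{+}$ in the space variable together with their exponential decay, whereas every other ingredient is either linear in its argument or depends on a finite-dimensional parameter through an entire function.
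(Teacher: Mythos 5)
Your proposal is correct and follows the same overall architecture as the paper: compute $d\tilde F$, invert it using the spectral splitting $\dot H^{1/2} = \Ran P_0(W) \oplus \Ran P_+(W) \oplus \Ran P_-(W) \oplus \Ran P_c(W)$ (the paper writes the inverse explicitly via the pairings $\langle \cdot, \Xi_f(W)\rangle$ and $\langle\cdot, i\sigma_3 F^-(W)\rangle$ rather than invoking a Neumann series, but this is the same idea; your explicit tracking of the cross term $((d_W\frak g_W)\delta\pi)R_0$ coming from the trivialization is actually a bit more careful than the paper's computation), then invoke the analytic inverse function theorem, and finally reduce everything to analyticity of $\pi \mapsto w(\pi)$ and $\pi\mapsto F^+(W(\pi))$, which in turn reduces to spatial analyticity of $\phi$ and $f^+$. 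Where you genuinely diverge is in the last step: the paper (Lemma \ref{lemma_2.10}) proves the required derivative bounds by a self-contained induction on $\|\partial^\beta\phi\|_{H^2}$ using the equation $(-\Delta+1)\partial^\beta\phi = \partial^\beta(\phi^3)$ and a convolution inequality, whereas you cite interior analyticity for elliptic equations with analytic nonlinearity plus exponential decay. Your route is legitimate, but the one step you should not gloss over is the passage from \emph{local} analyticity to convergence of $\sum_\beta D^\beta\partial^\beta u/\beta!$ in a \emph{global} norm such as $\dot H^{1/2}$: this needs the radius of analyticity to be bounded below uniformly over $\set R^3$ and the local bounds to be summable, i.e.\ $\|\partial^\beta\phi\|_{L^\infty(B_1(x_0))}\le C(x_0)\,C_2^{|\beta|}\beta!$ with $C(x_0)$ exponentially decaying; this does follow from translation invariance of the interior estimates together with the Agmon decay of $\phi$, but it is exactly the quantitative content that the paper's induction is designed to supply, so it deserves a sentence rather than the phrase ``classical elliptic regularity.'' (Incidentally, your formulation --- analyticity on a strip in $D$, with the factorial in the denominator --- is the correct and sufficient statement; the paper's Lemma \ref{lemma_2.10} asserts the stronger bound with no factorial in the $D_k$ derivatives, which you do not need.)
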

%

\begin{proof} Let $W_0 = W(p_0)$ and consider the first-order differential of $\tilde F$, $d\tilde F \mid_{(W_0, R_0, h_0)}: \R^8 \times \mc N_{lin}(W_0) \times \R \to \dot H^{1/2}$:
\be\begin{aligned}
d\tilde F \mid_{(W_0, R_0, h_0)}(\delta \pi, \delta R, \delta h) &= (d_{\pi} W_0) \delta \pi + \delta R + (\delta h) F^+(W_0) + h_0 d_{\pi} F^+(W_0)\,\delta \pi.
\end{aligned}\ee
$d\tilde F$ is bijective when $h_0=0$ due to the following identity:
\be\begin{aligned}
\Psi &= P_0(W_0) \Psi + P_c(W_0) \Psi + P_-(W_0) \Psi + P_+(W_0) \Psi \\
&= \sum_{f \in \{\alpha, \Gamma, v_k, D_k\}} \langle \Psi, \partial^*_fW_0 \rangle \partial_f W_0 + (P_c(W_0) + P_-(W_0)) \Psi + \\
&+ \alpha_0^{-3} \langle \Psi, i \sigma_3 F^-(W_0) \rangle F^+(W_0).
\end{aligned}\ee
This produces an explicit inverse for the linear first-order differential $d\tilde F$: if
\be
\Psi = d\tilde F \mid_{(W_0, R_0, h_0)}(\delta \pi, \delta R, \delta h),
\ee
then the inverse is given by
\be\begin{aligned}
\delta \pi &= (\langle \Psi, \partial^*_{\alpha}W_0 \rangle, \langle \Psi, \partial^*_{\Gamma}W_0 \rangle, \langle \Psi, \partial^*_{v_k}W_0 \rangle, \langle \Psi, \partial^*_{D_k}W_0 \rangle), \\
\delta R &= (P_c(W_0) + P_-(W_0)) \Psi, \\
\delta h &= \alpha_0^{-3} \langle \Psi, i \sigma_3 F^-(W_0) \rangle.
\end{aligned}\ee
The inverse function theorem shows that $\tilde F$ is then locally invertible.

Smoothness comes from the explicit forms of $\partial_f W_0$ and $\partial_f F^+(W_0)$. Indeed, In Lemma \ref{lemma_2.10} we prove the analiticity of $W(p)$ and of its derivatives up to any finite order, as well as that of $F^+(W(p))$ and its derivatives. This implies that $\tilde F$ is real analytic.
\end{proof}

The analyticity of $\tilde F$ is closely tied to that of the soliton $W(p)$, of its derivatives, and of the eigenvectors $F^+(W(p))$, considered as functions of the parameters $p$. The analyticity of $W(p)$ was proved by Li--Bona \cite{bona}, but here we need a different statement.
\begin{lemma}\lb{lemma_2.10}
For an exponentially decaying ground state $\phi$ of equation \eqref{phi}
$$
-\Delta \phi(\cdot, \alpha) + \alpha^2 \phi(\cdot, \alpha) = \phi^3(\cdot, \alpha),
$$
the soliton
\be\lb{2.170}
w(p)(x) = e^{i(x \cdot v+\Gamma)} \phi(x - D, \alpha)
\ee
is a real analytic function of $p$ in any Schwartz seminorm $\mc S_n$. Furthermore, for any multiindex $\beta = (\beta_{\alpha}, \beta_{\Gamma}, \beta_{v_k}, \beta_{D_k})$
\be\lb{analitic_sol}
\|\partial^{\beta_{\alpha}}_{\alpha} \partial^{\beta_{\Gamma}}_{\Gamma} \partial^{\beta_{v_k}}_{v_k} \partial^{\beta_{D_k}}_{D_k} w\|_{\mc S_n} \leq C_1 C_2^{|\beta|} \prod_{f \in \{\alpha, v_k\}} \beta_f!.
\ee
Also consider the imaginary eigenstate $F^+(W(p))$ that solves the equation
$$
(\Delta - 1 + 2 |w(p)|^2) f^+(w(p)) + \phi^2 \ov f^+(w(p)) = i \sigma(w(p)) f^+(w(p)).
$$
$F^+(W(p))$ is a real analytic function of $p$ in any Schwartz seminorm $\mc S_n$.
\end{lemma}
We distinguish between two kinds of parameters for $w(p)$. In $\alpha$ and $v_k$, the domain of analyticity is a strip of the form $|\Re z| \leq z_0$. On the other hand, as a function of $\Gamma$ and $D_k$, $w(p)$ extends to an analytic function of exponential type on $\C$.

\begin{proof}
We first show that $w$ is an analytic $e^{-(1-\epsilon)|x|} L^{\infty}_x$-valued map. Since the derivatives of an analytic map are also analytic and $\partial_{D_k} w(p) = -\partial_{x_k} w(p)$, by iterating it follows that arbitrarily many derivatives of $w(p)$ are analytic maps into this space.
 
The real analyticity of $w$ is equivalent to its complex analyticity and the joint complex analyticity in all variables is equivalent to separate analyticity in each variable.

Since $\phi(\cdot, \alpha)$ decays exponentially at the rate $e^{-(\alpha^2-\epsilon) |x|}$, by the Agmon bound, we can extend $w$ to an analytic function for all $\Gamma$ and on the strip $\{v \mid |\Im v| < \alpha^2\}$. This proves analyticity in regard to $\Gamma$ and $v$.

Analyticity in $D$ and $\alpha$ requires that $\phi(x, 1)$ is an analytic function in $x$, in the sense that there exist constants $C_1$ and $C_2$ such that for every multiindex $\beta = (\beta_{v_1}, \beta_{v_2}, \beta_{v_3})$
$$
\|\partial^{\beta} \phi\|_2 \leq C_1 |\beta|! C_2^{|\beta|}.
$$
$\phi$ is exponentially decaying, $|\phi(x)| \leq C e^{-(1-\epsilon)|x|}$, and satisfies the equation
\be\lb{eqg}
-\Delta \phi + \phi = g(\phi).
\ee
By taking $g(\phi) = \phi^3$, we retrieve equation (\ref{phi}). However, we make the more general assumption that $g$ is analytic and its derivatives grow subexponentially, meaning that, for any $C_4>0$,
$|g^{(j)}(0)| \les C_4^j$.

It is enough to impose this condition on $g$ itself, since its derivatives $g^{(j)}$ will then satisfy the same bound by Cauchy's formula.

Then, for any $C_4>0$ and fixed $f$ in the Sobolev space $H^2$, $\|g^{(j)} \circ f\|_{H^2} \leq C_1 C_4^j$.

This characterizes not only polynomials, such as $g(z) = z^3$, but also functions of subexponential growth such as $g(z)=\cosh(\sqrt z)$.

Applying $\partial^\beta$ to (\ref{eqg}), we obtain
$$\begin{aligned}
(-\Delta+1) \partial^{\beta} \phi &= \partial^{\beta} (g \circ \phi) \\
&= \sum_{j=1}^{|\beta|} \sum_{\beta_1 + \ldots + \beta_j = \beta} \partial^{\beta_1} \phi \cdot \ldots \cdot \partial^{\beta_j} \phi \cdot (g^{(j)} \circ \phi).
\end{aligned}$$

We show by induction on $|\beta|$ that, in the Banach algebra $H^2$, for a suitable $C_2$
$$
\|\partial^{\beta} \phi\|_{H^2} \leq \frac{C_3 C_2^{|\beta|}}{|\beta|^2+1}.
$$

Assuming that this induction hypothesis holds for all indices up to $|\beta|$, we obtain that
$$\begin{aligned}
\|\partial^{\beta} \phi\|_{H^4} & \les \|(-\Delta+1)(\partial^{\beta} \phi)\|_{H^2} \\
& \les C_3 C_2^{|\beta|} \sum_{j=1}^{|\beta|} \sum_{\beta_1 + \ldots + \beta_j = \beta} \frac{C_4^j C_1^j}{(|\beta_1|^2 + 1) \cdot \ldots \cdot (|\beta_j|^2 + 1)}.
\end{aligned}$$
Note that
$$
\sum_{k=0}^{n} \frac 1 {(k^2 +1)((n-k)^2 + 1)} < \frac 6 {n^2 + 1}.
$$
By induction, we obtain that
$$
\sum_{n_1 + \ldots + n_j = n} \frac 1 {(n_1^2 + 1) \cdot \ldots \cdot (n_j^2 + 1)} \leq \frac {6^{j-1}} {n^2 + 1}. 
$$
Therefore
$$
\|\partial^{\beta} \phi\|_{H^4} \les \frac {C_3 C_2^{|\beta|}}{|\beta|^2 + 1} \sum_{j=1}^{|\beta|} (6 C_4 C_1)^j.
$$
By making $C_2$ and $C_3$ sufficiently large and $C_4$ sufficiently small, we obtain that the sum is uniformly bounded for all $|\beta|$ and
$$
\|\partial^{\beta} \phi\|_{H^4} \les \frac {C_2^{|\beta|}}{|\beta|^2 + 1} \leq \frac {C_3 C_2^{|\beta|+1}} {|\beta|^2+1}.
$$
This proves that the induction hypothesis holds for derivatives of order $|\beta|+1$.

This proof shows that $D \mapsto e^{D\dl} \phi$ is an analytic $H^2$-valued map. More generally, the same argument works for any Banach algebra $A$ containing $\phi$ and with the property that
$$
\|\dl f\|_A \les \|(-\Delta + 1) f\|_A.
$$
Thus, it suffices that $A$ is invariant under convolution with the kernel
$$
\frac {x_k e^{-|x|}}{|x|^3} + \frac {x_k e^{-|x|}}{|x|^2}.
$$
For $\epsilon>0$, the Banach algebra $A := e^{-(1-\epsilon)|x|}L^{\infty}$ fulfills this condition, hence $D \mapsto e^{D\dl} \phi$ is an $A$-valued analytic map.

This ensures the joint analyticity of $w(p)$ given by (\ref{2.170}) in the variables $D$, $v$, and $\Gamma$ around the point $\phi = w(p=(1, 0, 0, 0))$. By symmetry transformations, this implies analyticity around any other point.

$\partial_\alpha \phi$ is given by the generator of dilations, which is a combination of multiplication by $x$ and taking the gradient $\dl$, applied to $\phi$:
$$
\partial_{\alpha} \phi = (\alpha^{-1} + x \dl) \phi.
$$
Therefore, analyticity with respect to $\alpha$ follows from that with respect to translations and boost, i.e.\ $D$ and $v$.

When $w(p)$ is analytic, so are its derivatives, by definition. Hence $w(p)$ is analytic in any Schwartz seminorm $\mc S_n$.
%

The analyticity of $F^+(W(p))$ as a function of $p$ reduces to that of a fixed eigenfunction $F^+(W(p_0))$ with respect to the symmetry transformations. $F^+ = \bpm f^+ \\ \ov f^+ \epm$ (see Section \ref{spectru}) satisfies the equation
$$
(\Delta - 1 + 2 \phi^2) f^+ + \phi^2 \ov f^+ = i \sigma f^+.
$$
Using the fact that $\phi$ is analytic, the proof proceeds in the same manner: one shows by induction that
$$
\|\partial^{\beta} f^+\|_{e^{-(1-\epsilon)|x|}L^{\infty}} \leq \frac {C_3 C_2^{|\beta|}}{|\beta|^2 + 1}.
$$
\end{proof}

Lemma \ref{lemma_11} has the following consequence:
\begin{proposition} $\mc F_{W_0}: \mc N_{lin}(W_0) \to \mc N$ given by \eqref{2.105} is locally one-to-one and Lipschiz. $\mc F_{W_0}^{-1}: \mc N \to \mc N_{lin}$ defined on $\Ran \mc F$ is also locally Lipschitz.
\lb{prop_9}
\end{proposition}
\begin{proof} $\mc F$ is Lipschitz because $h(R, W)$ is so too, by Proposition \ref{proposition_10}, and because
$$
\mc F_{W_0}(R) = \tilde F(W_0, R, h(R, W_0)).
$$
The local invertibility of $\mc F_{W_0}$ is due to Lemma \ref{lemma_11}: for a sufficiently small $\delta_0$, $h(R, W_0)$ is close to zero and the previous lemma applies. To establish the Lipschitz property of $\mc F^{-1}$, we discard the parameter $h$.
\end{proof}
Thus, $\mc N$ is a Lipschitz manifold and its embedding into $\dot H^{1/2}$ is also locally Lipschitz. If $h$ is $C^n$, smooth, or analytic, the same is true for $\mc N$ and its embedding into $\dot H^{1/2}$.

Lemma \ref{lemma_11} also implies that, if $\Psi \in \dot H^{1/2}$ is sufficiently close to the manifold of solitons $\Sol$, there exists a unique soliton nearest to $\Psi$ in the $\dot H^{1/2}$ norm, which we call its projection on $\Sol$. This notion should be taken with a grain of salt, because there are several ways to define a natural $\dot H^{1/2}$ norm for (\ref{NLS}).

We recall that $R$ and all other $\C^2$-valued functions we employ have the form $R = \bpm r \\ \ov r \epm$, so the dot product is real-valued.

\begin{lemma} For every $W \in \Sol$ there exists $\delta > 0$ such that, for $\Psi \in \dot H^{1/2}$, whenever $\|\Psi-W\|_{\dot H^{1/2}} < \delta$ there exists $W_1$ such that $P_0(W_1)(\Psi-W_1) = 0$ and
\be\lb{eq_lemma_10}
\|\Psi - W_1\|_{\dot H^{1/2}} \leq C \|\Psi-W\|_{\dot H^{1/2}}.
\ee
Furthermore, $W_1$ depends Lipschitz continuously on $\Psi \in \dot H^{1/2}$.
\lb{lemma_10}
\end{lemma}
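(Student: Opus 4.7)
The plan is to apply the implicit function theorem to the map that enforces the orthogonality condition. Parametrize solitons by $\pi \in \set R^8$ near the base point $\pi_0$ with $W = W(\pi_0)$, and define
\be\nonumber
G:\dot H^{1/2} \times \set R^8 \to \set R^8,\quad
G(\Psi, \pi) = \bigl( \langle \Psi - W(\pi), \Xi_f(W(\pi)) \rangle \bigr)_{f \in \{\alpha, \Gamma, v_k, D_k\}}.
\ee
Observe that $G(W,\pi_0) = 0$ and that, since $\Xi_f(W(\pi))$ is a Schwartz function depending smoothly on $\pi$, the map $G$ is continuously differentiable (indeed, real-analytic in $\pi$ by Lemma \ref{lemma_2.10}, and affine, hence smooth, in $\Psi$), and its $\pi$-derivative at the base point is
\be\nonumber
\partial_\pi G\bigl|_{(W,\pi_0)} \delta\pi = -\bigl(\langle (d_{\pi} W(\pi_0)) \delta\pi,\, \Xi_f(W(\pi_0)) \rangle\bigr)_f,
\ee
because the term involving $\partial_\pi \Xi_f$ is multiplied by $\Psi - W(\pi) = 0$ at this point.

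Step two is to invert this derivative. By the pairings recorded at the start of the proof of Lemma \ref{lemma_6}, the matrix $\bigl(\langle \partial_g W, \Xi_f(W) \rangle\bigr)_{f,g}$ is diagonal with nonzero entries proportional to $\|W\|_2^2$. Hence $\partial_\pi G|_{(W,\pi_0)}$ is a linear isomorphism of $\set R^8$. The implicit function theorem (in its Lipschitz version, which suffices even without full $C^1$ regularity in $\Psi$) then produces a neighborhood $\|\Psi - W\|_{\dot H^{1/2}} < \delta$ and a locally Lipschitz map $\Psi \mapsto \pi_1(\Psi)$ with $\pi_1(W) = \pi_0$ and $G(\Psi, \pi_1(\Psi)) = 0$. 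Setting $W_1 = W(\pi_1(\Psi))$, the identity $\langle \Psi - W_1, \Xi_f(W_1) \rangle = 0$ for every $f$ is, by the definition of the zero-spectrum projection given in Section \ref{spectru}, exactly $P_0(W_1)(\Psi - W_1) = 0$.

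Step three is the norm estimate. By the triangle inequality and the Lipschitz dependence of $\pi_1$ on $\Psi$,
\be\nonumber
\|\Psi - W_1\|_{\dot H^{1/2}} \leq \|\Psi - W\|_{\dot H^{1/2}} + \|W(\pi_0) - W(\pi_1)\|_{\dot H^{1/2}} \leq \|\Psi - W\|_{\dot H^{1/2}} + C|\pi_0 - \pi_1|,
\ee
and $|\pi_0 - \pi_1| = |\pi_1(W) - \pi_1(\Psi)| \leq C \|\Psi - W\|_{\dot H^{1/2}}$, giving the claimed bound. The Lipschitz dependence of $W_1$ on $\Psi$ follows from the same Lipschitz property of $\pi_1$ together with the smoothness of $\pi \mapsto W(\pi)$.

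The only subtlety is to confirm that the pairings $\langle \cdot, \Xi_f(W(\pi))\rangle$ extend continuously to $\dot H^{1/2}$: this is immediate because each $\Xi_f(W(\pi))$ is Schwartz, hence lies in $\dot H^{-1/2}$, with norms varying continuously in $\pi$. Once this is noted, the argument is a textbook application of the implicit function theorem, and no further obstacle arises.
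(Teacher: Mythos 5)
Your proof is correct, but it follows a different path than the paper's. The paper does not set up a fresh implicit function theorem argument for the orthogonality conditions; instead it invokes the already-established Lemma \ref{lemma_11}, namely that $\tilde F(W_1, R, h) = W_1 + R + h F^+(W_1)$ is a local diffeomorphism near $(W, 0, 0)$ with bounded inverse, writes $\Psi = \tilde F(W_1, R, h)$, and reads off $P_0(W_1)(\Psi - W_1) = 0$ from $R \in \mc N_0(W_1)$ and $F^+(W_1) \in \Ran P_+(W_1)$. For the norm bound the paper then argues via a Taylor expansion that $(I - P_0(W_1))(W - W_1)$ is quadratically small (since $W - W_1$ is tangent to the soliton manifold to first order), and closes a small bootstrap in $\delta$. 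Your route solves only the eight scalar equations $\langle \Psi - W(\pi), \Xi_f(W(\pi))\rangle = 0$ by the implicit function theorem, using the diagonal nondegenerate pairing matrix from the proof of Lemma \ref{lemma_6} to invert the $\pi$-derivative, and gets the norm estimate directly from the Lipschitz constant of the implicitly defined $\pi_1(\Psi)$ plus the triangle inequality. Both arguments are IFT-based at heart, but yours is more self-contained and avoids the quadratic-tangency bootstrap; the paper's buys the extra information that $\Psi - W_1$ decomposes further into $R + h F^+(W_1)$ with all three components controlled, which is reused elsewhere (e.g.\ in Proposition \ref{comparatie}). Your observation that the pairings extend to $\dot H^{1/2}$ because the $\Xi_f$ are Schwartz is exactly the right point to check, and the identification of the vanishing of the eight pairings with $P_0(W_1)(\Psi - W_1) = 0$ via the explicit formula for $P_0$ is correct.
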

Due to symmetry transformations, $\delta$ can be chosen independently of $W$.

This also holds for $\dot W^{1/2, 6}$, with the same proof, but we do not use it in the sequel.

On a Hilbert space, this statement is straightforward when $P_0$ is an orthogonal projection and the constant can then be taken to be $1$. However, this proof does not use orthogonality.
\begin{proof}
If $\delta$ is sufficiently small, Lemma \ref{lemma_2.10} implies that $\Psi = \tilde F(W_1, R, h)$ for some $(W_1, R, h)$ close to $(W, 0, 0)$: since $\tilde F^{-1}$ is bounded,
\be
\|W-W_1\|_{\dot H^{1/2}} + \|R\|_{\dot H^{1/2}} + |h| \les \|\Psi-W\|_{\dot H^{1/2}} \les \delta.
\ee
Since $R \in \mc N_{lin}(W_1)$, by definition $P_0(W_1)(\Psi-W_1) = 0$.

To a first order $W-W_1$ lies in a direction tangent to $\Sol$, i.e.\ within the range of $P_0(W_1)$, so by the Taylor expansion
$$
\|(I-P_0(W_1))(W-W_1)\|_{\dot H^{1/2}} \les \|W-W_1\|_{\dot H^{1/2}}^2.
$$
Thus
\be\begin{aligned}
\|\Psi - W_1\|_{\dot H^{1/2}} & = \|(I-P_0(W_1))(\Psi-W_1)\|_{\dot H^{1/2}} \\
& \leq C \|\Psi-W\|_{\dot H^{1/2}} + \|(I-P_0(W_1))(W-W_1)\|_{\dot H^{1/2}} \\
& \les \|\Psi-W\|_{\dot H^{1/2}} + \|W-W_1\|_{\dot H^{1/2}}^2.
\end{aligned}\ee
Moreover,
\be\begin{aligned}
\|W-W_1\|_{\dot H^{1/2}}^2 & \leq (\|\Psi-W\|_{\dot H^{1/2}} + \|\Psi - W_1\|_{\dot H^{1/2}})^2 \\
& \les \|\Psi-W\|_{\dot H^{1/2}} + \delta \|\Psi - W_1\|_{\dot H^{1/2}}.
\end{aligned}\ee
For sufficiently small $\delta$ we retrieve (\ref{eq_lemma_10}).

The proof of continuous dependence is completely analogous.
\end{proof}

The solutions $\psi$ constructed by Proposition \ref{prop27} arise by a very specific and rather technical construction. We next give a more general definition of asymptotically stable solutions and prove that all such solutions can be retrieved by means of Proposition \ref{prop27}.

\begin{definition} Fix a small parameter $\delta_1>0$. $\psi(x, t)$ is a \emph{small asymptotically stable} solution to (\ref{NLS}) if $\sup_{t \geq 0} d_{\dot H^{1/2}}(\psi(t), \Sol) < \delta_1$ and $\sup_{t \geq 0} \inf_{\Gamma, D} \|\psi(t) - e^{i\Gamma+D\dl} \psi(0)\|_{\dot H^{1/2}}<~2 \|\phi\|_{\dot H^{1/2}}$.
\lb{definition_4}\end{definition}
Thus, solutions in this class live near the soliton manifold $\sol$ for all $t \geq 0$ and travel at most a finite, but not necessarily small, distance in the parameter space in the $v$ and $\alpha$ directions. Here $\phi$ is the solution to (\ref{phi}).








Every solution with initial data in $\mc N$ is small and asymptotically stable. A partial converse is also true: we show that Definition \ref{definition_4} characterizes the set $\mc N$.
\begin{proposition}\lb{prop_2.13}
There exists $\delta_1>0$ such that, if $\psi(0)$ is the initial value of a small asymptotically stable solution $\psi$ of \eqref{NLS} that satisfies Definition \ref{definition_4}, then $\psi(0) \in \mc N$.
\end{proposition}
This remains true after replacing $0$ with any $t \geq 0$.
\begin{proof} Write $\Psi = W_{\pi}(t) + R(t)$, $\pi(0) = p_0$, satisfying the orthogonality condition
\be\lb{orto}
P_0(W_{\pi}(t)) R(t) = 0.
\ee
By Lemma \ref{lemma_10}, there exists $\pi(t)$ such that $P_0(W_\pi(t)) R(t) = 0$. This is equivalent to the orthogonality condition (\ref{orto}).

Let $\pi(t)=(\alpha(t), \Gamma(t), v_k(t), D_k(t))$. Definition \ref{definition_4} implies that for all $t \geq 0$
$$
\|w(\pi(t)) - w(\pi(0))\|_{\dot H^{1/2}} \les 2\delta_1 + \sup_{t \geq 0} \|\psi(t) - \psi(0)\|_{\dot H^{1/2}} < 2 \|\phi\|_{\dot H^{1/2}}.
$$
When $\alpha_1>>\alpha_2$, $\|\phi(\cdot, \alpha_1) -\phi(\cdot, \alpha_2)\| \to 2 \|\psi\|_{\dot H^{1/2}}$. Therefore, for all $t \geq 0$ $\alpha(t)$ belongs to a fixed compact set.

Due to the orthogonality condition (\ref{orto}), the conservation of momentum implies that
$$
P[w_{\pi}(t)] + P[r(t)] = P[w_{\pi}(0)] + P[r(0)].
$$
Thus $\sup_{t \geq 0} \big|P[w_{\pi}(t)] - P[w_{\pi}(0)]\big| < 2\delta_1$. Therefore $v(t)$ belongs to a fixed compact set for all $t$.

Thus, $\sigma(W(\pi(t))$ belongs to a compact set in $(0, \infty)$ and the solitons $W(\pi(t))$ are uniformly bounded in each Schwartz seminorm.

(\ref{orto}) translates into modulation equations for $\pi$ of the form (\ref{mod}). Then $(R, \pi)$ satisfy the equation system, akin to (\ref{ec_liniara}),
$$\begin{aligned}
&i \partial_t R + H_{\pi}(t) R = F,\ F= -i L_{\pi} R + N(R, W_{\pi}) - N_{\pi}(R, W_{\pi})\\
&\dot f = 2\alpha^2 \|\phi\|_{\dot H^{1/2}}^{-2} \big(\langle R, (d_{\pi} \partial^*_fW_\pi)\, \dot \pi \rangle - i\langle N(R, W_\pi), \partial^*_fW_\pi\rangle\big), f \in \{\alpha, \Gamma\}\\
&\dot f = \alpha \|\phi\|_{\dot H^{1/2}}^{-2} \big(\langle R, (d_{\pi} \partial^*_fW_\pi)\, \dot \pi \rangle - i\langle N(R, W_\pi), \partial^*_fW_\pi\rangle\big), f \in \{v_k, D_k\}.
\end{aligned}$$
By the second and the third equation, we infer that
$$
\|\dot \pi\|_{L^{\infty}} \les \|R\|_{L^{\infty}_t \dot H^{1/2}_x} \|\dot \pi\|_{L^{\infty}} + \|R\|_{L^{\infty}_t \dot H^{1/2}_x}^2 + \|R\|_{L^{\infty}_t \dot H^{1/2}_x}^3.
$$
Hence, for small $\delta_1>0$, $\|\dot \pi\|_{L^{\infty}_t}$ is finite:
$$
\|\dot \pi\|_{L^{\infty}} \les \delta^2 + \delta^3.
$$
Due to (\ref{orto}), $R$ has no null eigenspace component. Thus $R$ decomposes into two components, $R(t) = P_c(t) R(t) + P_{im}(t) R(t)$. Since $\sigma(W_\pi(t)) = \sigma(W(\pi(t))$ is bounded from below, Lemma \ref{hyp} applies. Then for
$$\begin{aligned}
h_0 F^+(W_\pi(0)) &= P_+(W_\pi(0)) R(0),\\
h^+(t) F^+(W(\pi(t)) &= P_+(W_\pi(t)) R(t),\\
h^-(t) F^-(W(\pi(t)) &= P_-(W_\pi(t)) R(t),
\end{aligned}$$
we have that $h_0=h^+(0)$ and
$$\begin{aligned}
h^+(t) &= -\int_t^{-\infty} e^{\int_s^t \sigma(\tau) \dd \tau} N^+(s) \dd t,\\
h^-(t) &= e^{-\int_0^t \sigma(\tau) \dd \tau} h^-(0) + \int_0^t e^{-\int_s^t \sigma(\tau) \dd \tau} N^-(s) \dd t.
\end{aligned}$$
Then, both the initial data
$$
\Psi(0) = W(p_0) + R_0 + h_0 F^+(W_0)
$$
and, by Proposition \ref{prop27}, the initial data
$$
\tilde \Psi(0) = W_0 + R_0 + h(R_0, W_0) F^+(W_0)
$$
give rise to the small asymptotically stable solutions $\Psi$, $\tilde \Psi \in X$,
$$
\Psi(t) = W_{\pi}(t) + R(t),\ \tilde \Psi(t) = W_{\tilde \pi}(t) + \tilde R(t).
$$
By the comparison Lemma \ref{lemma_5}, since $W_0$ and $R_0$ are the same for both solutions,
\be
\|(R, \pi) - (\tilde R, \tilde \pi)\|_{e^{t\rho} L^{\infty}_t \dot H^{1/2}_x \times \partial_t^{-1}  e^{t\rho} L^{\infty}_t} + |h_0 - h(R_0, W_0)| \les \delta \|(R, \pi) - (\tilde R, \tilde \pi)\|_{e^{t\rho} L^{\infty}_t \dot H^{1/2}_x \times \partial_t^{-1}  e^{t\rho} L^{\infty}_t}.
\ee
For sufficiently small $\delta$, we obtain that $(R, \pi) = (\tilde R, \tilde \pi)$, hence $h_0 = h(R_0, W_0)$. Therefore
$$
\Psi(0) = \tilde \Psi(0) = W_0 + R_0 + h(R_0, W_0) F^+(W_0) \in \mc N.
$$
\end{proof}

\begin{corollary} If $\Psi$ is a solution to \eqref{NLS} whose initial data $\Psi(0)$ belongs to $\mc N$, then $\Psi(t)$ also belongs to $\mc N$ for all $t>0$ and for sufficiently small $t<0$.
\lb{stable}
\end{corollary}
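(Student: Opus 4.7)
The plan is to combine the converse Proposition \ref{comparatie} with a simple time-translation argument and, for negative times, with local well-posedness for the critical equation (\ref{NLS}) near a soliton.

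First I would treat positive times. Fix $\Psi(0) \in \mc N$ and let $\Psi(t) = W_{\pi}(t) + R(t)$ be the resulting small asymptotically stable solution on $[0, \infty)$, which satisfies $\|(R, \pi)\|_X < \delta_0$ in the sense of Definition \ref{definition_4}. For any $t_0 \geq 0$, define the time-translated solution $\tilde \Psi(s) := \Psi(s + t_0)$ on $[0, \infty)$. By uniqueness for (\ref{NLS}), $\tilde \Psi$ is itself a solution and it admits the decomposition $\tilde \Psi(s) = W_{\tilde \pi}(s) + \tilde R(s)$, where $\tilde \pi(s) = \pi(s + t_0)$ (up to the $\int_0^s$ drift in (\ref{1.1}), which can be absorbed by an initial shift of the parameters since symmetry transformations stabilize $\mc N$) and $\tilde R(s) = R(s + t_0)$. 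The Strichartz norms on $[t_0, \infty)$ are dominated by those on $[0, \infty)$, so $\|\tilde R\|_{L^{\infty}_t \dot H^{1/2}_x \cap L^2_t \dot W^{1/2,6}_x} \leq \|R\|_{L^{\infty}_t \dot H^{1/2}_x \cap L^2_t \dot W^{1/2,6}_x}$ and $\|\dot{\tilde \pi}\|_{L^1_t} \leq \|\dot \pi\|_{L^1_t}$. Consequently $\|(\tilde R, \tilde \pi)\|_X < \delta_0$, so $\tilde \Psi$ is small asymptotically stable, and Proposition \ref{comparatie} yields $\Psi(t_0) = \tilde \Psi(0) \in \mc N$.

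For the negative-time part, the obstacle is that we only constructed $\Psi$ going forward, so we need a backward extension. Since $\Psi(0) \in \mc N$ is close in $\dot H^{1/2}$ to the smooth, Schwartz-class soliton $W_0$ (by Lemma \ref{lemma_2.10}), the standard local Cauchy theory for the $\dot H^{1/2}$-critical equation (\ref{NLS}) provides some $T>0$ and an extension of $\Psi$ to $[-T, 0]$ with $\Psi \in L^{\infty}_t \dot H^{1/2}_x \cap L^2_t \dot W^{1/2,6}_x$ on $[-T, T]$, and with $\|\Psi(t) - W_0\|_{\dot H^{1/2}}$ as small as we wish on $[-T, 0]$ by choosing $T$ small. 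For $t_0 \in [-T, 0]$ I would then consider the forward solution with data $\Psi(t_0)$; by uniqueness, it equals $\Psi(s + t_0)$ for $s \geq 0$. The shifted decomposition $\tilde \Psi(s) = W_{\tilde \pi}(s) + \tilde R(s)$ is small on $[|t_0|, \infty)$ as before, and on $[0, |t_0|]$ its closeness to the soliton manifold is controlled by local Strichartz bounds (choose $T$ small relative to $\delta_0$). Combining these two pieces yields $\|(\tilde R, \tilde \pi)\|_X < \delta_0$, so $\tilde \Psi$ is small asymptotically stable and Proposition \ref{comparatie} again gives $\Psi(t_0) \in \mc N$.

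The only delicate step is the backward extension: one must verify that $\tilde R$ truly satisfies the orthogonality condition $P_0(W_{\tilde \pi}(s)) \tilde R(s) = 0$ and the smallness bound jointly over the spliced interval. This is handled by first extending the modulation path $\tilde \pi$ backward (using Lemma \ref{lemma_10} applied pointwise in $s$, which projects $\Psi(s)$ onto the soliton manifold and produces a Lipschitz path near $s=0$) and then invoking the discussion following Definition \ref{definition_4}, which shows the orthogonality condition can always be arranged by a reparametrization that only changes the norms by a constant. Once these two pieces are assembled into a single decomposition on $[0, \infty)$, Proposition \ref{comparatie} applies without modification and the corollary follows.
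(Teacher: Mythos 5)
Your proposal is correct and follows essentially the same route as the paper: the paper's own proof is a two-line observation that $\Psi(t)$ exists (by global forward existence on $\mc N$ and local existence theory backward in time) and gives rise to a small asymptotically stable solution in the sense of Definition \ref{definition_4}, whereupon Proposition \ref{comparatie} forces $\Psi(t) \in \mc N$. You have simply filled in the details the paper leaves implicit (monotonicity of Strichartz norms under time restriction, and the remark after Definition \ref{definition_4} that the orthogonality condition can always be arranged), all of which are consistent with the paper's intent.
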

\begin{proof}
Both for $t>0$ and for sufficiently small $t<0$, $\psi(t)$ exists by Proposition \ref{prop27} for $t>0$ and due to the local existence theory for small $t<0$.

$\psi(t)$ gives rise to a small asymptotically stable solution that conforms to Definition \ref{definition_4} --- that solution being $\psi$ itself.

Proposition \ref{prop_2.13} then shows that $\psi(t) \in \mc N$.
\end{proof}

Five maps describe asymptotically stable solutions to (\ref{NLS}) on $t \in [0, \infty)$:
\begin{align}
\nonumber h(W_0, R_0)&: \mc N_{lin}(W_0) \to \R, \\
\nonumber \mc F_{W_0}(R_0)&: \mc N_{lin}(W_0) \to \mc N,\ \mc F(R_0, W_0) = \tilde F(W_0, R_0, h(R_0, W_0)) := W_0 + R_0 + h(R_0, W_0) F^+(W_0),
\intertext{and the actual solution map}
\nonumber \Psi(R_0, W_0)&: \mc N_{lin}(W_0) \to e^{t\rho} L^{\infty}_t \dot H^{1/2}_x \cap e^{t\rho} L^2_t \dot W^{1/2, 6/5}_x := W_{\pi(R_0, W_0)} + R(R_0, W_0), \\
\nonumber \pi(R_0, W_0)&: \mc N_{lin}(W_0) \to e^{t\rho} \dot W^{1, 1}_t, \\
\nonumber R(R_0, W_0)&: \mc N_{lin}(W_0) \to e^{t\rho} L^{\infty}_t \dot H^{1/2}_x \cap e^{t\rho} L^2_t \dot W^{1/2, 6/5}_x.
\end{align}

Note that $\mc F_{W_0}(R_0) = \Psi(W_0, R_0)(0)$ and $\Psi(W_0, R_0) = R(W_0, R_0) + W_{\pi(W_0, R_0)}$.

We prove that all five maps are real analytic, hence $h \circ \mc F^{-1}$, $\Psi \circ \mc F^{-1}$, $\pi \circ \mc F^{-1}$, and $R \circ \mc F^{-1}$ are also real-analytic. This is equivalent to changing the dependent variable from $R_0 \in \mc N_{lin}(W_0)$ to $\Psi(0) = W_0 + R_0 + h(W_0, R_0) \in \mc N(W_0)$.

Hence the solution $\Psi(t)$, its dispersive part $R(t)$, and the moving soliton $W_{\pi}(t)$ will depend analytically on $\Psi(0) \in \mc N(W_0)$.

The dependence of these maps on $W_0$, across the fibers of $\mc N$, is explicitly given by symmetry transformations, which are analytic in the $H^{1/2}$ setting.

Analyticity across the fibers of $\mc N$ (i.e.\ as a function of $W_0$) is then easy to obtain in $H^{1/2}$, but becomes more delicate in $\dot H^{1/2}$, because boost transformations $\psi \mapsto e^{ivx} \psi$ are not $\dot H^{1/2}$-bounded.

\begin{proposition}\lb{prop_an} For fixed $W_0 \in \Sol$ there exists $\rho>0$ such that $h(W_0, R_0)$, $\mc F_{W_0}(R_0) \in \dot H^{1/2}_x$, $(R, \pi)$, and $\Psi(R_0, W_0)$ are real analytic functions of 
$\Psi(0) \in \mc N(W_0)$.
\end{proposition}

Analyticity of $\Psi(R_0, W_0)$ is closely tied to that of the nonlinearity in (\ref{NLS}) and (\ref{NLS'}). If the nonlinearity were of class $C^k$, then the manifold $\mc N$, its embedding into $\dot H^{1/2}$, and $\Psi(R_0, W_0)$ would also be of class $C^k$. The cubic nonlinearity of (\ref{NLS}) is analytic.

\begin{proof}[Proof of Proposition \ref{prop_an}] 
Proving the analyticity of $h$, $\mc F$, $(R, \pi)$, and $\Psi$ on $\mc N$ reduces to showing that $(R, \pi)$ and $h$ are analytic as functions of $R(0) \in \mc N$, in the local coordinates determined by the chart $\mc F_{W_0}$ (\ref{local_chart}) --- thus as functions of $R_0$.

Let $W_{R_0} := \exp_{W_0} P_0(R_0)$. Recall that $\mc N$ is locally the image of the set $\{W_0 + R_0 \mid P_+(W_{R_0}) R_0 = 0,\ \|R_0\|_{\dot H^{1/2}} \leq \delta_0 << 1\}$ under the analytic map
$$
\mc F_{W_0}(R_0) := W_{R_0} + (I - P_0(W_{R_0}))R_0 + h\big(\big(P_c(W_{R_0}) + P_-(W_{R_0})\big)R_0, W_{R_0}\big) F^+(W_{R_0}).
$$
Take initial data for (\ref{NLS}) of the form $\Psi(0) = W_{\pi}(0) + R(0)$, where
$$
\pi(0) = p_0 + P_0(W_{R_0}) R_0,\ R(0)=\big(P_c(W_{R_0}) + P_-(W_{R_0})\big)R_0.
$$
$\Psi(0) \in\mc N$ is an analytic function of $R_0$: $\Psi(0) = \sum_{n=0}^{\infty} \Psi^n(0)$, where $\Psi^0(0) = W_0$, $\Psi^1(0) = R_0$, and
$$
\Psi^n(0) = d^n \big(W_{R_0} - P_0(W_{R_0}) R_0\big) + \sum_{m=0}^n h^m(R_0) (F^+)^{n-m}(W_{R_0}).
$$
Thus $\|\Psi^n(0)\| \les C_1^n \|R_0\|_{\dot H^{1/2}}^n$. Because $W$ and $F^{\pm}(W)$ are analytic, $R(0)$ and $\pi(0)$ are also analytic functions of $R_0$:
$$
R(0) = \sum_{m=0}^{\infty} R^m(0),\ \pi(0) = \sum_{m=0}^{\infty} \pi^m(0),
$$
where
\be\lb{cond_init}
|\pi^n(0)| + \|R^n(0)\|_{\dot H^{1/2}} \les \|R_0\|_{\dot H^{1/2}}^n.
\ee
In particular
$$
\pi^0(0) = p_0,\ \pi^1(0) = P_0(W_0) R_0,\ R^0(0) \equiv 0,\ R^1(0) = \big(P_c(W_0) + P_-(W_0)\big) R_0.
$$



Denote the solution to (\ref{NLS}) provided by Proposition \ref{prop27} for the initial value $\Psi(0)$ by $\Psi(t) := W_{\pi}(t) + R(t)$. $R(t)$ and $\pi(t)$ satisfy a system of the same form as (\ref{ec_liniara}):
\be\begin{aligned}\lb{ec_r0}
&i \partial_t R + H_{\pi}(t) R = F(\pi, R), \\
&\dot f = F_f(\pi, R),\ f \in \{\alpha, \Gamma, v_k, D_k\}.
\end{aligned}\ee
Here
$$\begin{aligned}
H_{\pi}(t) &= \Delta \sigma_3 + V_{\pi}(t),\\
F(\pi, R) &= -i L_{\pi} R + N(R, W_{\pi}) - N_{\pi}(R, W_{\pi})
\end{aligned}$$
and we introduced the notation
$$
F_f(\pi, R) := \left\{\begin{aligned}
4\alpha \|W_{\pi}\|_2^{-2} &(\langle R, d_{\pi} \partial^*_fW_{\pi}\, \dot \pi \rangle \\
&- i\langle N(R, W_{\pi}), \partial^*_fW_{\pi}\rangle),\ f \in \{\alpha, \Gamma\}\\
2\|W_{\pi}\|_2^{-2} &(\langle R, d_{\pi} \partial^*_fW_{\pi}\, \dot \pi \rangle \\
&- i\langle N(R, W_{\pi}), \partial^*_fW_{\pi}\rangle),\ f \in \{v_k, D_k\}.
\end{aligned}\right.
$$

We then find an asymptotic expansion around $0$ of the form
\be\lb{expansion}
R(R_0) \sim \sum_{n=0}^{\infty} R^n(R_0),\ \pi(R_0) \sim \sum_{n=0}^{\infty} \pi^n(R_0),\ h(R_0) \sim \sum_{n=0}^{\infty} h^n(R_0).
\ee
$R^n = \frac 1 {n!}\, d^n R(R_0)$, $\pi^n = \frac 1 {n!}\, d^n \pi(R_0)$, and $h^n = \frac 1 {n!}\, d^n h(R_0)$ are $n$-linear functions of $R_0$ and
\be\begin{aligned}\lb{analitic}
\Big\|R(R_0) - \sum_{m=0}^n R^m(R_0)\Big\|_{e^{t\rho}L^{\infty}_t \dot H^{1/2}_x \cap e^{t\rho} L^2_t \dot W^{1/2, 6}_x} &\leq C_n \|R_0\|_{\dot H^{1/2}}^{n+1}, \\
\Big\|\dot \pi(R_0) - \sum_{m=0}^n \dot \pi^m(R_0)\Big\|_{e^{t\rho} L^1_t} &\leq C_n \|R_0\|_{\dot H^{1/2}}^{n+1}, \\
\Big|h(R_0) - \sum_{m=0}^n h^m(R_0)\Big| &\leq C_n \|R_0\|_{\dot H^{1/2}}^{n+1}.
\end{aligned}\ee
We define $R^n$, $\pi^n$, and $h^n$ by identifying the $n$-th-order terms --- which will involve $R^m$, $\pi^m$, and $h^m$ for $1 \leq m <n$ ---  in the equation (\ref{ec_r0}), within the initial data $R(0)$ and within the nonlinearities $F(\pi, R)$ and $F_f(\pi, R)$. We then solve (\ref{ec_r0}) with this input and show that the remainders have order $n+1$ in $R_0$.

Finally, we prove that $h^n$, $(R^n, \pi^n)$, and the remainders in (\ref{expansion}) grow at most exponentially with $n$, hence that $h$ and $(R, \pi)$ equal the sum of their Taylor series inside a positive convergence radius.
In fact, $R$, $\pi$, and $h$ are analytic in the ball $B(W_0, C_1^{-1}) \cap \mc N \subset \dot H^{1/2}$, when $C_n \les C_1^n$ uniformly for all $n$ in (\ref{analitic}).

The constant zeroth-order terms in the expansion are
$$
R^0(t) = 0,\ \pi^0(t) \equiv p_0 \in \set R^8,\ h^0=0.
$$

We then find the first-order differentials $(dR, d\pi)$ and $dh$ and check that
$$
\|(R, \pi) - (dR, \pi^0 + d\pi)\|_Y \les \|R_0\|_{\dot H^{1/2}}^2;\ |h - dh| \les \|R_0\|_{\dot H^{1/2}}^2.
$$


Equation (\ref{ec_r0}) makes the orthogonality condition
\be\lb{2.207}
\langle R(t), \partial^*_fW_{\pi}(t) \rangle = 0
\ee
valid for all $t$. Indeed, since (\ref{2.207}) holds for $t=0$, (\ref{ec_r0}) ensures that (\ref{2.207}) holds for all $t$.

Define the first-order terms $R^1$ and $\pi^1$ as solutions of equation (\ref{ec_r1}), obtained by linearizing (\ref{ec_r0}) around $(R^0, \pi^0)$:
\be\lb{ec_r1}\begin{aligned}
&i \partial_t R^1 + H_{\pi^0}(t) R^1 = \partial_{\pi} F(\pi^0, R^0)\, \pi^1 + \partial_R F(\pi^0, R^0)\, R^1 - (\partial_{\pi} V_{\pi^0}(t)\, \dot \pi^1)\, R^0 \\
&\dot {f^1} = \partial_{\pi} F_f(\pi^0, R^0)\, \dot \pi^1 +  \partial_R F_f(\pi^0, R^0)\, R^1,\ f \in \{\alpha, \Gamma, v_k, D_k\}.
\end{aligned}\ee
For initial data, take
\be
R^1(0) := \big(P_c(W_0) + P_-(W_0)\big) R_0 + h^1 F^+(W_0),\ \pi^1(0) := P_0(W_0) R_0.
\ee
Note that, because $R^0(t) \equiv 0$, $\dot \pi^0(t) \equiv 0$, the following terms cancel in (\ref{ec_r1}):
$$\begin{aligned}
&(\partial_{\pi} V_{\pi^0}(t)\, \dot \pi^1)\, R^0 = 0,\ \partial_{\pi} F_f(\pi^0, R^0) = 0,\ \partial_R F_f(\pi^0, R^0) = 0, \\
&\partial_{\pi} F(\pi^0, R^0) = 0,\ \partial_R F(\pi^0, R^0) = 0.
\end{aligned}$$
This improvement only holds when taking the differential at $0$ and is due to the nonlinearity being of order higher than two, i.e.\ cubical in (\ref{NLS}).

By (\ref{ec_r1}) then $\dot \pi^1(t) \equiv 0$ and
\be\begin{aligned}\lb{2.213}
&i \partial_t R^1 + H_{\pi^0}(t) R^1 = 0.
\end{aligned}\ee
The orthogonality condition $\langle R, \partial^*_fW_{\pi} \rangle = 0$ becomes to a first order
$$
\langle R^1, \partial^*_fW_{\pi^0} \rangle + \langle R^0, \partial_{\pi} \partial^*_fW_{\pi^0}\, \pi^1 \rangle = 0.
$$
Consequently, as $R^0 \equiv 0$, the orthogonality condition also holds for $R^1$:
\be\lb{2.216}
\langle R^1, \partial^*_fW_{\pi^0} \rangle = 0.
\ee
By endpoint Strichartz estimates and Lemma \ref{hyp}, we obtain that $R^1$ is bounded for a unique value of $h^1$ and then it satisfies
$$
\|R^1\|_{L^{\infty}_t \dot H^{1/2}_x \cap L^2_t \dot W^{1/2, 6}_x} \les \|R_0\|_{\dot H^{1/2}}.
$$
Note that $\|R^1\|_{L^{\infty}_t \dot H^{1/2}_x \cap L^2_t \dot W^{1/2, 6}_x}$ can be made arbitrarily small and $\|\dot \pi^1\|_{L^1_t}=0$.

To show that $R^1(t) = dR$ and $\pi^1(t) = d\pi$, for $R^1(t)$ given by (\ref{2.213}) and $\pi^1(t) \equiv p_0$, consider
$$
S^1 = R^0 + R^1,\ \Sigma^1 = \pi^0 + \pi^1.
$$
The fact that $F(\pi, R)$, $V_{\pi}$, and $F_f(\pi, R)$ are differentiable means that
\be\begin{aligned}\lb{2.221}
F(\Sigma^1, S^1) &= F(R^0, \pi^0) + \partial_{\pi} F(\pi^0, R^0)\, \pi^1 + \partial_R F(\pi^0, R^0)\, R^1 + o_1(R^1, \pi^1), \\
V_{\Sigma^1} &= V_{\pi^0} + \partial_{\pi} V_{\pi^0}\, \pi^1 + o_2(\pi^1), \\
F_f(\Sigma^1, S^1) &= F_f(\pi^0, R^0) + \partial_{\pi} F_f(\pi^0, R^0)\, \pi^1 + \partial_R F_f(\pi^0, R^0)\, R^1 + o_3(R^1, \pi^1).
\end{aligned}\ee
In fact, all three quantities are analytic, being explicitly given by analytic functions.

The main issue is that the soliton $W_{\pi}(t)$ depends not just on $\pi(t)$ and $\dot \pi(t)$, but also on $\int_0^{t} \pi(s) \dd s$. Thus, 
each derivative increments the power of growth in $t$.

Due to analyticity, the error terms in (\ref{2.221}) are quadratic:
$$\begin{aligned}
 \|o_1(R^1, \pi^1)\|_{\langle t \rangle L^2_t W^{1/2, 6/5}_x} &\les \|(R^1, \pi^1)\|_X^2 \\
 \|o_2(\pi^1)\|_{\langle t \rangle L^{\infty}_t \dot W^{1/2, 6/5}_x \cap \langle t \rangle L^{\infty}_t L^{3/2-\epsilon}_x} &\les \|(R^1, \pi^1)\|_X^2 \\
\|o_3(R^1, \pi^1)\|_{\langle t \rangle L^1_t} &\les \|(R^1, \pi^1)\|_X^2.
\end{aligned}$$
Therefore
\be\begin{aligned}\lb{2.225}
&i \partial_t S^1 + H_{\Sigma^1}(t) S^1 = F(\Sigma^1, S^1) + O^2(R^1, \pi^1) \\
&\dot f_{\Sigma^1} = F_f(\Sigma^1, S^1) + O_f^2(R^1, \pi^1),\ f \in \{\alpha, \Gamma, v_k, D_k\},
\end{aligned}\ee
where $O^2$, $O_f^2$ are bounded by
\be
\|O^2(R^1, \pi^1)\|_{\langle t \rangle L^2_t W^{1/2, 6/5}_x} + \|O_f^2(R^1, \pi^1)\|_{\langle t \rangle L^1_t} \les \|(R^1, \pi^1)\|_X^2.
\ee
By (\ref{2.216}), $S^1$ fulfills the approximate orthogonality relation
$$
\|\langle S^1(t), \partial^*_fW_{\Sigma^1}(t) \rangle\|_{\langle t \rangle L^{\infty}_t} \les \|(R^1, \pi^1)\|_X^2.
$$
Comparing the equation system (\ref{2.225}) giving $S^1$ and $\Sigma^1$ to (\ref{ec_r0}) satisfied by $R$ and $\pi$, as in Lemma \ref{lemma_5} we obtain that
$$
\|(R, \pi) - (S^1, \Sigma^1)\|_Y \les \|(R^1, \pi^1)\|_X^2 \les  \|R_0\|_{\dot H^{1/2}}^2.
$$

{\bfseries Higher-order terms.} We repeat this process for the higher-order terms in the expansion of $R$, $\pi$, and $h$. 

Let $n \geq 1$ and consider a variation $\pi(t) = \pi^0 + \delta \pi(t)$ of $\pi^0$. By Lemma \ref{lemma_2.10}, solitons are analytic functions of their parameters: in any Schwartz seminorm $\mc S_n$,
\be
\|d^n_{\pi} W(\pi^0)\, \delta \pi\|_{\mc S_n} \les C_2^n n! (\|\delta \dot \pi\|_{L^1_t} + |\delta \pi(0)|)^n.
\ee
By (\ref{analitic_sol}) only derivatives in $\alpha$ and $v_k$ increase factorially:
$$
\|\partial^{\beta_{\alpha}}_{\alpha} \partial^{\beta_{\Gamma}}_{\Gamma} \partial^{\beta_{v_k}}_{v_k} \partial^{\beta_{D_k}}_{D_k} w(p)\|_{\mc S_n} \les C_2^{|\beta|} \prod_{f \in \{\alpha, v_k\}} \beta_f!.
$$
Recall that $W_{\pi^0}$ given by (\ref{1.1}) and (\ref{w0}) has the form
$$\begin{aligned}
W_{\pi^0}(t) &= \bpm w_{\pi^0}(t) \\ \ov w_{\pi^0}(t) \epm,\\
w_{\pi^0}(t) &= w\Big(\alpha^0(t), \Gamma^0(t) + \int_0^t ((\alpha^0(s))^2-|v^0(s)|^2) \dd s, v^0(t), D(t) + 2\int_0^t v^0(s) \dd s \Big)
\end{aligned}$$
We obtain for $W_{\pi^0}$, $\partial_f W_{\pi^0}$, $F^+(W_{\pi^0})$, and all other quantities that depend on $W_{\pi^0}$ the following explicit expression of analyticity: for any multiindex $\beta = (\beta_{\alpha}, \beta_{\Gamma}, \beta_{v_k}, \beta_{D_k})$
\be\lb{2.229}
\|\partial^{\beta_{\alpha}}_{\alpha} \partial^{\beta_{\Gamma}}_{\Gamma} \partial^{\beta_{v_k}}_{v_k} \partial ^{D_k}_{D_k} W_{\pi^0}(t)\, \delta \pi\|_{\mc S_n} \leq C_1 C_2^{|\beta|} \prod_{f \in \{\alpha, v_k\}} \beta_f! \prod_{f \in \{\Gamma, D_k\}} \langle t \rangle^{\beta_f} (\|\delta \dot \pi\|_1 + |\delta \pi(0)|)^{|\beta|}.
\ee
While in $\alpha$ and $v_k$ $W_{\pi^0}(t)$ is analytic on a strip, in $\Gamma$ and $D_k$ $W_{\pi^0}(t)$ extends to an analytic function of exponential type on the whole complex plane. This stronger property is necessary in the sequel.

Pick $a$ and $a_1$ such that $a< a_1 \leq\sigma(\alpha(t))$, for any time $t$, where $\alpha(t)$ is the scaling component of $\pi(t)$ and $\pi$ is any path admissible in this proof.

Recall that the weights $A_n(t)$ given by (\ref{2.230}) have the form
$$
A_n(t) = \sum_{j=0}^n \frac{\langle at \rangle^j}{j!} < e^{\langle at \rangle}
$$
and the property that $\sum_{j=0}^n A_j(t) A_{n-j}(t) \leq C^n A_n(t)$.

By (\ref{2.229}), $W_{\pi^0}$ then has a Taylor series $W_{\pi^0}(t) = \sum_{n=0}^\infty W_{\pi^0}^n(t)$, such that in any Schwartz seminorm $(\mc S_n)_x$
$$
\|W^n_{\pi^0}(t)(\delta \pi)\|_{A_n(t) L^{\infty}_t (\mc S_n)_x} \leq C_1 C_2^n (\|\delta \dot \pi\|_1 + |\delta \pi(0)|)^n.
$$
This yields a power series expansion for $V_{\pi^0}$: $V_{\pi^0}(t) = \sum_{n=0}^\infty V_{\pi^0}^n(t)$, such that
$$
\|V^n_{\pi^0}(\delta \pi)\|_{A_n(t) L^{\infty}_t (\mc S_n)_x} \leq C_1 C_2^n (\|\delta \dot \pi\|_1 + |\delta \pi(0)|)^n.
$$
In this weighted space setting, let $m_1 + \ldots + m_n = m$ and take $n$ path variations $\delta \pi_1$ to $\delta \pi_n$ such that $\delta \pi_j(0) = 0$, $j=\ov{1, n}$. Then in any Schwartz seminorm $S_N$
$$\begin{aligned}
&\|V^n_{\pi^0}(\delta \pi_1, \ldots, \delta \pi_n)\|_{A_{m+n}(t) L^{\infty}_t (\mc S_N)_x} \les \\
&\les C_1^n \big(\|\delta \dot \pi_1\|_{A_{m_1}(t) L^1_t} + |\delta \pi_1(0)|\big) \cdot \ldots \cdot \big(\|\delta \dot \pi_n\|_{A_{m_n}(t) L^1_t} + |\delta \pi_n(0)|\big).
\end{aligned}$$
Starting with the explicit form of $F(R, \pi)$ and $F_f(R, \pi)$, note that the zeroth- and first-order terms cancel:
$$\lb{2nd_order}
F(R^0, \pi^0) = dF(R^0, \pi^0) = 0,\ F_f(R^0, \pi^0) = dF_f(R^0, \pi^0) = 0.
$$
This is due to the fact that $F(R, \pi)$ and $F_f(R, \pi)$ arise from linearizing a nonlinear term of order higher than two around $R^0 = 0$, $\dot \pi^0 = 0$.

We obtain the power series expansion $F = \sum_{m=0}^n F^m$, $F_f = \sum_{m=0}^n F_f^m$. $F^n$ and $F_f^n$ are $n$-linear and
$$\begin{aligned}
\|F^n\|_{A_{n-2}(t) L^2_t \dot W^{1/2, 6/5}_x} &\leq C_1 C_2^n (\|(\delta R, \delta \pi)\|_X + |\delta \pi(0)|)^n, \\
\|F_f^n\|_{A_{n-2}(t) L^1_t} &\leq C_1 C_2^n (\|(\delta R, \delta \pi)\|_X + |\delta \pi(0)|)^n. 
\end{aligned}$$

The gain from $n$ to $n-2$ takes place because the highest $\dot \pi$ derivative present in the expressions of $F^n$ and $F_f^n$ has order $n-2$. In turn, the reason for this is that the first nonzero derivatives of $F$ or $F_f$, in either $R$ or $\dot \pi$, are the second-order ones, and each derivative of order $n-k$ is paired with derivatives of order at most $k$.

Consider the scale of weighted spaces
$$
X_n=\{(R, \pi) \mid \|R\|_{A_n(t) L^{\infty}_t \dot H^{1/2}_x \cap A_n(t) L^2_t W^{1/2, 6}_x} + \|\dot \pi\|_{A_n(t) L^1_t} + |\pi(0)| < \infty \}.
$$
Let $m_1 + \ldots + m_n = m$ and note an extended analyticity property for $F^n$:
\be\begin{aligned}
\|F^n\|_{A_{m+n-2}(t) L^2_t \dot W^{1/2, 6/5}_x} &\leq C_1 C_2^n \|(\delta R_1, \delta \pi_1)\|_{X^{m_1}} \cdot \ldots \cdot \|(\delta R_n, \delta \pi_n)\|_{X^{m_n}} \\
\|F^n\|_{A_{m+n-2}(t) L^1_t} &\leq C_1 C_2^n \|(\delta R_1, \delta \pi_1)\|_{X^{m_1}} \cdot \ldots \cdot \|(\delta R_n, \delta \pi_n)\|_{X^{m_n}}.
\end{aligned}\ee
In the Taylor expansion of $F(\pi(R_0), R(R_0))$, where $\pi(R_0) \sim \sum_{m=0}^\infty \pi^m(R_0)$ and $R(R_0) \sim \sum_{m=0}^\infty R^m(R_0)$ are functions of $R_0$, the $n$-th-order term in $R_0$ is
\be
\sum_{j=1}^n \sum_{n_1 + \ldots + n_j=n} F^j\big((R^{n_1}, \pi^{n_1}), \ldots, (R^{n_j}, \pi^{n_j})\big).
\ee
Writing only the $n$-th-order terms in the Taylor expansion of (\ref{ec_r0}), we obtain the subsequent equation for $R^n$ and $\pi^n$:
\be\begin{aligned}\lb{2.231}
i \partial_t R^n + H_{\pi^0}(t) R^n &= \sum_{j=2}^n \sum_{n_1 + \ldots + n_j=n} F^j((R^{n_1}, \pi^{n_1}), \ldots, (R^{n_j}, \pi^{n_j})) \\
&- \sum_{j=1}^n \sum_{n_1 + \ldots + n_j + \tilde n=n} (\partial_{\pi}^j V_{\pi^0})(t) (\pi^{n_1}, \ldots, \pi^{n_j}) R^{\tilde n} := T_1 - T_2 \\
\dot f^n &= \sum_{j=2}^n \sum_{n_1 + \ldots + n_j=n} F_f^j((R^{n_1}, \pi^{n_1}), \ldots, (R^{n_j}, \pi^{n_j})) := T_3.
\end{aligned}\ee
The initial conditions are $R^n(0)$ and $\pi^n(0)$ given by (\ref{cond_init}).

Since the expansion is around zero, all terms directly containing $R^0$ or $\pi^0$ cancel:
\be\begin{aligned}\lb{2.246}
F^1(\pi^0, R^0) = 0,\ F_f^1(\pi^0, R^0) = 0,\ (\partial_{\pi} V_{\pi^0}(t)\, \pi^n)\, R^0 = 0.
\end{aligned}\ee
Then highest-order terms, i.e.\ containing $R^n$ and $\pi^n$, are absent from $T_1$, $T_2$, and $T_3$, which are then determined by terms of order up to $n-1$.

Finally, we prove by induction that for every $n$
$$
\|(R^n, \pi^n)\|_{X^{n-1}} \leq k_1 k_2^n \|R_0\|_{\dot H^{1/2}}^n.
$$

To this purpose, assume that the \emph{induction hypothesis}
\be\lb{indhyp}
\|(R^{m}, \pi^{m})\|_{X^{m-1}} \leq k_1 k_2^m \|R_0\|_{\dot H^{1/2}}^m
\ee
is true for every $1 \leq m<n$ and some constants $k_1$ and $k_2$ to be set later.

We take $k_1<<1$, then set $k_2$ sufficiently large so that (\ref{indhyp}) holds for $m<n=2$:
$$
\|(R^1, \pi^1)\|_{X^0} \leq k_1 k_2 \|R_0\|_{\dot H^{1/2}}.
$$
Then the inhomogenous terms in (\ref{2.231}) obey the bounds
$$\begin{aligned}\lb{2.248}
\|T_1\|_{A_{n-1}(t) L^2_t \dot W^{1/2, 6/5}_x} = \bigg\|\sum_{j=2}^n \sum_{n_1 + \ldots + n_j=n} F^j((R^{n_1}, \pi^{n_1}), \ldots, (R^{n_j}, \pi^{n_j}))\bigg\|_{A_{n-1}(t) L^2_t \dot W^{1/2, 6/5}_x} \leq \\
\leq C_1 \sum_{j=2}^n (C_2 k_1) ^j k_2^n \|R_0\|_{\dot H^{1/2}}^n
\end{aligned}$$
and likewise for the other two terms, that is
$$\begin{aligned}\lb{2.249}
\|T_2\|_{A_{n-1}(t) L^2_t \dot W^{1/2, 6/5}_x} = \bigg\|\sum_{j=1}^n \sum_{n_1 + \ldots + n_j + \tilde n=n} (\partial_{\pi}^j V_{\pi^0})(t) (\pi^{n_1}, \ldots, \pi^{n_j}) R^{\tilde n} \bigg\|_{A_{n-1}(t) L^2_t \dot W^{1/2, 6/5}_x} \leq \\
\leq C_1 \sum_{j=2}^n (C_2 k_1) ^j k_2^n \|R_0\|_{\dot H^{1/2}}^n,
\end{aligned}$$
respectively
\be\begin{aligned}\lb{2.250}
\|T_3\|_{A_{n-1}(t) L^1_t} = \bigg\| \sum_{j=2}^n \sum_{n_1 + \ldots + n_j=n} F_f^j((R^{n_1}, \pi^{n_1}), \ldots, (R^{n_j}, \pi^{n_j})) \bigg\|_{A_{n-1}(t) L^1_t} \leq \\
\leq C_1 \sum_{j=2}^n (C_2 k_1) ^j k_2^n \|R_0\|_{\dot H^{1/2}}^n.
\end{aligned}\ee

Importantly, summation starts from $j=2$ within $T_1$, $T_2$, and $T_3$  because the first-order differentials vanish by (\ref{2.246}). The powers of $t$ in $T_1$, $T_2$, and $T_3$ 
are at most $n-2$ since
\be
j-2 + \sum_{j=2}^n (n_j-1) = n-2 < n-1.
\ee
Making $k_1$ sufficiently small, the sum $\sum_{j=2}^n (C_2 k_1) ^j$ becomes bounded and uniformly small for all $n$, so
$$
\|T_1\|_{A_{n-1}(t) L^2_t \dot W^{1/2, 6/5}_x} + \|T_2\|_{A_{n-1}(t) L^2_t \dot W^{1/2, 6/5}_x} + \|T_3\|_{A_{n-1}(t) L^1_t} \les \tilde C k_1^2 k_2^n \|R_0\|_{\dot H^{1/2}}^n.
$$

We solve the equation system (\ref{2.231}) for $R^n$ and $\pi^n$ as we did before.

For the modulation path $\pi^n$, (\ref{2.250}) directly shows that $\|\dot \pi^n\|_{A_n(t) L^1_t} \les \|R_0\|_{\dot H^{1/2}}^n$.

As in (\ref{2.63}-\ref{2.67}), we apply a unitary transformation $U(t)$ to (\ref{2.231}) of the form
$$\begin{aligned}
U(t) &= e^{\textstyle\int_0^t(2 v^0(s) \dl + i ((\alpha^0)^2(s)-|v^0(s)|^2) \sigma_3) \dd s} \\
Z^n(t) &= U(t) R^n(t) \\
W(\pi^0(t)) &= U(t) W_{\pi^0}(t).
\end{aligned}$$
In (\ref{2.231}) this transformation takes a particular form, as $\alpha^0(t)$ and $v^0(t)$ are constant.

We split $Z^n$ into its three spectral projections $P_c(t) Z^n(t)$, $P_0(t) Z^n(t)$, and $P_{im}(t) Z^n(t)$ --- and estimate each piece in the weighted Strichartz norm $A_n(t) L^{\infty}_t \dot H^{1/2}_x \cap A_n(t) L^2_t \dot W^{1/2, 6}_x$.

For the continuous spectrum component $P_c(t) Z^n(t)$, Strichartz estimates imply that
\be
\|P_c(t) Z^n(t)\|_{A_n(t) L^{\infty}_t \dot H^{1/2}_x \cap A_n(t) L^2_t \dot W^{1/2, 6}_x} \leq \tilde C k_1^2 k_2^n \|R_0\|_{\dot H^{1/2}}^n.
\ee
Regarding the imaginary component $P_{im}(t) Z^n(t)$, recall the convolution estimates
$$\begin{aligned}
\int_0^t e^{-a_1 (t-s)} A_n(s) \dd s &\leq C A_n(t), \\
\int_t^{\infty} e^{a_1 (t-s)} A_n(s) \dd s &= \int_0^{\infty} e^{-a_1 s} A_n(t+s) \dd s \leq C A_n(t).
\end{aligned}$$
Both estimates hold uniformly in $n$.

Since the right-hand side of (\ref{2.231}) has polynomial growth, Lemma \ref{hyp} shows that there exists a unique subexponential solution to the equation of $P_{im}(t) Z(t)$, for a suitable value of the parameter $h^n$ and then
$$
\|P_{im}(t) Z(t)\|_{A_n(t) L^{\infty}_t \dot H^{1/2}_x \cap A_n(t) L^2_t \dot W^{1/2, 6}_x} \leq \tilde C k_1^2 k_2^n \|R_0\|_{\dot H^{1/2}}^n.
$$

$R^n$ fulfills an approximate orthogonality condition:
$$
\langle R^n, \partial^*_fW_{\pi^0} \rangle + \sum_{j=1}^n \sum_{n_1 + \ldots + n_j + \tilde n=n} \big\langle R^{\tilde n}, (d_{\pi}^j \partial^*_fW_{\pi^0}) (\pi^{n_1}, \ldots, \pi^{n_j}) \big\rangle = 0.
$$
By the induction hypothesis (\ref{indhyp}) we hence obtain a bound for $P_0(t) Z(t)$:
\be
\|P_0(t) Z^n(t)\|_{A_n(t) L^{\infty}_t \dot H^{1/2}_x \cap A_n(t) L^2_t \dot W^{1/2, 6}_x} \leq \tilde C k_1^2 k_2^n \|R_0\|_{\dot H^{1/2}}^n.
\ee

Combining the three estimates, for the unique suitable value of the parameter $h^n$ given by Lemma \ref{hyp}
\be
\|(R^n, \pi^n)\|_{X^{n-1}} \leq \tilde C k_1^2 k_2^n \|R_0\|_{\dot H^{1/2}}^n.
\ee
Setting $k_1 << 1$ we obtain
\be
\|(R^n, \pi^n)\|_{X^{n-1}} \leq k_1 k_2^n \|R_0\|_{\dot H^{1/2}}^n.
\ee

Next, we verify that $R^n = \frac 1 {n!} d^n R$ and $\pi^n = \frac 1 {n!} d^n \pi$. Denote
\be
S^n = R^0 + R^1 + \ldots + R^n,\ \Sigma^n = \pi^0 + \pi^1 + \ldots + \pi^n.
\ee
Up to error terms $O^{n+1}$ and $O_f^{n+1}$, $S^n$ and $\Sigma^n$ solve an equation system of the form (\ref{2.225}):
\be\begin{aligned}\lb{2.237}
&i \partial_t S^n + H_{\Sigma^n}(t) S^n = F(\Sigma^n, S^n) + O^{n+1} \\
&\dot f_{\Sigma^n} = F_f(\Sigma^n, S^n) + O_f^{n+1},\ f \in \{\alpha, \Gamma, v_k, D_k\}.
\end{aligned}\ee
The error terms are of size
$$
\|O^{n+1}\|_{\langle t \rangle^n L^2_t W^{1/2, 6/5}_x} + \|O_f^{n+1}\|_{\langle t \rangle^n L^1_t} \les \|R_0\|_{\dot H^{1/2}}^{n+1}.
$$
Comparing $(S^n, \Sigma^n)$ and $(R, \pi)$ in $Y$ as in Lemma \ref{lemma_5}, then
\be
\|(R, \pi) - (S^n, \Sigma^n)\|_Y \les \|R_0\|_{\dot H^{1/2}}^{n+1}.
\ee
This concludes the proof of analyticity for $R$ and $\pi$.


Finally, recall that $h(R_0, W_0)$ is given by formula (\ref{2.100}), which becomes
\be\begin{aligned}\lb{2.266}
h(R_0, W_0) &= -\int_0^{\infty} e^{-\int_0^t \sigma(W_{\pi}(\tau)) \dd \tau} \big(\langle F,  \sigma_3 F^-(W_{\pi}(t)) \rangle - \\
&- 3 \dot \alpha(t) (\alpha(t))^{-4} \langle R, i \sigma_3 F^-(W_{\pi}(t)) \rangle + \\
&+(\alpha(t))^{-3} \langle R, i \sigma_3 (d_{\pi} F^-(W_{\pi}(t))) \dot \pi(t) \rangle\big) \dd t.
\end{aligned}\ee
Since all the functions in this formula are analytic and grow more slowly than $e^{ta_1}$, we directly obtain a power series expansion for $h$.

Note that in the power series $h=\sum_{n=0}^{\infty} h^n$, each term $h^n$ is the unique value that makes $(R^n, \pi^n)$ bounded in equation (\ref{2.231}). By (\ref{2.266}), $h^n$ is given by an $n$-linear form of $R_0$ for each $n$, one that grows exponentially in norm with $n$.

Using Lemma \ref{lemma_5} to compare $(R, \pi)$ and the sum of the first $n$ terms $(S^n, \Sigma^n)$ given by (\ref{2.237}), we obtain
\be
|h-(h^0 + \ldots + h^n)| \leq C_1 C_2^n \|R_0\|_{\dot H^{1/2}}^{n+1}.
\ee
This construction of $h^n$ proves the analyticity of $h$ more explicitly.
\end{proof}

\subsection{The centre-stable manifold}
Finally, we relate the manifold $\mc N$ issued from Proposition \ref{prop27} to the centre-stable manifold of \cite{bates}.
\begin{proposition}\lb{batjon} $\mc N$ is a $\dot H^{1/2}$ centre-stable manifold for \eqref{NLS} in the sense of Definition \ref{centr} and~\cite{bates}.
\end{proposition}
\begin{proof}
To begin with, we rewrite (\ref{NLS}) to fit it within the stable manifold theory of \cite{bates}.

Consider the soliton $W_{\pi_0}(t)$ parametrized by a constant path, which we take to be $\pi_0(t) = (1, 0, 0, 0)$ with no loss of generality. Thus
\be
W_{\pi_0}(t) = \bpm e^{it} \phi(\cdot, 1) \\ e^{-it} \phi(\cdot, 1) \epm
\ee
for all $t$. We linearize the equation around this constant path. For $R = \Psi - W_{\pi_0}$, equation (\ref{rr}) takes the form
\be
\partial_t R - i H_{\pi_0} R = N(R, W_{\pi_0}).
\ee
Making the substitution $Z = e^{-it\sigma_3} R$, $W_{\pi_0}(t)$ becomes the constant soliton $W(\pi_0(t)):=W_0$ and $Z$ has the equation
\be
i \partial_t Z + H(W_0) Z = N(Z, W_0),
\lb{eq_2188}
\ee
where
\be
H(W_0) := \bpm \Delta+2\phi^2(\cdot, 1) -1 & \phi^2(\cdot, 1) \\ -\phi^2(\cdot, 1) & -\Delta-2\phi^2(\cdot, 1)+1 \epm
\ee
and
\be
N(Z, W_0) = \bpm -|z|^2 z - z^2 \phi(\cdot, 1) - 2|z|^2 \phi(\cdot, 1) \\ |z|^2 z + z^2 \phi(\cdot, 1) + 2|z|^2 \phi(\cdot, 1) \epm.
\ee
Note that the right-hand side terms are at least quadratic in $Z$, due to linearizing around a constant path.

The spectrum of $H$ is $\sigma(H) = (-\infty, -1] \cup [1, \infty) \cup \{0, \pm i \sigma\}$, see Section \ref{spectru}. The stable spectrum is $\{-i\sigma\}$, the unstable spectrum is $\{i\sigma\}$, and $(-\infty, -1] \cup [1, \infty)$ is the centre.



The manifold $\mc N \subset \dot H^{1/2}$ of Definition \ref{def3} 
is invariant under the time evolution (\ref{NLS}) by Corollary \ref{stable}. Let
$$
\tilde{\mc N} := \mc N - W_0 = \{\Psi - W_0 \mid \Psi \in \mc N\},\ W_0 = \bpm \phi(\cdot, 1) \\ \phi(\cdot, 1) \epm.
$$
$\tilde {\mc N}$ is the image of $\mc N$ under the mapping $\Psi \mapsto Z(\Psi) = e^{-it\sigma_3}\Psi - W_0$.

In the sequel we show that $\Psi \in \mc N$ if and only if $Z(\Psi) \in \mc N_{BaJo}$.
We show $\tilde{\mc N}$ is a centre-stable manifold for (\ref{eq_2188}) in the sense of Definition \ref{centr}, relative to a small $\dot H^{1/2}$ neighborhood $\mc V = \{Z \mid \|Z\|_{\dot H^{1/2}} < \delta_0\}$ of the origin.



We verify the three properties listed in Definition \ref{centr}: $\tilde{\mc N}$ is $t$-invariant with respect to $\mc V$, $\pi^{cs}(\tilde{\mc N})$ contains a neighborhood of $0$ in $X^c \oplus X^s$, and $\tilde{\mc N} \cap W^u = \{0\}$.

The $t$-invariance of $\tilde{\mc N}$ within $\mc V$ follows from Corollary \ref{stable}. Indeed, Corollary \ref{stable} is stronger than $t$-invariance, as it holds unconditionally and globally in time.

We next prove the second element of Definition \ref{centr}. Here $\pi^{cs} = P_c + P_0 + P_-$ and
$$
X_c \oplus X^s = \big(P_c(W_0) + P_0(W_0) + P_-(W_0)\big) \dot H^{1/2}.
$$
By Proposition \ref{prop27}, for each $R_0 \in \mc N_{lin}(W_0) = (P_c(W_0) + P_-(W_0)) \dot H^{1/2}$, $R_0 = \pi^{cs}\mc F(R_0, W_0)$, where $\mc F(R_0, W_0) \in \mc N$.

Moreover, let $R_1 = R_0 + P_0(W_0) R_1$, where $R_1 \in X^c \oplus X^s$, $R_0 \in \mc N_{lin}$, and using the exponential map $\exp_{W_0}:T_{W_0} \Sol \to \Sol$ define
$$
G(R_1) = \pi^{cs}\mc F\big(R_0, W_0 + \exp_{W_0}(P_0(W_0) R_1)\big).
$$
$G$ is nonlinear, but it is analytic and $dG(0) = I$. Thus $G$ is locally invertible at $0$, hence $\pi^{cs}$ is surjective on a $\dot H^{1/2}$ neighborhood of $0$.


Lastly, we show that $\tilde{\mc N} \cap W^u = \{0\}$, where $W^u$ is the unstable manifold given by Definition \ref{unstable}. To this purpose we follow the proof of \cite{bec}.

Consider a solution
$$
Z := e^{-it\sigma_3} \Psi - W_0 \in \tilde N \cap W^u
$$
of (\ref{eq_2188}). By Definition \ref{unstable}, $Z^0(t)$ exists for all $t<0$, $\|Z^0(t)\|_{\dot H^{1/2}} < \delta_0$ for some small $\delta_0$ and all $t<0$, and $Z^0$ decays exponentially as $t \to -\infty$, meaning that there exists $C_1$ such that for all $t\leq 0$ $\|Z^0(t)\|_{\dot H^{1/2}} \les e^{C_1 t}$.

Henceforth assume that $Z \not \equiv 0$ in order to obtain a contradiction.

Any rate of decay, rather than exponential decay, will suffice to obtain the contradiction.

Since $\ov {\Psi(0)}$ fulfills Definition \ref{definition_4}, it gives rise to a small asymptotically stable solution $\ov{\Psi(-t)}$ by Proposition \ref{prop27} and Proposition \ref{prop_2.13}.


Thus $\ov{\Psi(-t)}$ fulfills Definition \ref{definition_4}, meaning that $\Psi(t) = e^{it\sigma_3} Z^0(t) + W_{\pi^0}(t)$ is a small asymptotically stable solution of (\ref{NLS}) as $t$ goes to $-\infty$. 

Therefore, for $t \leq 0$ $\Psi = R(t) + W_{\pi}(t)$, the orthogonality condition $P_0(W_{\pi}(t)) R(t) = 0$ is satisfied, and $\|R(t)\|_{\dot H^{1/2}} \les \delta$. Endpoint Strichartz estimates imply that
\be
\|R\|_{L^{\infty}_t(-\infty, T] \dot H^{1/2}_x \cap L^2_t(-\infty, T] \dot W^{1/2, 6}_x} \les e^{C_1 T}. 
\ee
The path $\pi$ satisfies modulation equations, so $\|\dot \pi\|_{L^1_t(-\infty, T]} \les e^{C_2 T}$ as well.

Since $\Psi(0) \in \mc N$, $\pi(t)$ exists for all $t \geq 0$ and $R$ extends to a solution bounded in the Strichartz norm:
\be
\|R\|_{L^{\infty}_t[0, \infty) \dot H^{1/2}_x \cap L^2_t[0, \infty) \dot W^{1/2, 6}_x} \leq C \delta.
\ee
Let $U(t)$ be the family of isometries defined for $\pi(t) = (\alpha(t), \Gamma(t), v(t), D(t))$ by
$$
U(t) = e^{\textstyle\int_0^t(2 v(s) \dl + i (\alpha^2(s)-|v(s)|^2) \sigma_3) \dd s}
$$
and set $Z(t) = U(t) R(t)$. $Z(t)$ satisfies the equation
$$
i \partial_t Z - H(t) Z = F.
$$
The orthogonality condition holds for $Z$ by construction, so $P_0(t) Z(t) = 0$.

Let $Z(t) = P_c(t) Z(t) + P_{im}(t) Z(t)$ and
$$
\delta(T) := \|Z\|_{L^{\infty}_t(-\infty, T] \dot H^{1/2}_x \cap L^2_t(-\infty, T] \dot W^{1/2, 6}_x} + \|\dot {\pi}\|_{L^1_t(-\infty, T]}.
$$
Note that $\delta(t) \to 0$ as $t \to -\infty$, so we can assume $\delta(t)$ to be arbitrarily small.

By endpoint Strichartz estimates, for $T \leq 0$
\be\lb{bates_c}
\begin{aligned}
\|P_c(t) Z(t)\|_{L^2_t(-\infty, T] \dot W^{1/2, 6}_x \cap L^{\infty}_t(-\infty, T] \dot H^{1/2}_x} &\les \|F\|_{L^2_t(-\infty, T] \dot W^{1/2, 6/5}_x + L^{1}_t(-\infty, T] \dot H^{1/2}_x} \\
&\les \delta(T) \|Z\|_{L^{\infty}_t (-\infty, T] \dot H^{1/2}_x},
\end{aligned}
\ee
because the right-hand side contains only quadratic and higher degree terms.

Let $P_{im}(t) Z(t) = h^-(t) F^-(t) + h^+(t) F^+(t)$. Since $Z(t)$ is bounded as $t \to -\infty$, applying Lemma \ref{hyp} to $\ov{P_{im}(-t) Z(-t)}$ we obtain
\be\begin{aligned}
h^-(t) &= -\int_{-\infty}^t e^{-\sigma(t-s)} N_-(s) \dd s\\
h^+(t) &= e^{(t-T)\sigma} b_+(T) - \int_{t}^T e^{(t-s)\sigma} N_+(s) \dd s.
\end{aligned}\lb{PplusU}\ee
Therefore
$$
\|P_{im}(t) Z(t)\|_{L^{\infty}_t(-\infty, T] \dot H^{1/2}_x} \leq \|P_+ Z(T)\|_{\dot H^{1/2}_x} + \delta(T) \|Z\|_{L^{\infty}_t(-\infty, T] \dot H^{1/2}_x}.
$$
Combining (\ref{bates_c}) and (\ref{PplusU}), we obtain
\be\lb{asdfg}
\|Z\|_{L^{\infty}_t(-\infty, T] \dot H^{1/2}_x} \les \delta(T) \|Z\|_{L^{\infty}_t(-\infty, T] \dot H^{1/2}_x} + \|P_+ Z(T)\|_{\dot H^{1/2}}.
\ee
For sufficiently negative $T_0$, it follows that $\|Z(T)\|_{\dot H^{1/2}} \les \|P_+ Z(T)\|_{\dot H^{1/2}}$, for any $T \leq T_0$. The converse is also true, so the two norms are comparable.

Next, we estimate $(I-P_+(t)) Z(t)$, in a manner that parallels (\ref{bates_c}) and (\ref{PplusU}). Using $\|Z(T)\|_{\dot H^{1/2}} \les \|P_+ Z(T)\|_{\dot H^{1/2}}$ in the evaluation leads to this result for $(I-P_+(t)) Z(t)$:
\be\lb{2.294}
\|(I-P_+(t)) Z(t)\|_{\dot H^{1/2}} \les \delta(t) \|P_+Z(t)\|_{\dot H^{1/2}}.
\ee


Recall that $\Psi(0) \in \mc N$. Then, when in Definition \ref{definition_4} of $\mc N$ $\delta_0>0$ is sufficiently small, $\|Z(t)\|_{\dot H^{1/2}}$ is bounded from below as $t \to \infty$. Indeed, to a first order, $Z(t)$ is given by the free time evolution of the initial data; all other terms are quadratic in size. The first-order term is bounded away from zero, unless $Z(0) \equiv 0$.

On the other hand, by Lemma \ref{hyp}
\be
\|P_+ Z(t)\|_{\dot H^{1/2}} \leq \int_t^{\infty} e^{(t-s)\sigma} |N_+(s)| \dd s.
\ee
Hence $\|P_+ Z(t)\|_{\dot H^{1/2}}$ goes to zero and becomes arbitrarily small as $t \to \infty$.

Lemma 2.4 of \cite{bates} implies that if the ratio $\|P_+ Z(T_0)\|_{\dot H^{1/2}}/\|(I-P_+) Z(T_0)\|_{\dot H^{1/2}}$ is small enough, it will stay bounded when $t \leq T_0$. The proof of this lemma is based on Gronwall's inequality.

However, this contradicts (\ref{2.294}), as
\be
\|(I-P_+(t)) Z(t)\|_{\dot H^{1/2}}/\|P_+(t) Z(t)\|_{\dot H^{1/2}}  \leq C \delta(t)
\ee
goes to $0$ as $t$ approaches $-\infty$. This contradiction shows that  $Z \equiv 0$.

This proves that $\tilde{\mc N} \cap W^u = \{0\}$: there are no exponentially unstable solutions in $\tilde{\mc N}$ in the sense of \cite{bates}.

By Definition \ref{centr}, this shows that $\tilde {\mc N}$ is a centre-stable manifold.
\end{proof}

\subsection{Scattering}\lb{sect_sc}
Strichartz space bounds and estimates imply that the radiation term scatters like the solution of the free equation, meaning
\be
r(t) = e^{-it\Delta} r_{free} + o_{\dot H^{1/2}}(1)
\ee
for some $r_{free} \in \dot H^{1/2}$.

As a reminder, $R$ satisfies the equation (\ref{rr})
\be\lb{2.196}
i \partial_t R - H_{\pi}(t) R = F,
\ee
where $H_{\pi}(t) = \Delta \sigma_3 + V_{\pi}(t)$, and $R$ has finite Strichartz norm,
\be
\|R\|_{L^{\infty}_t \dot H^{1/2}_x \cap L^2_t \dot W^{1/2, 6}_x} < \infty,
\ee
while $F$ has finite dual Strichartz norm,
\be
\|F\|_{L^2_t \dot W^{1/2, 6/5}_x} < \infty.
\ee
Rewrite (\ref{2.196}) as
\be
i \partial_t R - \Delta \sigma_3 R = F - V_{\pi}(t) R.
\ee
By Duhamel's formula,
\be
R(t) = e^{-it\Delta \sigma_3} R(0) -i \int_0^t e^{-i(t-s)\Delta \sigma_3} (F(s) - V_{\pi}(s) R(s)) \dd s.
\ee
Let
\be
R_{free} = R(0) -i \int_0^{\infty} e^{it\Delta \sigma_3} (F(t) - V_{\pi}(t) R(t)) \dd t.
\ee
Then
\be
R(t) - e^{-it\Delta \sigma_3} R_{free} = ie^{-it\Delta \sigma_3} \int_t^{\infty} e^{is\Delta \sigma_3} (F(s) - V_{\pi}(s) R(s)) \dd t.
\ee
Note that
\be\begin{aligned}
& \|F - V_{\pi} R\|_{L^2_t \dot W^{1/2, 6/5}_x} \leq \\
\leq & \|F\|_{L^2_t \dot W^{1/2, 6/5}_x} + \|V_{\pi}\|_{L^{\infty}_t (\dot W^{1/2, 6/5-\epsilon}_x \cap \dot W^{1/2, 6/5+\epsilon}_x)} \|R\|_{L^2_t \dot W^{1/2, 6}_x} < \infty.
\end{aligned}\ee
implies
\be
\lim_{t \to \infty} \|\chi_{[t, \infty)}(s) (F(s) - V_{\pi}(s) R(s))\|_{L^2_t \dot W^{1/2, 6/5}_x} = 0.
\ee
$e^{-it\Delta \sigma_3}$ being an isometry, it follows that $R(t) - e^{-it\Delta \sigma_3} R_{free} \to 0$ in $\dot H^{1/2}$.

This leads to the same conclusion in the scalar case, after passing to the scalar functions $r$ and $r_{free}$, where $R = \bpm r \\ \ov r \epm$ and $R_{free} = \bpm r_{free} \\ \ov r_{free} \epm$.

\section{Linear estimates}
\subsection{Notations and basic results}

We prove a dispersive estimate for the linear time-dependent equation that includes terms of the form $v(t) \dl Z(t)$, where $\|\dot v(t)\|_{L^1_t}$ is small. $Z$ is a solution of the Schr\"{o}dinger equation, of finite Strichartz norm.

Such results have been proved in \cite{bec2}, \cite{bec3}, in a sharp, scaling-invariant setting. We adapt those results to the current problem by proving they also hold in $\dot H^{1/2}$.

Consider the linear Schr\"{o}dinger equation in $\R^3$
\be
i \partial_t Z + H Z = F,\ Z(0) \text{ given},
\ee
where
\be
H = H_0 + V = \bpm \Delta-\mu & 0 \\ 0 & -\Delta+\mu \epm + \bpm W_1 & W_2 \\ -W_2 & -W_1 \epm.
\lb{3.2}
\ee
$W_1$ and $W_2$ are real-valued. We also assume that $W_1$ and $W_2$ are of Schwartz class $\mc S$.

Following the nonlinear setting, we take $H$ with no eigenvalues, nor resonances in $(-\infty, -\mu] \cup [\mu, \infty)$.


The resolvent of $H_0 = -\Delta$, defined by $R_0(\lambda) := (H_0 - \lambda)^{-1}$, is complex analytic on the Riemann surface of $\sqrt {\lambda-\mu} + \sqrt{\lambda+\mu}$, so we express it as a function of $\sqrt{\lambda-\mu}$. Then it has the integral kernel
\be\lb{eq_3.56}
R_0(\lambda^2+\mu)(x, y) = \frac 1 {4\pi} \bpm -\frac {e^{-\sqrt{\lambda^2+2\mu} |x-y|}}{|x-y|} & 0 \\ 0 & \frac {e^{i \lambda |x-y|}}{|x-y|} \epm.
\ee

In the complex plane, $R_0(\lambda)$ is an analytic function on $\C \setminus \sigma(H_0)$. By the limiting absorption principle, it extends continuously up to the boundary in the closed lower half-plane or upper half-plane, but not in both at once, due to the jump discontinuity on $(-\infty, -\mu] \cup [\mu, \infty)$.

Let $\mc S$ be the Schwartz space --- a locally convex space defined by the family of seminorms
$$
\|f\|_{\mc S_n} := \sum_{k=1}^n \|\langle x \rangle^{n-k} \langle \Delta \rangle^k f\|_2.
$$
$f \in \mc S$ means that, for every $n$, $|f|_{\mc S_n}$ is finite.

Summarizing results of \cite{schlag} and \cite{erdsch2}, we list the spectral properties of $H$.
\begin{proposition}\lb{hspectru} Let $H = H_0 + V$ be given by \eqref{3.2} and take $V \in \mc S$. Then $\sigma(H) \subset \R \cup i \R$ and $\sigma_{ac}(H) = (-\infty, -\mu] \cup [\mu, \infty)$.

Assume that $H$ has no eigenvalues or resonances in $(-\infty, -\mu] \cup [\mu, \infty)$. Then the point spectrum of $H$ consists of simple eigenvalues, with the possible exception of $0$, where $H$ may have a nontrivial Jordan form.

The finitely many Riesz projections $P_{\zeta_j}$, $1 \leq j \leq n$, corresponding to the eigenvalues $\zeta_j$ are given by
$$
P_{\zeta_j} = \frac 1 {2\pi i} \int_{|z-\zeta_j| = \epsilon} R_V(z) \dd z.
$$
$P_{\zeta_j}$ and $P_{\zeta_j}^*$ have finite rank and their ranges are spanned by Schwartz-class functions.

The continuous spectrum projection $P_c$ is given by $P_c = I - \sum_{j=1} P_{\zeta_j}$.
\end{proposition}
We also state the limiting absorption principle, following \cite{agmon} and \cite{ionsch}, thusly.
\begin{proposition}
Let $H = H_0 + V$ be as in (\ref{3.2}) and assume that $V \in \mc S$. Assume that $H$ has no eigenvalues or resonances embedded in $(-\infty, -\mu] \cup [\mu, \infty)$. Then
\be
\sup_{\lambda \in \R} \|R_V(\lambda \pm i0)\|_{\mc B(\dot W^{1/2, 6/5}, \dot W^{1/2, 6})} < \infty.
\ee
\end{proposition}
\begin{proof}
Write $V=V_1 V_2$, where
\be
V_1 = \sigma_3 \bpm W_1 & W_2 \\ W_2 & W_1 \epm^{1/2},\ V_2 = \bpm W_1 & W_2 \\ W_2 & W_1 \epm^{1/2},\ \sigma_3 = \bpm 1 & 0 \\ 0 & -1 \epm.
\ee
Note that
\be\begin{aligned}
R_V(\lambda) &= R_0(\lambda) - R_0(\lambda) V R_0(\lambda) + R_0(\lambda) V_1 (I + V_2 R_0(\lambda) V_1)^{-1} V_2 R_0(\lambda),
\end{aligned}\ee
Thus $(I + V_2 R_0(\lambda\pm i0) V_1)^{-1} \in L^{\infty}_\lambda \mc B(\dot H^{1/2}, \dot H^{1/2})$ implies $R_V(\lambda\pm i0) \in L^{\infty}_\lambda \mc B(\dot W^{1/2, 6/5}, \dot W^{1/2, 6})$.

Since $V_2 R_0(\lambda\pm i0) V_1$ is compact on $\dot H^{1/2}$, by Fredholm's alternative, this is implied by the nonexistence of a function $f \in \dot H^{1/2}$ such that
\be
f + V_2 R_0(\lambda \pm i0) V_1 f = 0.
\ee
Indeed, $V_1 f$ would have to be an eigenstate or resonance for $H$ at $\lambda \in (-\infty, -\mu] \cup [\mu, \infty)$, contradicting our spectral assumption. Note that only $\pm \mu$ could actually be resonances.

Finally, the uniform boundedness of $\|V_2 R_0(\lambda\pm i0) V_1\|_{\mc B(\dot H^{1/2}, \dot H^{1/2})}$ follows from the norm-continuity of $V_2 R_0(\lambda\pm i0) V_1$ and the fact that
\be
\lim_{\lambda \to \infty} \|V_2 R_0(\lambda\pm i0) V_1\|_{\mc B(\dot H^{1/2}, \dot H^{1/2})} = 0.
\ee
\end{proof}

\noindent{\bfseries Notations.} The computations take place in Lebesgue and Sobolev spaces of functions on $\R^3 \times \R$ and occasionally in Lorentz spaces $L^{p, q}$, for which see \cite{bec3}.

$L^p$ denote Lebesgue spaces, with norm $\|f\|_p$, $1 \leq p \leq \infty$. When $n$ is an integer, Sobolev spaces of order $n$ are defined by
\be
\|f\|_{W^{n, p}} = \bigg(\sum_{|\alpha| \leq n} \|\partial^{\alpha} f\|_p^p\bigg)^{1/p}
\ee
for $1 \leq p < \infty$ and $\|f\|_{W^{n, \infty}} = \sup_{|\alpha| \leq n} \|\partial^{\alpha} f\|_{\infty}$ when $p=\infty$.

Homogenous and inhomogenous Sobolev spaces of fractional order, $\dot W^{s, p}$ and $W^{s, p}$, are defined by interpolation:
\be
\|f\|_{W^{s, p}} = \big\|\langle \dl \rangle^s f\big\|_p, \text{ respectively } \|f\|_{\dot W^{s, p}} = \big\||\dl|^s f\big\|_p.
\ee
Here $\langle \dl \rangle^s$ and $|\dl|^s$ denote Fourier multipliers:
$$
\langle \dl \rangle^s f = \big((1+|\xi|^2)^{s/2} \widehat f(\xi)\big)^{\vee},\ |\dl|^s f = \big(|\xi|^s \widehat f(\xi)\big)^{\vee}.
$$

When $p=2$, we denote $H^s := W^{s, 2}$ and $\dot H^s := \dot W^{s, 2}$.

$\mc B(X, Y)$ is the Banach space of bounded operators from $X$ to $Y$.

Strichartz estimates are proved in mixed space-time norms of the form
\be
\|f\|_{L^p_t \dot W^{s, q}_x} = \Big(\int_{-\infty}^{\infty} \|f(x, t)\|_{\dot W^{s, q}_x}^p \dd t\Big)^{1/p}.
\ee

\subsection{Strichartz estimates} We start by recalling the connection between the free evolution $e^{it H_0}$ and the resolvent $R_0(\lambda) = (H_0 - \lambda)^{-1}$ of $H_0$.
%

When $H = -\Delta + V$, the Fourier transform of $e^{itH}$ exists on a strip $\Im \lambda < -y_0$, by Gronwall's inequality.
\begin{lemma}\lb{lemma_24}
Assume that $V \in L^{\infty}$ and $H = H_0 + V$ has the form \eqref{3.2}. Then the~equation
\be
i \partial_t Z + H Z = F,\ Z(0) \text{ given},
\lb{3.58}\ee
admits a weak solution $Z \in L^{\infty}_{loc} L^2_t$ for $Z(0) \in L^2$ and $F \in L^{\infty}_t L^2_x$. For $t \geq 0$
\be
\|Z(t)\|_2 \leq C e^{t \|V\|_{\infty}} \|Z(0)\|_2 + \int_0^t e^{(t-s) \|V\|_{\infty}} \|F(s)\|_2 \dd s.
\lb{3.59}\ee
Furthermore, $R_V(\lambda) \in \mc B(L^{6/5, L^6})$ is the Fourier transform of $e^{it H}$ for $\Im \lambda < -\|V\|_{\infty}$:
\be\lb{3.14}
\lim_{\rho \to \infty} \int_0^{\rho} e^{-it\lambda} e^{it H} f \dd t = i R_V(\lambda) f.
\ee
\end{lemma}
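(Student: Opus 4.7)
For existence I would construct $Z$ as the unique fixed point in $C([0,T]; L^2)$ of the Duhamel equation
\be\nonumber
Z(t) = e^{it\mc H_0} Z(0) - i \int_0^t e^{i(t-s)\mc H_0}\bigl(F(s) - V Z(s)\bigr) \dd s.
\ee
Since $\mc H_0$ is selfadjoint on $L^2 \times L^2$, $e^{it\mc H_0}$ is unitary, so this defines an affine map on $C([0,T]; L^2)$ whose linear part has operator norm at most $T\|V\|_{\infty}$. Choosing $T\|V\|_{\infty} < 1$ supplies a unique local weak solution by Banach's theorem, which extends globally in time by iteration; such a $Z$ is the unique weak solution of \eqref{3.58} in $L^{\infty}_{loc}([0,\infty); L^2)$.

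For the estimate, taking $L^2$ norms in the Duhamel identity and using unitarity gives
\be\nonumber
\|Z(t)\|_2 \leq \|Z(0)\|_2 + \int_0^t \|F(s)\|_2 \dd s + \|V\|_{\infty} \int_0^t \|Z(s)\|_2 \dd s,
\ee
and Gronwall's inequality converts this into \eqref{3.59}. In particular, specializing to $F = 0$ yields $\|e^{it\mc H}\|_{L^2 \to L^2} \leq C e^{t\|V\|_{\infty}}$ for $t \geq 0$, which by the Hille--Yosida theorem places $\sigma(\mc H) \subset \{\Im z \geq -\|V\|_{\infty}\}$; hence $R_V(\lambda) = (\mc H - \lambda)^{-1}$ is well-defined whenever $\Im \lambda < -\|V\|_{\infty}$.

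For the Laplace transform identity \eqref{3.14}, given $f \in L^2$ I would set $u_{\rho}(f) = \int_0^{\rho} e^{-is\lambda} e^{is\mc H} f \dd s$. Using the closedness of $\mc H - \lambda$ so that it may be pulled through the Bochner integral --- equivalently, approximating $f$ by Schwartz data and passing to the limit using the uniform operator bound just derived --- one obtains
\be\nonumber
i(\mc H - \lambda) u_{\rho}(f) = e^{i\rho(\mc H - \lambda)} f - f,
\ee
so $u_{\rho}(f) = -i\, R_V(\lambda)\bigl(e^{i\rho(\mc H - \lambda)} f - f\bigr)$. Under the hypothesis $\Im \lambda < -\|V\|_{\infty}$ one has $\|e^{i\rho(\mc H - \lambda)} f\|_2 \leq C e^{\rho(\|V\|_{\infty} + \Im \lambda)} \|f\|_2 \to 0$, so $u_{\rho}(f) \to i R_V(\lambda) f$ uniformly in $\|f\|_2 \leq 1$, which is precisely \eqref{3.14}.

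The only mild obstacle in this program is the commutation of the unbounded operator $\mc H - \lambda$ with the integral sign for data $f$ not in $\Dom(\mc H)$; this is handled routinely by the Bochner-integral/closedness argument above, and no further input is needed since the $L^2$ bound \eqref{3.59} simultaneously provides the existence of the semigroup, the spectral localization of $\mc H$, and the decay that makes the Laplace integral converge.
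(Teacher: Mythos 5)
Your proposal is correct and follows essentially the same route as the paper: a Duhamel/contraction argument on short time intervals (with length depending only on $\|V\|_{\infty}$) extended globally, Gronwall for the bound (\ref{3.59}), and the identity $\int_0^{\rho} e^{-it\lambda}e^{it\mc H}f\,dt = iR_V(\lambda)(I-e^{-i\rho\lambda}e^{i\rho\mc H})f$ together with the exponential decay of the boundary term for $\Im\lambda < -\|V\|_{\infty}$, which is exactly how the paper derives the analogous free identity (\ref{3.5}) that it invokes for (\ref{3.14}).
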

We refer the reader to Lemma 2.10 in \cite{bec3} for the proof of this statement.

In order to obtain Strichartz estimates in the time-independent case, we use the following representation formula, proved in \cite{schlag} under more restrictive assumptions.
\begin{lemma}\lb{lemma_3.4} Let $V \in L^1 \cap L^{\infty}$  and $H$ be given by \eqref{3.2}. Assume that $H$ has no eigenstates or resonances embedded in its essential spectrum $\sigma_{ac}(H) = (-\infty, -\mu] \cup [\mu, \infty)$.

Then for any $f$, $g \in L^2$ $\big\langle \big(R_V(\lambda-i0) - R_V(\lambda+i0)\big) f, g\big\rangle$ is absolutely integrable and
\be\lb{3.16}
\langle P_c f, g \rangle = \frac 1 {2 \pi i} \int_{(-\infty, -\mu] \cup [\mu, \infty)} \big\langle \big(R_V(\lambda-i0) - R_V(\lambda+i0)\big) f, g \big\rangle \dd \lambda.
\ee
\end{lemma}
We refer the reader to Lemma 2.12 in \cite{bec3} for the proof.

From this representation, we derive the necessary $\dot H^{1/2}$ time-dependent Strichartz estimates. We also require the following technical lemma.
\begin{lemma}\lb{lemma32}
Consider $H = H_0 + V$ as in (\ref{3.2}) such that $V$ is of Schwartz class, $V \in \mc S$. Assume that $H$ has no eigenvalues or resonances embedded in $\sigma(H_0)$. Then there exists a decomposition
\be
V - P_p H = \tilde V_1 \tilde V_2,
\ee
where $P_p = I - P_c$, such that $\tilde V_1 \in \mc B(L^2, \langle x \rangle^{-\tilde N}L^2)$, $\tilde V_2 \in \mc B(\langle x \rangle^{\tilde N} L^2, L^2)$.
Moreover,
\be
\tilde V_1 \in \mc B(\dot H^{1/2}, \langle x \rangle^{-\tilde N} \dot H^{1/2}),\ \tilde V_2 \in \mc B(\langle x \rangle^{\tilde N} \dot H^{1/2}, \dot H^{1/2}).
\ee
\end{lemma}

\begin{proof} Take, for some large $n$,
\be\begin{aligned}
\tilde V_1 = (V - P_p H) \langle x \rangle^n,\ \tilde V_2 = \langle x \rangle^{-n}.
\end{aligned}\ee
$P_p H$ and $P_p^* H^*$ are finite-rank operators, whose ranges are spanned by the eigenfunctions of $H$, respectively of $H^*$.

All these eigenfunctions decay exponentially due to Agmon's bound \cite{agmon} and are smooth because the potential $V$ itself is smooth.
\end{proof}
This construction suffices in the case of Schwartz-class potentials. For a more general approach, see Lemma 2.13 in \cite{bec3}.

\begin{theorem}[$\dot H^{1/2}$ Strichrtz estimates]\lb{theorem_26}
Let $Z$ be a solution of the linear Schr\"{o}dinger equation
\be
i \partial_t Z + H Z = F,\ Z(0) \text{ given}.
\ee
Consider $H = H_0 + V$, $V \in \mc S$ is given by (\ref{3.2}), and assume that $H$ has no eigenvalues or resonances in $(-\infty, -\mu] \cup [\mu, \infty)$. Then
\be\lb{strichartz_ind}
\|P_c Z\|_{L^{\infty}_t \dot H^{1/2}_x \cap L^2_t \dot W^{1/2, 6}_x} \leq C \big(\|Z(0)\|_{\dot H^{1/2}} + \|F\|_{L^1_t \dot H^{1/2}_x + L^2_t \dot W^{1/2, 6/5}_x}\big).
\ee
\end{theorem}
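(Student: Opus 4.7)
The plan is to combine the spectral representation of Lemma \ref{lemma_3.4} with the Keel--Tao endpoint machinery and Kato smoothing, working throughout at the half-derivative level. By Duhamel's formula and a $TT^*$-type reduction, it is enough to establish the homogeneous bound $\|P_c e^{it\mc H}f\|_{L^\infty_t \dot H^{1/2}_x \cap L^2_t \dot W^{1/2,6}_x} \leq C\|f\|_{\dot H^{1/2}}$ together with its dual; the full inhomogeneous estimate with the two distinct source norms $L^1_t \dot H^{1/2}_x + L^2_t \dot W^{1/2,6/5}_x$ then follows by a Christ--Kiselev truncation and bilinear duality.

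The first substantive step inserts the spectral formula of Lemma \ref{lemma_3.4} into the propagator, writing
\be
P_c e^{it\mc H} f = \frac{1}{2\pi i}\int_{(-\infty,-\mu]\cup[\mu,\infty)} e^{-it\lambda}\bigl(R_V(\lambda-i0)-R_V(\lambda+i0)\bigr)f \dd \lambda.
\ee
Factor $V=V_1V_2$ as in Lemma \ref{lemma_3.4} and apply the resolvent identity
\be
R_V(\lambda\pm i0) = R_0(\lambda\pm i0) - R_0(\lambda\pm i0)\,V_1\, S(\lambda\pm i0)\, V_2\, R_0(\lambda\pm i0),
\ee
where $S(\lambda\pm i0) = (I+V_2 R_0(\lambda\pm i0)V_1)^{-1}$. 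Under the spectral assumption, $S(\lambda\pm i0)$ is uniformly bounded on $L^2$: the Fredholm alternative rules out nontrivial kernel vectors since these would correspond to embedded eigenvalues or resonances of $\mc H$, which are excluded by hypothesis.

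On the free piece, inverse Fourier transformation in $\lambda$ returns $e^{it\mc H_0}$, and the desired half-derivative Strichartz estimate reduces to the endpoint Keel--Tao inequality \cite{tao} applied entrywise to the diagonal operator $\mc H_0$. For the perturbative piece, one exploits the fast decay and smoothness of $V_1$, $V_2$: the map $\lambda \mapsto V_2 R_0(\lambda\pm i0)$ acts boundedly from $\dot H^{1/2}_x$ into $L^2_\lambda L^2_x$ (Kato smoothing, using the $\langle x\rangle^{-N}$ weight built into $V_2$), and by duality $\lambda \mapsto R_0(\lambda\pm i0) V_1$ sends $L^2_\lambda L^2_x$ into the Strichartz space $L^2_t \dot W^{1/2,6}_x$ after inverse Fourier transform. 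Composing these two with the uniformly bounded $S(\lambda\pm i0)$ in the middle closes the perturbative contribution.

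The main technical obstacle is the Kato smoothing at the half-derivative level in the nonselfadjoint matrix setting: because $e^{it\mc H_0}$ is not a unitary group on $L^2$ (the two diagonal blocks carry opposite signs, so one component grows where the other decays), the usual $TT^*$ identification of smoothing norms with resolvent bounds cannot be invoked as a black box; it must instead be derived directly from the explicit kernel (\ref{eq_3.56}) together with the uniform resolvent estimate. A second subtlety is the behavior at the spectral edges $\pm\mu$, where the kernel of $R_0$ degenerates and the decay of the evolution is slowest; the assumed absence of thresholds resonances ensures that $S(\lambda\pm i0)$ extends continuously up to these edges and remains bounded there. Once both points are handled, the bilinear duality argument of Keel--Tao assembles the free and perturbative pieces into the stated estimate.
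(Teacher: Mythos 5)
Your overall strategy --- expand $R_V$ via the symmetric resolvent identity, treat the free part by Keel--Tao, and close the perturbative part by sandwiching the uniformly bounded $(I+V_2R_0V_1)^{-1}$ between a Kato-smoothing bound and its dual --- is the standard perturbative route in the spirit of \cite{rodsch}, and it is genuinely different from what the paper does. The paper never expands $R_V$ at the Strichartz level: it first shows that $R_V(\lambda)P_c$ is bounded from $\dot W^{1/2,6/5}$ to $\dot W^{1/2,6}$ uniformly over the whole complex plane (using the representation of Lemma \ref{lemma_3.4} together with the limiting absorption principle), then writes the retarded Duhamel operator as a Fourier multiplier in $t$ whose operator-valued symbol is this family evaluated at $\lambda-i0$, and concludes by Plancherel after shifting the contour to the real axis. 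The retardation is thereby encoded automatically in the choice of boundary value from the lower half-plane.

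This matters because the weakest point of your argument is precisely the retardation. You assert that the inhomogeneous estimate follows from the homogeneous one and its dual ``by a Christ--Kiselev truncation,'' but the component of the claimed estimate that pairs $L^2_t\dot W^{1/2,6/5}_x$ data with the $L^2_t\dot W^{1/2,6}_x$ norm of the solution has source and target time-exponents both equal to $2$, which is exactly where the Christ--Kiselev lemma fails. The retarded endpoint estimate does not follow formally from the homogeneous bounds; in \cite{tao} it is produced by the bilinear real-interpolation argument, which, applied to $e^{it\mc H}P_c$, would require the $L^2\to L^2$ and $L^1\to L^\infty$ dispersive bounds for the perturbed, nonselfadjoint evolution --- nontrivial inputs you neither prove nor cite, and which your perturbative scheme is designed to avoid needing. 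Within that scheme, the sharp cutoff $\chi_{s<t}$ survives into the composition of the outer free propagator with the smoothing pieces and requires a dedicated argument: the inner, Hilbert-space-valued composition $V_2e^{i(t-s)\mc H_0}V_1$ absorbs the cutoff by Plancherel in $t$, but the outer map into $L^2_tL^6_x$ is not Hilbert-space valued and does not. Separately, your identified ``main technical obstacle'' is not one: $\mc H_0$ is diagonal with blocks $\pm(\Delta-\mu)$, so $e^{it\mc H_0}$ is unitary on $L^2\oplus L^2$ componentwise, and the free Kato smoothing is the scalar one; non-selfadjointness enters only through the perturbed evolution, which your decomposition already isolates.
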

\begin{proof}
We start from (\ref{3.30}) and use the fact that
$$
\sup_{\lambda \in \R} \|R_V(\lambda-i0) P_c\|_{\mc B(\dot W^{1/2, 6/5}, \dot W^{1/2, 6})} < \infty.
$$
Let $F$, $G \in L^{\infty}_t (L^1_x \cap L^2_x)$ have compact support in $t$ and consider the forward time evolution
$$
(T F)(t) = \int_{t>s} e^{i(t-s) H} P_c F(s) \dd s. 
$$
$T F(t)$ is in $L^2_x$ for all $t$ and grows at most exponentially, so its Fourier transform is well-defined for $\Im \lambda < -\|V\|_{\infty}$ (where, in particular, $\|R_V(\lambda)\|_{2 \to 2}$ is bounded):
$$
\widehat {TF}(\lambda) = i R_V(\lambda) P_c \widehat F(\lambda).
$$
By the representation formula (\ref{3.16}), for $P_c$ given by Proposition \ref{hspectru} and $f$, $g \in H^{1/2}$
$$\begin{aligned}
\langle R_V(\lambda_0) P_c f, g \rangle &= \frac 1 {2 \pi i} \int_{(-\infty, -\mu] \cup [\mu, \infty)} \big\langle R_V(\lambda_0) (R_V(\lambda-i0) - R_V(\lambda+i0)) f, g \big\rangle \dd \lambda \\
&= \frac 1 {2 \pi i} \int_{(-\infty, -\mu] \cup [\mu, \infty)} \Big\langle \frac 1 {\lambda-\lambda_0} (R_V(\lambda-i0) - R_V(\lambda+i0)) f, g \Big\rangle \dd \lambda.
\end{aligned}$$
Here we used the resolvent identity $R_V(\lambda_1) - R_V(\lambda_2) = (\lambda_1-\lambda_2) R_V(\lambda_1) R_V(\lambda_2)$.

For some fixed $\lambda_1 \not \in \sigma(H_0)$, $R_V(\lambda_1)$ is bounded from $L^{6/5}$ to $L^{6}$. Then, for any $\lambda_2 \ne \lambda_1$, $\lambda_2 \not \in \sigma(H_0)$
$$\begin{aligned}
&\langle R_V(\lambda_1) P_c f, g \rangle - \langle R_V(\lambda_2) P_c f, g \rangle = \\
&= \frac 1 {2 \pi i} \int_{\sigma(H_0)} \Big\langle \frac {\lambda_2 - \lambda_1} {(\lambda-\lambda_1)(\lambda-\lambda_2)} (R_V(\lambda-i0) - R_V(\lambda+i0)) f, g \Big\rangle \dd \lambda.
\end{aligned}$$
By the limiting absorption principle,
$$
\sup_{\lambda \in (-\infty, -\mu] \cup [\mu, \infty)} \|R_V(\lambda \pm i0)\|_{\dot W^{1/2, 6/5} \to \dot W^{1/2,6}} < \infty.
$$
Since the integrand decays like $\lambda^{-2}$, it follows that
\be\lb{3.39}
\sup_{\lambda \in \C} \|R_V(\lambda) P_c\|_{\dot W^{1/2, 6/5} \to \dot W^{1/2, 6}} < \infty.
\ee



When $y > \|V\|_{\infty}$, $e^{-yt} (T F)(t) \in L^2_t \dot H^{1/2}_x$ and $e^{yt} G(t) \in L^2_t \dot H^{1/2}_x$. Taking the Fourier transform in $t$, by Plancherel's theorem
\be\begin{aligned}\lb{3.41}
\int_{\R} \langle (T F)(t),  G(t) \rangle \dd t &= \frac 1 {2\pi} \int_{\R} \big\langle \big(e^{-yt} (T F)(t) \big)^{\wedge}, \big(e^{yt} G(t)\big)^{\wedge} \big\rangle \dd \lambda \\
&= \frac 1 {2 \pi i} \int_{\R} \big\langle R_V(\lambda - iy) P_c \widehat F(\lambda-iy), \widehat {G(-t)}(\lambda-iy) \big \rangle \dd \lambda.
\end{aligned}\ee
Here $\langle \cdot, \cdot \rangle$ is the dot product, in the real Hilbert space sense.

The pairing makes sense because $R_V(\lambda) P_c \in \mc B(L^2, L^2)$ is analytic in $\lambda$ for $\Im \lambda <0$. By the resolvent identity, we express $R_V P_c$ in (\ref{3.41}) as
\be\lb{3.42}
R_V P_c = R_0 - R_0 (V-P_p H) R_0 + R_0 F_1 (V) \big(\tilde V_2 R_V P_c \tilde V_1\big) \tilde V_2 R_0.
\ee
The first term represents the free Schr\"{o}dinger evolution, which is bounded by the endpoint Strichartz estimates of \cite{tao}:
$$
\frac 1 {2 \pi i} \int_{\R} \big\langle R_0(\lambda - iy) \widehat F(\lambda-iy), \widehat {G(-t)}(\lambda-iy) \big \rangle \dd \lambda.
$$
For the same reason it is true that
\be\lb{3.44}\begin{aligned}
\big\|\tilde V_2 R_0(\lambda-i0) \widehat F(\lambda)\big\|_{L^2_{\lambda, x}} \leq C \|F\|_{L^2_t L^{6/5}_x},\\
\big\|\tilde V_1 R_0(\lambda-i0) \widehat G(\lambda)\big\|_{L^2_{\lambda, x}} \leq C \|F\|_{L^2_t L^{6/5}_x}.
\end{aligned}\ee
Since $F(t)$ and $G(t)$ have compact support in $t$, it follows that $\widehat F(\lambda)$ and $\widehat G(\lambda)$ are analytic. For every $y \in \R$, $\tilde V_2 R_0(\lambda+iy) \widehat F(\lambda+iy)$ and $\tilde V_1^* R_0(\lambda + iy) \widehat G(\lambda+iy)$ are in $L^2_{\lambda, x}$.
%
This allows shifting the integration line toward the real axis. We obtain
\be\lb{3.30}
\int_{\R} \langle T F(t),  G(t) \rangle \dd t = \frac 1 {2 \pi i} \int_{\R} \big\langle R_V(\lambda - i0) P_c \widehat F(\lambda), \widehat {G(-t)}(\lambda) \big \rangle \dd \lambda.
\ee
Following (\ref{3.42}) and (\ref{3.44}), this implies
\be\begin{aligned}
\int_{\R} \langle T F(t),  G(t) \rangle \dd t &\les \big(1+ \sup_{\lambda \in \R} \|R_V(\lambda-i0) P_c\|_{\mc B(\dot W^{1/2, 6/5}, \dot W^{1/2, 6})}\big) \|F\|_{L^2_{t} \dot W^{1/2, 6/5}_x} \|G\|_{L^2_{t} \dot W^{1/2, 6/5}_x} \\
&\les \|F\|_{L^2_t \dot W^{1/2, 6/5}_x} \|G\|_{L^2_t \dot W^{1/2, 6/5}_x}.
\end{aligned}\ee
By approximation, we then remove the assumption that $F$ and $G$ have compact support and replace $H^{1/2}$ by $\dot H^{1/2}$. This establishes the inhomogenous Strichartz estimate
\be
\Big\|\int_{t>s} e^{i(t-s) H} P_c F(s) \dd s \Big\|_{L^2_t \dot W^{1/2, 6}_x} \les \|F\|_{L^2_t \dot W^{1/2, 6/5}_x}.
\ee
By iterating and applying Duhamel's formula we obtain (\ref{strichartz_ind}).
\end{proof}

\subsection{Strichartz estimates with time-dependent potentials}\lb{sec3.7}
We prove endpoint $\dot H^{1/2}$ Stric\-hartz estimates for the specific time-dependent problem obtained by linearizing (\ref{NLS}) around a moving soliton. We follow the framework of \cite{bec3} and only address the specifics of the $\dot H^{1/2}$ case here.

Given parameters $A(t)$ and $v(t) = (v_1(t), v_2(t), v_3(t))$, consider the family of isometries
\be\lb{3.155}
U(t) =  e^{\textstyle\int_0^t (2v(s) \dl + i A(s) \sigma_3) \dd s}.
\ee
The rate of change of $U(t)$ is then controlled by $\|A(t)\|_{L^{\infty}_t} + \|v(t)\|_{L^{\infty}_t}$.

The time-dependent linearized Schr\"{o}dinger equation has the form
\be\lb{3.157}
i \partial_t R(t) + (H_0 + U(t)^{-1} V U(t)) R(t) = F(t),\ R(0) \text{ given}.
\ee
The Hamiltonian at time $t$ is precisely $U(t)^{-1} (H_0 + V) U(t)$, since $H$ and $U$ commute, i.e.\ it is $H$ conjugated by $U(t)$.

Let $Z(t) = U(t) R(t)$. We rewrite the equation in the variable $Z$, obtaining
\be\lb{3.158}
i \partial_t Z(t) - i \partial_t U(t) U(t)^{-1} Z(t) + H_0 Z(t) + V Z(t) = U(t) F(t),\ Z(0) = R(0).
\ee
In the next lemma we list all the properties of $U(t)$ that we use in the study of (\ref{3.157}) and~(\ref{3.158}).

\begin{lemma}\lb{lem_32} Let $U(t)$ be defined by \eqref{3.155}.
\begin{list}{\labelitemi}{\leftmargin=1em}
\item[1.] $U(t)$ is a strongly continuous family of $\dot W^{s, p}$ isometries, for $s \in \R$, $1 \leq p < \infty$.
\item[2.] For every $t, s \geq 0$, $U(t)$ and $U(s)$ commute with $H_0$ and each other.
\item[3.] There exist $N$, $\epsilon(N)>0$ such that for all $0 \leq \sigma \leq 1$
\be\lb{3.163}\begin{aligned}
\|\langle x \rangle^{-N} (U(t) U(s)^{-1} e^{i(t-s) H_0} - e^{i(t-s) H_0}) \chi_{t > s} \langle x \rangle^{-N}\|_{\mc B(L^2_{\tau} \dot H^\sigma_x, L^2_t \dot H^\sigma_x)} \leq \\
\leq C (\|A(t)\|_{L^{\infty}_t} + \|v(t)\|_{L^{\infty}_t})^{\epsilon(N)}.
\end{aligned}\ee
\end{list}
\end{lemma}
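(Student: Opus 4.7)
For parts (1) and (2), the key observation is that $\sigma_3$ acts only on the $\set C^2$-factor while $\nabla$ is a scalar spatial operator, so their generators commute pointwise. Solving the ODE defining $U(t)$ then yields the explicit factorization
\[ U(t) = T_{Y(t)}\,M_{B(t)},\qquad Y(t):=2\int_0^t v(s)\,ds,\quad B(t):=\int_0^t A(s)\,ds, \]
where $T_h f(x):=f(x+h)$ is translation and $M_b:=e^{ib\sigma_3}$ is a unit-modulus matrix multiplier. Each factor is an isometry on every $\dot W^{s,p}$ ($1\le p<\infty$) and is strongly continuous in $t$, yielding (1). Both factors commute with the Fourier multiplier $\mc H_0 = \sigma_3(\Delta-\mu)$ and with one another at any pair of times, yielding (2).

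For part (3), the commutation in (2) collapses the difference to $(U(t)U(\tau)^{-1}-I)\,e^{i(t-\tau)\mc H_0}$, and $U(t)U(\tau)^{-1} = T_{\Delta Y}M_{\Delta B}$ with $|\Delta Y|\le 2\|v\|_\infty|t-\tau|$ and $|\Delta B|\le\|A\|_\infty|t-\tau|$. Writing $T_{\Delta Y}M_{\Delta B}-I = T_{\Delta Y}(M_{\Delta B}-I)+(T_{\Delta Y}-I)$ and applying the elementary inequality $|e^{ia}-1|\le C|a|^\theta$ for $\theta\in[0,1]$ (on the Fourier side for the translation factor, where it produces a $|\xi|^\theta$) yields the pointwise estimate
\[ \|(U(t)U(\tau)^{-1}-I)g\|_{\dot H^s_x}\le C(\|v\|_\infty+\|A\|_\infty)^\theta |t-\tau|^\theta \|g\|_{\dot H^{s+\theta}_x}. \]

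Viewing the object under consideration as the integral operator with $(t,\tau)$-kernel $K(t,\tau)$ given by the expression inside the norm, its $\dot H^s_x\to\dot H^s_x$ operator norm is therefore controlled by $|t-\tau|^\theta(\|v\|_\infty+\|A\|_\infty)^\theta \|\langle x\rangle^{-N}e^{i(t-\tau)\mc H_0}\langle x\rangle^{-N}\|_{\dot H^{s+\theta}_x\to\dot H^s_x}$. The remaining task is to produce a convolution bound in $L^1_\sigma(\set R)$ of this quantity, from which Schur's test applied to $K(t,\tau)$ delivers the stated operator-norm estimate. For $|\sigma|\ge 1$, the explicit kernels of the two diagonal entries of $e^{i\sigma\mc H_0}$---a free Schr\"odinger kernel of amplitude $|\sigma|^{-3/2}$ and a smoothing kernel from the mass term---combined with iterated integration by parts in the phase $|x-y|^2/(4\sigma)$ against the polynomial weights, yield decay $\langle\sigma\rangle^{-M}$ for any prescribed $M$ once $N$ is large enough. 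For $|\sigma|<1$, interpolation with the trivial no-gain bounds $\|(U(t)U(\tau)^{-1}-I)\|_{\dot H^s\to\dot H^s}\le 2$ and $\|\langle x\rangle^{-N}e^{i\sigma\mc H_0}\langle x\rangle^{-N}\|_{\dot H^s\to\dot H^s}\le C$ sacrifices a portion of the power $\theta$ in return for removing the singularity at $\sigma=0$.

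\textbf{Main obstacle.} The delicate point is that the translation difference $T_{\Delta Y}-I$ costs $\theta$ spatial derivatives while the weighted $\dot H^{s+\theta}\to\dot H^s$ smoothing of $e^{i\sigma\mc H_0}$ has a non-integrable singularity of order $|\sigma|^{-3/2-\theta}$ near $\sigma=0$, against which the gain $|\sigma|^\theta$ from the translation only partially compensates. Resolving this by interpolating with the trivial $\dot H^s$ bound is what forces $\epsilon(N)$ to be a fractional power less than one, its size increasing with $N$ as more weighted dispersive decay becomes available.
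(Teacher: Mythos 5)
Your treatment of (1) and (2) via the factorization $U(t)=T_{Y(t)}M_{B(t)}$ matches what the paper does implicitly. For (3) your route is genuinely different from the paper's: you pay $\theta$ spatial derivatives for the translation on the Fourier side, in exchange for the factor $(\|v\|_\infty|t-\tau|)^\theta$, and then try to recover those derivatives from the weighted free propagator. The paper never spends derivatives: it writes out the explicit kernel of $e^{-i(t-s)\Delta}e^{(\int_s^t 2v)\cdot\dl}$, in which the translation appears only through the extra phase $(x-y)\cdot(d(t)-d(s))/(t-s)$ with $d(t)=\int_0^t v$; since $|d(t)-d(s)|/|t-s|\le\|v\|_\infty$ \emph{uniformly in} $t-s$, the deviation from the free kernel is bounded pointwise by $\min(1,\|v\|_\infty(|x|+|y|))\le\|v\|_\infty^{\epsilon}(|x|+|y|)^{\epsilon}$, and the growth $(|x|+|y|)^{\epsilon}$ is absorbed by the weights $\langle x\rangle^{-N}$, $\langle y\rangle^{-N}$ rather than by derivatives. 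This makes the near-diagonal regime and the passage to $\dot H^s$ (one derivative distributed onto the weights, then interpolation with $L^2$) essentially automatic.

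Within your scheme there is one concretely false step and one unproved key input. The false step: for $|t-\tau|\ge 1$, the weighted propagator $\langle x\rangle^{-N}e^{i\sigma\mc H_0}\langle x\rangle^{-N}$ does \emph{not} decay like $\langle\sigma\rangle^{-M}$ for arbitrary $M$. The gradient of the phase $|x-y|^2/(4\sigma)$ is $(x-y)/(2\sigma)$, which tends to zero on the support of the weights as $|\sigma|\to\infty$, so integration by parts gains nothing; the true decay is exactly $|\sigma|^{-3/2}$ (e.g.\ by a Hilbert--Schmidt computation), and no choice of $N$ improves it. This happens to be harmless because $|\sigma|^{-3/2}$ is integrable at infinity, but the justification must be replaced by the standard dispersive/Hilbert--Schmidt bound. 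The unproved input: your argument hinges on the quantitative estimate $\|\langle x\rangle^{-N}e^{i\sigma\mc H_0}\langle x\rangle^{-N}\|_{\dot H^s\to\dot H^{s+\theta}}\lesssim |\sigma|^{-3/2-\theta}$ (note you wrote the mapping in the direction $\dot H^{s+\theta}\to\dot H^s$, which is the losing direction and makes the composition with the derivative-costing factor meaningless; the gaining direction is what you need). That bound is true and elementary for integer $\theta$ via differentiating the explicit kernel, with fractional $\theta$ and $s$ obtained by interpolation, but it has to be stated and proved — as does the commutation of $\langle x\rangle^{-N}$ past $U(t)U(\tau)^{-1}-I$, since translations do not commute with the weight. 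With those repairs your interpolation near the diagonal does retain a reduced power $(\|A\|_\infty+\|v\|_\infty)^{\theta\beta}$, $\beta<2/3$, and Schur's test in $(t,\tau)$ closes the argument; but as written the proof does not stand on its own.
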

For a parallel statement and its proof, also see Lemma 2.16 of \cite{bec3}.
\begin{proof} The first two properties are easy to check directly. As for the third, we compare $T(t, s) = e^{i(t-s) H_0} \chi_{t>s}$
and $\tilde T(t, s) = e^{i(t-s) H_0} e^{\int_s^t (2v(\tau) \dl + i A(\tau) \sigma_3) \dd \tau} \chi_{t>s}$.

Due to the pointwise decay of the kernel,
\be\lb{3.167}
\|\tilde T(t, s) - T(t, s)\|_{1 \to \infty} \les |t-s|^{-3/2}.
\ee
On the other hand, $\|\tilde T(t, s) - T(t, s)\|_{2 \to 2} \leq C$. It follows that for $N>1$
$$
\|\langle x \rangle^{-N} (T - \tilde T) \langle x \rangle^{-N}\|_{\mc B(L^2_{t, x}, L^2_{t, x})} < C,
$$
with a constant independent of $A$ and $v$.

Assume $\|A\|_{\infty}$, $\|v\|_{\infty} \leq 1$. Consider first the case where $v(t) \equiv 0$; hence let
\be\begin{aligned}
\tilde T_{osc}(t, s) &:= e^{i(t-s) H_0} e^{\int_s^t i A(\tau) \sigma_3 \dd \tau} \chi_{t>s}.
\end{aligned}\ee
Since $|e^{ia} -1| \leq \min(1, a)$,
\be\lb{3.172}
\|\tilde T_{osc}(t, s) - T(t, s)\|_{2 \to 2} \les \min(1, \|A\|_{\infty} |t-s|).
\ee
For sufficiently large $N$, (\ref{3.167}) and (\ref{3.172}) imply that
$$\begin{aligned}
&\|\langle x \rangle^{-N} (\tilde T_{osc} - T) \langle x \rangle^{-N}\|_{\mc B(L^2_{t, x}, L^2_{t, x})} \les \\
&\les \int_{t-\|A\|_{\infty}^{-2/5}}^t \|A\|_{\infty} |t-s| \dd s + \int_{-\infty}^{t-\|A\|_{\infty}^{-2/5}} |t-s|^{-3/2} \dd s \les \|A\|_{\infty}^{1/5}.
\end{aligned}$$
Next, consider the case when $v(t) \not \equiv 0$. Let $d(t) = \int_0^t v(\tau) \dd \tau$. Then
\be\begin{aligned}
&e^{-i(t-s) \Delta} e^{\int_s^t 2v(\tau)\dl \dd \tau} = \\
&\textstyle= \frac 1 {(-4\pi i)^{3/2}}(t-s)^{-3/2} e^{i\big(\frac{|x-y|^2}{4(t-s)} - \frac{(x-y) (d(t) - d(s))}{t-s} + \frac {(d(t)-d(s))^2}{t-s}\big)}.
\end{aligned}\ee
We treat $e^{i \frac {(d(t)-d(s))^2}{t-s}}$ as above. Consider the kernel
$$\begin{aligned}
\tilde T_1(t, s) &:= e^{i(t-s) H_0} e^{\int_s^t iA(\tau) \sigma_3 \dd \tau} e^{i \frac {(d(t)-d(s))^2}{t-s}} \chi_{t>s}.
\end{aligned}$$
Since $\frac {(d(t)-d(s))^2}{t-s} \leq \|v\|_{\infty} |t-s|$ and $|e^{ia}-1| \leq \min(1, a)$,
$$\begin{aligned}
&\|\langle x \rangle^{-N} (\tilde T_1 - \tilde T_{osc}) \langle x \rangle^{-N}\|_{\mc B(L^2_{t, x}, L^2_{t, x})} \les \\
&\les \int_{t-\|v\|_{\infty}^{-2/5}}^t \|v\|_{\infty} |t-s| \dd s + \int_{-\infty}^{t-\|v\|_{\infty}^{-2/5}} |t-s|^{-3/2} \dd s \les \|v\|_{\infty}^{1/5}.
\end{aligned}$$
Then
\be\begin{aligned}\lb{T1}
&\|\langle x \rangle^{-N} (\tilde T_1 - T) \langle x \rangle^{-N}\|_{\mc B(L^2_{t, x}, L^2_{t, x})} \les \\
&\les \|\langle x \rangle^{-N} (\tilde T_{osc} - T) \langle x \rangle^{-N}\|_{\mc B(L^2_{t, x}, L^2_{t, x})} + \|\langle x \rangle^{-N} (\tilde T_1 - \tilde T_{osc}) \langle x \rangle^{-N}\|_{\mc B(L^2_{t, x}, L^2_{t, x})}\\
&\les \|A\|_{\infty}^{1/5} + \|v\|_{\infty}^{1/5}.
\end{aligned}\ee
Considering that $
\big|e^{i \frac{(x-y) (d(t) - d(s))}{t-s}} -1\big| \les \min(1, \|v\|_{\infty} (|x| + |y|))$,
it follows that for sufficiently large $N$
$$\begin{aligned}
\|\langle x \rangle^{-N} (\tilde T(t, s) - \tilde T_1(t, s)) \langle x \rangle^{-N}\|_{2 \to 2} \les \|v\|_{\infty} |t-s|^{-3/2}.
\end{aligned}$$
Also note that $\|\tilde T(t, s) - \tilde T_1(t, s)\|_{2 \to 2}\leq C$. Therefore
\be\begin{aligned}\lb{TT}
&\|\langle x \rangle^{-N} (\tilde T_1 - \tilde T) \langle x \rangle^{-N}\|_{\mc B(L^2_{t, x}, L^2_{t, x})} \les \\
&\les \int_{t-\|v\|_{\infty}^{2/5}}^t |t-s| \dd s + \int_{-\infty}^{t-\|v\|_{\infty}^{2/5}} \|v\|_{\infty} |t-s|^{-3/2} \dd s \les \|v\|_{\infty}^{4/5}.
\end{aligned}\ee
By (\ref{T1}) and (\ref{TT}), for $\|v\|_{\infty} \leq 1$
\be\begin{aligned}\lb{L2}
&\|\langle x \rangle^{-N} (\tilde T - T) \langle x \rangle^{-N}\|_{\mc B(L^2_{t, x}, L^2_{t, x})} \les \\
&\les \|\langle x \rangle^{-N} (\tilde T - \tilde T_1) \langle x \rangle^{-N}\|_{\mc B(L^2_{t, x}, L^2_{t, x})} + \|\langle x \rangle^{-N} (\tilde T_1 - T) \langle x \rangle^{-N}\|_{\mc B(L^2_{t, x}, L^2_{t, x})}\\
&\les \|A\|_{\infty}^{1/5} + \|v\|_{\infty}^{1/5}.
\end{aligned}\ee
Also note that
$$\begin{aligned}
\|\langle x \rangle^{-N} (\tilde T - T) \langle x \rangle^{-N} F\|_{L^2_t \dot H^1_x} &\les \|\langle x \rangle^{-N-1} (\tilde T - T) \langle x \rangle^{-N} F\|_{L^2_{t, x}} + \\
&+ \|\langle x \rangle^{-N} (\tilde T - T) \langle x \rangle^{-N-1} F\|_{L^2_{t, x}} + \\
&+ \|\langle x \rangle^{-N} (\tilde T - T) \langle x \rangle^{-N} \dl F\|_{L^2_{t, x}} \\
&\les \big(\|A\|_{\infty}^{\epsilon/3} + \|v\|_{\infty}^{\epsilon/3}\big) \|F\|_{L^2_t \dot H^1_x},
\end{aligned}$$
by the Sobolev embedding $\dot H^1 \subset L^{6, 2} \subset \langle x \rangle L^2$. Thus, for sufficiently large $N$,
\be\lb{3.64}
\|\langle x \rangle^{-N} (\tilde T - T) \langle x \rangle^{-N} F\|_{L^2_t \dot H^1_x} \les (\|A\|_{\infty}^{1/5} + \|v\|_{\infty}^{1/5}) \|F\|_{L^2_t \dot H^1_x}.
\ee
Interpolating between (\ref{3.64}) and (\ref{L2}), we obtain (\ref{3.163}).
\end{proof}

\begin{theorem}\lb{theorem_13}
Consider equation \eqref{3.157}, with $H = H_0 + V$ given by \eqref{3.2} and $V \in \mc S$:
$$\begin{aligned}
i \partial_t Z - i v(t) \dl Z + A(t) \sigma_3 Z + H Z = F,\ Z(0) \text{ given},
\end{aligned}$$
$$\begin{aligned}
H = \bpm \Delta - \mu & 0 \\ 0 & -\Delta + \mu \epm + \bpm W_1 & W_2 \\ -W_2 & W_1 \epm.
\end{aligned}$$
Assume that $\|A\|_{\infty}$ and $\|v\|_{\infty}$ are sufficiently small in a manner that depends on $\|V\|_{\mc S_n}$, for sufficiently large $n$, and that there are no eigenvalues or resonances of $H$ in $(-\infty, -\mu] \cup [\mu, \infty)$. Then
\be
\|P_c Z\|_{L^{\infty}_t \dot H^{1/2}_x \cap L^2_t \dot W^{1/2, 6}_x} \les \|Z(0)\|_{\dot H^{1/2}} + \|F\|_{L^1_t \dot H^{1/2}_x + L^2_t \dot W^{1/2, 6/5}_x}.
\ee
\end{theorem}

We also use exponentially weighted Strichartz inequalities.
\begin{corollary}\lb{cor_exp}
Consider equation \eqref{3.157}, with $H = H_0 + V$ given by \eqref{3.2} and $V \in \mc S$. Assume that $\|A\|_{\infty}$ and $\|v\|_{\infty}$ are sufficiently small in a manner that depends on $\|V\|_{\mc S_n}$ and that there are no eigenvalues or resonances of $H$ in $(-\infty, -\mu] \cup [\mu, \infty)$. Then
$$
\|P_c Z\|_{e^{t\rho} L^{\infty}_t \dot H^{1/2}_x \cap e^{t\rho} L^2_t \dot W^{1/2, 6}_x} \les \|Z(0)\|_{\dot H^{1/2}} + \|F\|_{e^{t\rho} L^1_t \dot H^{1/2}_x + e^{t\rho} L^2_t \dot W^{1/2, 6/5}_x}.
$$
Furthermore,
$$
\|P_c Z\|_{e^{t\rho} L^{\infty}_t \dot H^{1/2}_x} \les \langle \rho \rangle^{-1} \|P_c Z\|_{e^{t\rho} L^{\infty}_t \dot H^{1/2}_x}.
$$
\end{corollary}
\begin{proof} The first inequality simply corresponds to using the same proof for the kernels
$$
T(t, s) = e^{i(t-s)H_0-\rho(t-s)} \chi_{t>s},\ \tilde T(t, s) = e^{i(t-s)H_0-\rho(t-s)} e^{\int_s^t(2v(\tau)\dl+iA(\tau)\sigma_3)\dd \tau} \chi_{t>s}.
$$
The result is then entirely analogous.

The second conclusion is proved by Minkowski's inequality.
\end{proof}

\begin{proof}[Proof of Theorem \ref{theorem_13}] To begin with, by Lemma \ref{lemma_24}, $V \in \mc S$
guarantees the existence of a solution $R$, albeit one that may grow exponentially.

As in the time-independent case, let
\be\begin{aligned}
&\tilde Z = P_c Z,\ \tilde F = P_c F - 2iv(t)[P_c, \dl] \tilde Z + A(t) [P_c, \sigma_3] \tilde Z.
\end{aligned}\ee
The equation becomes
$$\begin{aligned}
&i \partial_t \tilde Z - i v(t) \dl \tilde Z + A(t) \sigma_3 \tilde Z + H \tilde Z = \tilde F,\ \tilde Z(0) = P_c R(0) \text{ given}.
\end{aligned}$$
The commutation terms
$$
2iv(t)[P_c, \dl] \tilde Z,\ A(t) [P_c, \sigma_3] \tilde Z
$$
are small in the dual Strichartz norm for small $\|v\|_{\infty}$ and $\|A\|_{\infty}$ and thus can be controlled by a fixed point argument in the endpoint Strichartz norm.

Lemma \ref{lemma32} provides the decomposition $V - P_p {H} = \tilde V_1 \tilde V_2$, where $\tilde V_1$ and $\tilde V_2^*$ are bounded from $\dot H^{1/2}$ to $\langle x \rangle^{-N} \dot H^{1/2}$. Denote, for $U$ given by (\ref{3.155}),
\be\lb{3.144}\begin{aligned}
\tilde T_{\tilde V_2, \tilde V_1} F(t) &= \int_{-\infty}^t \tilde V_2 P_c e^{i(t-s){H}_0} U(t) U(s)^{-1} \tilde V_1 F(s) \dd s, 
\end{aligned}\ee
respectively
$$\begin{aligned}
\tilde T_{\tilde V_2, I} F(t) &= \int_{-\infty}^t \tilde V_2 P_c e^{i(t-s){H}_0} U(t) U(s)^{-1} F(s) \dd s.
\end{aligned}$$
By Duhamel's formula,
\be\lb{3.186}\begin{aligned}
\tilde V_2 \tilde Z(t) &= i \tilde T_{\tilde V_2, \tilde V_1} \tilde V_2 \tilde Z(t) + \tilde T_{\tilde V_2, I} (-i\tilde F(s) + \delta_{s=0} \tilde Z(0)).
\end{aligned}\ee
We compare the time-dependent kernel $\tilde T_{\tilde V_2, \tilde V_1}$ with the time-independent one
\be\begin{aligned}
T_{\tilde V_2, \tilde V_1} F(t) &= \int_{-\infty}^t \tilde V_2 P_c e^{i(t-s){H}_0} \tilde V_1 F(s) \dd s. 
\end{aligned}\ee
By Lemma \ref{lem_32} we obtain that
\be
\lim_{\substack{\|A\|_{\infty} \to 0 \\ \|v\|_{\infty} \to 0}} \|T_{\tilde V_2, \tilde V_1} - \tilde T_{\tilde V_2, \tilde V_1}\|_{\mc B(L^2_t \dot H^{1/2}_x, L^2_t \dot H^{1/2}_x)} = 0.
\ee
The operator $I - i T_{\tilde V_1, \tilde V_2}$ is invertible in $\mc B(L^2_t \dot H^{1/2}_x, L^2_t \dot H^{1/2}_x)$. Indeed, its inverse is
\be\begin{aligned}
(I - i T_{\tilde V_1, \tilde V_2})^{-1} F(t) &= F(t) - i\int_{-\infty}^t \tilde V_2 P_c e^{i(t-s){H}} \tilde V_1 F(s) \dd s.
\end{aligned}\ee
The right-hand side belongs to $\mc B(L^2_{t, x}, L^2_{t, x})$ due to Theorem \ref{theorem_26} and the Duhamel formula proves that the right-hand side is the inverse of the left-hand side, as claimed.

Hence, when $\|A\|_{\infty}$ and $\|v\|_{\infty}$ are small enough, $I - i \tilde T_{\tilde V_1, \tilde V_2}$ is also invertible in $\mc B(L^2_{t, x}, L^2_{t, x})$. 
Once we invert $\tilde T_{\tilde V_2, \tilde V_1}$ in $\mc B(L^2_t \dot H^{1/2}_x, L^2_t \dot H^{1/2}_x)$, the proof of $\dot H^{1/2}$ endpoint Strichartz estimates proceeds by
\be\lb{3.87}
\tilde Z = \big(\tilde T_{I, I} + \tilde T_{\tilde V_1, I} (I - i T_{\tilde V_1, \tilde V_2})^{-1} \tilde T_{I, \tilde V_2}\big) (\delta_{t=0} \tilde Z(0) -i \tilde F).
\ee

\end{proof}








\end{document}